\documentclass[12pt]{article}
\usepackage{amssymb,amsthm,amsmath,pb-diagram,color,enumerate}
\usepackage{eucal,mathrsfs}
\usepackage[all]{xy}
\usepackage{url}
\usepackage{fullpage}
\usepackage[OT2,T1]{fontenc}
\DeclareSymbolFont{cyrletters}{OT2}{wncyr}{m}{n}
\DeclareMathSymbol{\Sha}{\mathalpha}{cyrletters}{"58}

\newtheorem{thm}[equation]{Theorem}
\newtheorem{lemma}[equation]{Lemma}
\newtheorem{lem}[equation]{Lemma}
\newtheorem{prop}[equation]{Proposition}
\newtheorem{cor}[equation]{Corollary}
\theoremstyle{definition}
\newtheorem{remark}[equation]{Remark}
\newtheorem{rem}[equation]{Remark}

\newtheorem{defn}[equation]{Definition}
\newtheorem{example}[equation]{Example}
\newtheorem{ex}[equation]{Example}


\newcommand{\mbb}[1]{\mathbb #1}

\newcommand{\mc}[1]{\mathcal #1}
\newcommand{\ms}[1]{\mathscr #1}

\newcommand{\oper}[1]{\operatorname{#1}}
\newcommand{\wh}{\widehat}
\newcommand{\til}{\widetilde}
\newcommand{\GL}{\oper{GL}}
\newcommand{\PGL}{\oper{PGL}}
\newcommand{\F}{\mc F}

\newcommand{\Spec}{\oper{Spec}}

\newcommand{\Gal}{\oper{Gal}}
\newcommand{\Br}{\oper{Br}}
\newcommand{\W}{\oper{W}}
\newcommand{\per}{\oper{per}}
\newcommand{\ind}{\oper{ind}}

\newcommand{\wi}{\oper{i_W}}
\newcommand{\iso}{\to^{\!\!\!\!\!\!\!\sim\,}}

\newcommand{\lcm}{\oper{lcm}}
\newcommand{\cha}{\oper{char}}

\newcommand{\tors}{\oper{Tors}}

\newcommand{\Frac}{\oper{frac}}

\newcommand{\id}{\oper{id}}
\renewcommand{\O}{\oper{O}}
\newcommand{\SO}{\oper{SO}}
\newcommand{\SB}{\oper{SB}}
\newcommand{\op}{\oper{op}}

\newcommand{\pr}{\oper{pr}}
\def\<{\left<}
\def\>{\right>}

\newenvironment{compactenum}{
	\vspace{-.2cm}
	\begin{enumerate}[\ \ 1.]
	\setlength{\itemsep}{0cm}
	\setlength{\parskip}{0cm}

}{
	\end{enumerate}
	\vspace{-.2cm}
}

\newenvironment{shortenum}{
	\vspace{-.2cm}
	\begin{enumerate}[\ \ 1.]
	\setlength{\itemsep}{0.1cm}
	\setlength{\parskip}{0.1cm}

}{
	\end{enumerate}
	\vspace{-.2cm}
}

\newenvironment{compactitem}{
	\vspace{-.2cm}
	\begin{itemize}
	\setlength{\itemsep}{0cm}
	\setlength{\parskip}{0cm}
}{
	\end{itemize}
}

\newcounter{itemcounter}
\newenvironment{prooflist}{

	\mbox{}

	\setcounter{itemcounter}{1}
	\begin{list}{%
		\textit{Part \arabic{itemcounter}: }
	}{
		\usecounter{itemcounter}
		\setlength{\leftmargin}{0cm}
		\setlength{\labelsep}{0cm}
	}
}{
	\end{list}
}

\title{Refinements to patching \\ and applications to field invariants}
\author{David Harbater, Julia Hartmann, and Daniel Krashen}
\date{}
\numberwithin{equation}{section}
\begin{document}
\maketitle

\begin{abstract}
We introduce a notion of refinements in the context of patching, in order to obtain new results about 
local-global principles and field invariants in the context of quadratic forms
and central simple algebras.  The fields we consider are finite extensions of the
fraction fields of two-dimensional complete domains that need not be local.  Our results in particular give the $u$-invariant and period-index bound for these fields, as consequences of more general abstract results.
\end{abstract}

\section{Introduction} \label{introduction}

In this manuscript we introduce the notion of refinements in the context of patching, and use this 
to obtain results about quadratic forms and central simple algebras over 
fraction fields of two-dimensional complete domains.  
These provide strengthenings and analogs of results in earlier papers.
Among our results here are local-global principles, which in the case of quadratic forms 
concern isotropy, the Witt group, the Witt index, and the $u$-invariant.  
In the case 
of central simple algebras they concern Brauer equivalence and the index.  
In addition, we obtain explicit results about the values of the $u$-invariant
and the period-index bounds for these fraction fields.

Classically, one relates the $u$-invariant and period-index bound for complete discretely valued fields to those of their residue fields.  Here, we consider the analogous situation of fraction fields of two-dimensional complete domains, which need not be local.  We focus on 
these two situations:
\begin{compactenum}
\renewcommand{\theenumi}{(\roman{enumi})}
\renewcommand{\labelenumi}{(\roman{enumi})}
\item 
the fraction field of a two-dimensional Noetherian complete local domain $R$ (e.g.~$k((x,t))$); 
\item 
a finite separable extension of the fraction field of the $t$-adic completion of $T[x]$, where $T$ is a complete discrete valuation ring with uniformizer $t$.
\end{compactenum}

In the context of central simple algebras, we obtain the following result.  (Our use of the term ``Brauer dimension'' is explained before Theorem~\ref{per-ind combination thm}.)

\begin{thm} \label{intro combined thm csa} 
In the above two situations, assume that the residue field $k$ of $R$ (resp.\ $T$) has 
Brauer dimension $d$ away from $p:=\cha(k)$.  Then $\ind(\alpha)$ divides $\per(\alpha)^{d+1}$
for all $\alpha \in \Br(E)$ whose period is not divisible by $p$.
\end{thm}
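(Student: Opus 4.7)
The plan is to reduce the global bound on $\ind(\alpha)$ over $E$ to local index bounds on a finite family of patch fields, using the local-global principle for the index of central simple algebras established in the preceding sections via the refined patching framework.

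First, I would choose a regular projective model $X$ of $\Spec(E)$ over the appropriate base (either $\Spec(R)$ in situation (i), or the two-dimensional base in situation (ii)), with reduced special fiber a simple normal crossings divisor that contains the ramification locus of $\alpha$ (this uses that $\per(\alpha)$ is prime to $p$, so the ramification is tame and can be put in normal crossings position after blowing up). To each irreducible component $U$ of the special fiber I would associate a patch field $F_U$ (the fraction field of the completion of $\mc{O}_X$ along $U$); to each closed intersection point $P$ of components I would associate a patch field $F_P$ (the fraction field of $\wh{\mc{O}}_{X,P}$).

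The second step is to invoke the refined local-global principle for indices, reducing the global bound to showing, for each such patch field $F$, that $\ind(\alpha|_F)$ divides $\per(\alpha)^{d+1}$. For $F_P$, the field is the fraction field of a two-dimensional complete regular local ring with residue field $k(P)$ a finite extension of $k$, hence of Brauer dimension at most $d$ away from $p$ by hypothesis; iterating the complete DVR formula, and using that (after arranging the model) the ramification of $\alpha$ at $P$ is concentrated on a single branch through $P$, should yield the desired bound. For $F_U$, the field is a complete DVR with residue field $\kappa(U)$ a one-variable function field over a finite extension of $k$ (hence of Brauer dimension at most $d+1$ away from $p$); by arranging that $\alpha|_{F_U}$ becomes unramified after a prime-to-$p$ extension of degree dividing $\per(\alpha)$, the DVR formula again yields the desired bound.

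The hard part will be justifying the saving of one factor of $\per(\alpha)$ over the naive iterated-DVR estimate $\per(\alpha)^{d+2}$ on each patch. This is precisely where the paper's refined patching is essential: by allowing finer choices of model and of the patching cover, it lets one control the ramification of $\alpha$ so that each local DVR contribution adds only a single dimensional step (rather than two) to the bound on the index. Assembling the local bounds via the local-global principle then yields $\ind(\alpha) \mid \per(\alpha)^{d+1}$, as claimed.
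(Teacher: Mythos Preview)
Your overall architecture matches the paper's: realize $E$ as a finite separable extension of some $F_\xi$ (via Cohen's structure theorem in case~(i), directly in case~(ii)), blow up so that the ramification of $\alpha$ sits in a normal crossings divisor, invoke the local-global principle $\ind(\alpha)=\lcm_\xi \ind(\alpha_\xi)$ over the patches, and bound each $\ind(\alpha_\xi)$ by $\per(\alpha)^{d+1}$. So the skeleton is right.

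However, you have misdiagnosed both where the refinement principle enters and how the local bounds are obtained. The ``saving of one factor'' at the patches is \emph{not} a consequence of refined patching; it is purely algebraic. At a closed point $Q$ (a two-dimensional complete regular local ring with residue field $\kappa$ finite over $k$), one does \emph{not} arrange that $\alpha$ is ramified along a single branch---that cannot be done in general. Instead the paper uses Saltman's decomposition: a class $\beta$ of period $m$ ramified only along a regular sequence splits as $\beta_0+\gamma_1+\gamma_2$ with $\beta_0$ unramified (so its index is controlled by $\Br(\kappa)$, where $\ind\mid\per^{\,d-1}$ by the Brauer-dimension hypothesis) and $\gamma_1,\gamma_2$ cyclic of degree $m$. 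This gives $\ind(\beta)\mid m^{d-1}\cdot m\cdot m=m^{d+1}$ directly. At an open set $U$ (where $\wh R_U/t\wh R_U$ is the ring of a regular affine curve over $k$), the argument is similar: split off a single cyclic algebra carrying the ramification along $(t)$, leaving an Azumaya class whose index is bounded via the hypothesis on one-variable function fields over $k$.

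The genuine role of the refinement principle is elsewhere: $E$ is not the function field of a projective $T$-curve, so the patching results of \cite{HH:FP}, \cite{HHK} do not apply to it directly. The refinement principle is what lets the paper establish patching (and hence the index local-global principle) on a blow-up of $\Spec(S)$, where $S$ is the integral closure of $\wh R_\xi$ in $E$; see Corollary~\ref{extension blowup both cases}. Without this, you have no local-global principle over $E$ at all, regardless of how sharp your local bounds are. Your proposal treats the refinement as a tool for improving local estimates, when in fact it is the tool that makes the global-to-local reduction available in the first place.
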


See Theorem~\ref{combined thm csa}, which also treats Brauer classes $\alpha \in \Br(E)$ of {\em arbitrary} period in the mixed characteristic case.  Using that, we obtain 
a local analog of \cite[Theorem~1]{PS:PIu}:

\begin{cor} \label{intro cor csa}
Let $L$ be the fraction field of $\mbb Z_p[[x]]$ or of the $p$-adic completion of  
$\mbb Z_p[x]$, and let $E$ be a finite extension of $L$.
Then $\ind(\alpha)$ divides $\per(\alpha)^2$ for all $\alpha \in \Br(E)$.
\end{cor}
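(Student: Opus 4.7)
The plan is to deduce the corollary from Theorem~\ref{intro combined thm csa} and its refinement Theorem~\ref{combined thm csa}, once both candidates for $L$ have been placed into the framework of situations (i) and (ii) and the Brauer dimension of their common residue field has been evaluated.

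First I would verify that both $L$'s, and all finite extensions $E/L$, sit in situation (i) or (ii). The ring $\mbb Z_p[[x]]$ is a two-dimensional Noetherian complete local domain with residue field $\mbb F_p$, so its fraction field fits (i). For a finite extension $E/L$ with $L=\Frac(\mbb Z_p[[x]])$ (automatically separable, since characteristic is zero), the integral closure $S$ of $\mbb Z_p[[x]]$ in $E$ is a finite module over the excellent---hence Japanese---ring $\mbb Z_p[[x]]$; so $S$ is a two-dimensional complete semilocal Noetherian domain, and being a domain and complete semilocal it must be local. Thus $E=\Frac(S)$ again fits (i). The second candidate for $L$ is by construction the fraction field of the $t$-adic completion of $T[x]$ with $T=\mbb Z_p$ and uniformizer $t=p$, and finite (separable) extensions of this $L$ are exactly what situation (ii) covers.

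Next I would observe that the residue field $\mbb F_p$ has very small Brauer dimension: finite fields have trivial Brauer group by Wedderburn, and function fields of curves over finite fields satisfy $\per=\ind$ away from the residue characteristic by global class field theory. Under the paper's conventions this forces the value $d$ for $\mbb F_p$ away from $p$ to satisfy $d\le 1$, and hence Theorem~\ref{intro combined thm csa} gives $\ind(\alpha)\mid\per(\alpha)^{d+1}\mid\per(\alpha)^2$ for every $\alpha\in\Br(E)$ whose period is prime to $p$. For $p$-primary $\alpha$ I would invoke Theorem~\ref{combined thm csa} in its mixed-characteristic form---which, as announced in the text, treats Brauer classes of arbitrary period---to obtain the same exponent. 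Decomposing a general $\alpha$ into its $p$-primary and prime-to-$p$ components and multiplying the resulting coprime index bounds yields $\ind(\alpha)\mid\per(\alpha)^2$.

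The main obstacle is not this bookkeeping but the black-boxed content of Theorem~\ref{combined thm csa}: one must trust that its mixed-characteristic refinement really does supply an $\ind\mid\per^2$ bound for $p$-primary Brauer classes, precisely where the naive residue-field reduction breaks because wildly ramified $p$-power cyclic algebras can appear over $E$. Once that theorem is in hand, everything else in the argument is formal.
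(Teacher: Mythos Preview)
Your proposal is correct and follows the paper's own route: place both choices of $L$ (and their finite extensions, which are automatically separable in characteristic zero) into cases~(i)/(ii) of Theorem~\ref{combined thm csa}, observe that the residue field is finite so that $d=1$ by Wedderburn together with class field theory for global function fields, and read off the bound. The one thing you leave as a ``black box'' is in fact explicit in Theorem~\ref{combined thm csa}: the mixed-characteristic bound is $\ind(\alpha)\mid\per(\alpha)^{\max(d+1,\,2n+2)}$, where $n$ is the $p$-rank of the residue field; since $\mathbb F_p$ (and any finite extension thereof, which is what appears as the residue field of the integral closure $S$ in your case~(i) argument) is perfect, $n=0$ and $\max(2,2)=2$. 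Stating this removes the concern in your final paragraph.
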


Theorem~\ref{combined thm csa} also shows that 
the same conclusion holds if
instead $L$ is the fraction field of the $p$-adic completion of 
$\mbb Z_p^{\mathrm{ur}}[[x]]$ or $\mbb Z_p^{\mathrm{ur}}[x]$, with $\per(\alpha)=\ind(\alpha)$ if $\per(\alpha)$ is prime to $p$.  

For quadratic forms, we prove an analog of Theorem~\ref{intro combined thm csa}; see
Theorem~\ref{combined thm qf}.  This yields results about values of the $u$-invariant in 
mixed characteristic (Corollaries~\ref{mixed char uinv} and~\ref{char 2 res fld u-inv}), 
and also the following result in equicharacteristic:

\begin{thm} \label{intro thm qf}
Let $k$ be a field of characteristic unequal to two, and let $E$ be a finite separable extension of the fraction field of $k[x][[t]]$ or of $k[[x,t]]$.  Then
\begin{compactitem}
\item
$u(E)=4$ if $k$ is algebraically closed.
\item
$u(E)=8$ if $k$ is finite, or if $k=k_0((z))$ with $k_0$ algebraically closed.
\item
$u(E)=16$ if $k=k_0((z))$ with $k_0$ finite, or if $k=\mbb Q_p$ for some $p$ (which can equal $2$).
\item
$u(E)=32$ if $k=\mbb Q_p(z)$ or if $k=\mbb Q_p((z))$ for some $p$.
\end{compactitem}
\end{thm}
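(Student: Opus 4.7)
The plan is to derive Theorem~\ref{intro thm qf} by combining the general upper bound from Theorem~\ref{combined thm qf}, the quadratic-forms analog of Theorem~\ref{intro combined thm csa}, with matching lower bounds coming from classical Springer-type anisotropy arguments. The pattern in each case is $u(E) = 4\,u(k)$: the factor of $4$ reflects the two-dimensional nature of the complete setup and parallels the classical equality $u(K) = 2\,u(k)$ for a complete discretely valued field $K$ of residue characteristic different from $2$.

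For the upper bound, I would invoke Theorem~\ref{combined thm qf} to conclude $u(E) \le 4\,u(k)$, and then substitute the known values of $u(k)$ in each listed case: $u(\bar k)=1$; $u(\mathbb{F}_q)=2$ for $q$ odd; $u(\mathbb{Q}_p)=4$; $u(k_0((z)))=2u(k_0)$ by Springer (giving $2$ and $4$ for $k_0$ algebraically closed or finite respectively); $u(\mathbb{Q}_p((z)))=8$ by another application of Springer; and $u(\mathbb{Q}_p(z))=8$ by the theorem of Parimala--Suresh. Multiplying by $4$ then yields the claimed values $4$, $8$, $16$, $32$.

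For the lower bound, one starts with an anisotropic form $\phi$ over $k$ of dimension $u(k)$ and considers the scaled doubling $\psi = \phi \otimes \langle 1, x \rangle \otimes \langle 1, t \rangle$ of dimension $4u(k)$. Two applications of Springer's theorem (first at the $t$-adic valuation, then at the $x$-adic valuation on the residue field) show $\psi$ is anisotropic over $k((x))((t))$. Since both $\mathrm{Frac}(k[x][[t]])$ and $\mathrm{Frac}(k[[x,t]])$ embed into $k((x))((t))$ through the natural inclusions, anisotropy of $\psi$ descends to each of these base fields. For a general finite separable extension $E$, one needs to identify a discrete valuation on $E$ whose residue field is a finite extension $k'/k$ with $u(k') = u(k)$, and then lift an anisotropic form of dimension $u(k')$ over $k'$ through two Springer doublings along a suitable completion of $E$.

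The main obstacle will be establishing this lower bound for arbitrary finite separable extensions $E$, since the $u$-invariant can in principle drop under base change. Here one relies on the fact that each residue field $k$ in the list is of a type for which all finite extensions preserve the $u$-invariant: finite extensions of $\mathbb{F}_q$ remain finite fields with $u=2$; finite extensions of $\mathbb{Q}_p$ are $p$-adic local fields with $u=4$; finite extensions of $k_0((z))$ with $k_0$ algebraically closed or finite remain of this form up to replacing $k_0$ by a finite extension (with the same $u$-invariant); and finite extensions of $\mathbb{Q}_p(z)$ or $\mathbb{Q}_p((z))$ retain $u=8$ by the full strength of the Parimala--Suresh result and its reinterpretations via patching. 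Verifying each case and pinning down the correct valuation of $E$ to carry the Springer lift completes the argument.
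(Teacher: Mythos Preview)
Your overall plan is aligned with the paper's: both routes funnel through Theorem~\ref{combined thm qf}. But you have conflated $u(k)$ with $u_s(k)$ in the upper bound, and that is where the actual content lies. Theorem~\ref{combined thm qf}(a) gives $u(E)\le 4u_s(k)$, not $4u(k)$; the definition of $u_s(k)$ requires controlling the $u$-invariant of every finitely generated extension of $k$ of transcendence degree \emph{one}, not merely the finite extensions. Knowing $u(\mbb Q_p)=4$ does not by itself give $u_s(\mbb Q_p)=4$: for that you need $u$ of every function field of a curve over $\mbb Q_p$ to be at most $8$, which is precisely the Parimala--Suresh/Leep theorem. Likewise, for $k=\mbb Q_p(z)$ or $\mbb Q_p((z))$ you need $u$ of function fields over $k$ to be at most $16$, and that does not follow from the facts you list; it requires Leep's $A_n(2)$ formalism (these fields are $A_3(2)$, hence $u_s\le 8$). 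The paper handles all cases uniformly by invoking Example~\ref{An2 u-inv extension}: each listed $k$ is $A_n(2)$ for the right $n$, which simultaneously yields $u_s(k)=2^n$ and the stability under finite extensions, so that part~(b) of Theorem~\ref{combined thm qf} applies directly and gives both inequalities at once.

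Your separate lower-bound sketch via two Springer steps is correct in principle and is essentially what underlies Proposition~\ref{u-inv of FW}(b); the paper simply packages it inside Theorem~\ref{combined thm qf}(b) rather than rederiving it. The place where your sketch is vague (finding the right valuation on a general finite separable extension $E$) is exactly what the paper's machinery handles: $E$ is identified with some $F'_{U'}$ via Proposition~\ref{descend_exten}, and then a closed point of $U'$ supplies the needed residue field, a finite extension of $k$. So your lower bound is fine once made precise; the genuine gap is the missing verification of $u_s(k)=u(k)$ in the upper bound, which in the $p$-adic cases is a theorem, not a bookkeeping step.
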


Note that the value of the $u$-invariant or the period-index bound for a given field does not in general give much information about the corresponding invariant for arbitrary finite separable extensions.  So the above results would not follow simply from knowing the values of these invariants for the fraction fields of rings of the form $T[x]$ or $T[[x]]$.

Previous results about the $u$-invariant and period-index bounds for related fields appeared in 
such papers as \cite{PS10}, \cite{HHK}, \cite{Leep}, \cite{Salt:DivAlg}, \cite{deJ}, and \cite{Lieb}.
To obtain our present results, we build on the patching framework that was 
used in our previous
manuscripts \cite{HH:FP}, \cite{HHK}, \cite{HHK:H1}, and \cite{HHK:Weier}.  

As in those papers, the patching framework also enables us to obtain
local-global principles.  
In particular, our Corollary~\ref{LGP isotropy local cor} proves a local-global principle for isotropy;
Corollary~\ref{uinv max} relates the $u$-invariants of fields to those of their completions; and Corollary~\ref{per-ind max} provides a local-global principle for the period-index bound of a field.
See also related results in \cite{CPS}, \cite{PS:PIu}, and \cite{HuLaurent}.  

The key new ingredient in this paper is a refinement principle for patching.  As
in the patching framework, we consider a projective normal curve over a complete discrete valuation ring, 
and we choose a finite partition of the closed fiber.
Criteria for patching and local-global principles are given in terms of intersection and factorization properties for a certain quadruple of rings arising from the partition.
By enlarging the given partition or modifying the model (e.g.\ by blowing up),
one can refine a quadruple, obtaining a 
new one with one part expanded.  The refinement principle that we state in this manuscript 
relates the intersection and factorization properties of a given quadruple to that of two other quadruples: the refined one, and the   
quadruple arising from the part that was expanded.
This principle is first stated in an abstract context (Proposition~\ref{subfactor}), and then used in a geometric context to establish ``patching on patches'' (Proposition~\ref{local patching}) and patching on exceptional divisors of a blow-up (Proposition~\ref{blowup patch}, answering a question of Yong Hu).  

The manuscript is organized as follows.  In Section~\ref{patch refine section}
we present general results about patching and local-global principles for quadruples of groups or rings, and then state our abstract refinement principle.  
In Section~\ref{patches} we turn to the geometric situation, 
generalizing the patching setup of \cite{HHK} in Section~\ref{setup} to allow more general
open subsets of the closed fiber, and then 
obtaining 
consequences of the refinement principle, in Sections~\ref{patching section}
and~\ref{fact quad subsect}.  We then turn to quadratic forms 
and central simple algebras in Section~\ref{applications}, first proving local-global results in an abstract context (Theorems~\ref{quad form abstract applic} and~\ref{Brauer}), and then specializing to the geometric situation to obtain results about numerical invariants, including those  
above.

We wish to thank Annette Maier, Yong Hu, and Jean-Louis Colliot-Th\'el\`ene for helpful discussions concerning results and ideas in this manuscript.
  
\section{Patching and refinement} \label{patch refine section}

This section proves a refinement principle (Proposition~\ref{subfactor}) that will afterwards permit us to obtain results about patching and local-global principles over certain function fields, once such results are known for related fields.  
In this section the presentation is in a more general framework.  
We begin with a discussion of patching and local-global principles, and the related conditions of factorization and intersection for quadruples.  
That discussion draws heavily on prior papers of the present authors.

\subsection{Diamonds of groups and rings}

Patching is a method that mimics constructions from complex geometry to obtain global objects from more local ones. In our algebraic version, we work with 
objects in a category $\mc C$ consisting of sets, possibly with additional structure (e.g.\ groups or rings), and we will consider quadruples of objects
$S_\bullet=(S, S_1,S_2, S_0)$ together with morphisms forming a commutative diagram
\[
\xymatrix @R=.1cm @C=.3cm {
& S_0 \\
S_1 \ar[ru]^{\beta_1} & & S_2 \ar[lu]_{\beta_2} \\
& S \ar[lu]^{\alpha_1} \ar[ru]_{\alpha_2}
} \eqno{(*)}
\]
As a motivating example, one can think of these as being the collection of functions or other objects on a global space $X$, on two subsets $U_1$ and $U_2$ that cover $X$, and on their intersection $U_0$, as indicated in the following commutative diagram:
\[
\xymatrix @R=.1cm @C=.3cm {
& U_0 \ar[ld]\ar[rd]\\
U_1 \ar[rd] & & U_2 \ar[ld] \\
& X
}
\]
For patching, the two key properties of a quadruple are factorization and intersection (see Theorem~\ref{patching prop thm}).  More precisely:

\begin{defn}  
\renewcommand{\theenumi}{\alph{enumi}}
\renewcommand{\labelenumi}{(\alph{enumi})}
\begin{enumerate}
\item
A {\em diamond} $S_\bullet$ in $\mc C$ consists of a commutative diagram $(*)$ as above
such that 
$(\alpha_1,\alpha_2):S \to S_1 \times_{S_0} S_2$ is a monomorphism.  
For short we will often write $S_\bullet=(S,S_1,S_2,S_0)$ as a quadruple if the maps $\alpha_i, \beta_i$ are understood.
\item
A diamond $S_\bullet$ as above has the
\textit{intersection property} if 
the map $(\alpha_1,\alpha_2):S \to S_1 \times_{S_0} S_2$ is an isomorphism.
It 
is {\em injective} if each of the maps $\alpha_i$, $\beta_i$ is injective.   
\item
Let $G_\bullet=(G, G_1,G_2, G_0)$ be a diamond of groups, together with maps $\alpha_i, \beta_i$. We say that $G_\bullet$ has
the \textit{factorization property} if $G_0 = \beta_1(G_1) \beta_2(G_2)$, i.e.\ every element of $G_0$ is of the form $\beta_1(g_1)\beta_2(g_2)$ with $g_i \in G_i$.
\end{enumerate}
\end{defn}

In the injective case, we often regard the maps $\alpha_i, \beta_i$ as inclusions; and to emphasize this, we write the diamond as $(S \le S_1, S_2 \le S_0)$.  With these identifications, the intersection property asserts that $S = S_1 \cap S_2$ in $S_0$; and the factorization property for diamonds of groups then asserts that each element of $G_0$ can be factored as $g_1g_2$ with $g_i \in G_i$.  By applying factorization to the inverse of each element of $G_0$, we obtain:

\begin{lem}\label{factor ordering}
A diamond of groups
$(G, G_1, G_2, G_0)$ has the factorization property (respectively the intersection
property) if and only if $(G, G_2, G_1, G_0)$ does.
\end{lem}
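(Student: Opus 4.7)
The proof should be short and essentially combinatorial; the key observations are already hinted at in the text immediately preceding the statement.

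For the intersection property, the plan is to note that the swap map $(s_1,s_2)\mapsto(s_2,s_1)$ gives a canonical isomorphism $S_1\times_{S_0} S_2 \iso S_2\times_{S_0} S_1$, and under this isomorphism the canonical map $(\alpha_1,\alpha_2)\colon S\to S_1\times_{S_0} S_2$ is identified with $(\alpha_2,\alpha_1)\colon S\to S_2\times_{S_0} S_1$. Hence one is an isomorphism (resp.\ monomorphism) if and only if the other is, which is precisely what is needed for the symmetry of the intersection property (and incidentally also for the symmetry of being a diamond in the first place).

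For the factorization property, the plan is to use inversion in $G_0$, exactly as suggested in the line preceding the lemma. Assume $G_0=\beta_1(G_1)\beta_2(G_2)$ and pick $g\in G_0$. Apply factorization to $g^{-1}\in G_0$ to obtain $g_1\in G_1$ and $g_2\in G_2$ with $g^{-1}=\beta_1(g_1)\beta_2(g_2)$. Taking inverses and using that $\beta_1,\beta_2$ are group homomorphisms yields
\[
g=\beta_2(g_2^{-1})\beta_1(g_1^{-1})\in \beta_2(G_2)\beta_1(G_1),
\]
which gives the factorization property for $(G,G_2,G_1,G_0)$. The reverse implication is identical by symmetry of the argument.

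There is essentially no obstacle here; the only mild point to be careful about is that factorization is an asymmetric statement about products, so one must invoke inversion rather than simply permuting factors. Both halves of the lemma together amount to observing that the factorization and intersection properties are invariant under the obvious $\mathbb Z/2$-symmetry of the diamond that swaps the two ``middle'' vertices.
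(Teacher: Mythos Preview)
Your proposal is correct and follows exactly the approach the paper indicates: the paper's entire proof is the one-line remark preceding the lemma, namely that factorization for the swapped diamond follows by applying factorization to the inverse of each element of $G_0$. Your treatment of the intersection property via the swap isomorphism $S_1\times_{S_0}S_2 \iso S_2\times_{S_0}S_1$ is the obvious argument the paper leaves implicit.
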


We will often consider injective diamonds of rings $F_\bullet=(F\le F_1,F_2\le F_0)$ with $F$ is a field and each $F_i$ a direct product of finitely many fields.  In particular, we have:

\begin{ex} \label{graph example}
Let $\Gamma$ be a bipartite connected (multi-)graph, with vertex set $\mc V = \mc V_1 \sqcup \mc V_2$ and edge set $\mc E$.  Suppose that we are given a $\Gamma$-field in the sense of \cite[Section~2.1.1]{HHK:Hi};
i.e.\ a field $F_v$ for each $v \in \mc V$ and a field $F_e$ for each $e \in \mc E$, together with an inclusion $F_v \hookrightarrow F_e$ whenever $v$ is a vertex of $e$.  These fields and inclusions define an inverse system of fields; and if the inverse limit is a field $F$ then this is called
a ``factorization inverse system'' over $F$ (\cite{HHK:H1}, Section~2), and the graph together with the associated fields is called a $\Gamma/F$-field (\cite{HHK:Hi}, Section~2.1.1).
In this situation, set
\(F_i=\prod_{v \in \mc V_i} F_v\) for $i=1,2$
and set \(F_0 = \prod_{e \in \mc E} F_e\).  Then the inclusions $F_v
\hookrightarrow F_e$ induce inclusions $F_i \hookrightarrow F_0$ for $i=1,2$;
and $F_\bullet=(F\leq F_1,F_2\leq F_0)$ is an injective diamond of the above form.
\end{ex}

In fact, every such diamond arises in this way:

\begin{prop} \label{diamonds from factorization systems}
Let $F_\bullet = (F \le F_1,F_2 \le F_0)$ be an injective diamond of rings having the intersection property, with $F$ a field and
each $F_i$ a finite product of fields. Then $F_\bullet$
is induced from a factorization inverse system over $F$ as in Example~\ref{graph example}.
\end{prop}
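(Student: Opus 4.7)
The plan is to reconstruct the graph $\Gamma$ directly from the idempotent structures of $F_0$, $F_1$, and $F_2$.  Since each of these rings is a finite product of fields, it has a canonical complete set of pairwise orthogonal primitive idempotents.  I take $\mc V_i$ to be the set of primitive idempotents of $F_i$ for $i = 1, 2$, and $\mc E$ the corresponding set for $F_0$; writing $\epsilon_v$ and $\epsilon_e$ for the corresponding idempotents, we get $F_i = \prod_{v \in \mc V_i} F_v$ and $F_0 = \prod_{e \in \mc E} F_e$, where $F_v = \epsilon_v F_i$ and $F_e = \epsilon_e F_0$ are fields.

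Under the inclusion $F_i \hookrightarrow F_0$, each $\epsilon_v$ maps to an idempotent of $F_0$, which is uniquely a sum $\sum_{e \in E_v} \epsilon_e$ for some subset $E_v \subseteq \mc E$.  I declare $e$ to be incident to $v$ exactly when $e \in E_v$.  Because the idempotents $\epsilon_v$, $v \in \mc V_i$, are pairwise orthogonal and sum to $1 = \sum_{e \in \mc E} \epsilon_e$ in $F_0$, the sets $E_v$ (for $v \in \mc V_i$) partition $\mc E$; hence every edge is incident to exactly one vertex in $\mc V_1$ and exactly one vertex in $\mc V_2$, yielding a bipartite multi-graph $\Gamma = (\mc V_1 \sqcup \mc V_2, \mc E)$.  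For each incident pair $(v,e)$, the composition $F_v \hookrightarrow F_i \hookrightarrow F_0 \twoheadrightarrow F_e$ (the last map being multiplication by $\epsilon_e$) is a nonzero map of fields, hence an inclusion; and by construction these inclusions reassemble to the given maps $F_i \hookrightarrow F_0$.  The inverse limit of this $\Gamma$-field is therefore exactly the equalizer $F_1 \times_{F_0} F_2$, which by the intersection property equals $F$, as required by Example~\ref{graph example}.

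It remains to verify that $\Gamma$ is connected.  Since $F_i \hookrightarrow F_0$ is injective, no $\epsilon_v$ can have zero image, so every vertex is incident to at least one edge; in particular, each connected component of $\Gamma$ contains vertices from both $\mc V_1$ and $\mc V_2$.  If $\Gamma$ were disconnected, a decomposition $\Gamma = \Gamma' \sqcup \Gamma''$ into two nonempty subgraphs would give compatible ring decompositions $F_i = F'_i \times F''_i$ and $F_0 = F'_0 \times F''_0$ with all six factors nonzero and the inclusions $F_i \hookrightarrow F_0$ respecting them.  Then
\[
F \cong F_1 \times_{F_0} F_2 \cong (F'_1 \times_{F'_0} F'_2) \times (F''_1 \times_{F''_0} F''_2),
\]
a proper product of two nonzero rings (each contains the image of $(1,1)$), contradicting the hypothesis that $F$ is a field.

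The main obstacle is essentially bookkeeping: matching the ring-theoretic data (primitive idempotents and the way they decompose under the inclusions) to the combinatorial data (vertices, edges, and incidence), and verifying that the reconstructed diamond agrees with the original.  Once this dictionary is in place, the intersection property gives the inverse-limit identification, and the hypothesis that $F$ is a field is used in precisely one essential spot, namely to force $\Gamma$ to be connected.
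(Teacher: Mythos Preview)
Your proof is correct and follows essentially the same approach as the paper: both arguments identify edges with the field factors of $F_0$ and vertices with the field factors of $F_1, F_2$, and determine incidence by analyzing how the inclusions $F_i \hookrightarrow F_0$ decompose factor by factor --- you phrase this via primitive idempotents, while the paper phrases it dually via the observation that each composite $\prod_v F_v \to F_e$ has prime (hence maximal) kernel, but these are the same computation. You additionally verify connectedness of $\Gamma$ explicitly, which the paper's proof omits (it relies only on the inverse limit being a field, which is what the definition of ``factorization inverse system'' actually requires).
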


\begin{proof}
Write \(F_i = \prod_{\lambda \in \Lambda_i} F_\lambda\) with each
$F_\lambda$ a field, and let $\mc E = \Lambda_0, \mc V_1  = \Lambda_1, \mc V_2 =
\Lambda_2$. To give the structure of a graph $\Gamma$, we will
associate to every $e \in \mc E$ elements $v_i \in \mc V_i$ for $i =
1, 2$. To do this, choose $e' \in \mc E$ and for $i=1,2$ consider the composition
\(\prod_{v \in \mc V_i} F_v \to \prod_{e \in \mc E} F_e \to F_{e'}.\)
Since $F_{e'}$ is a field, the image of
this homomorphism is a domain. So the kernel must be a prime
and hence maximal ideal. Thus this composition
factors through a unique projection
\(\prod_{v \in \mc V_i} F_v \to F_{v'_i}.\)
The assignment of $e$ to $(v'_1, v'_2)$ gives a graph $\Gamma$, and the
above homomorphisms $F_{v_i'} \to F_{e'}$ give the structure of a $\Gamma$-field.
The inverse limit of the fields $F_v,F_e$ is the intersection $F_1 \cap F_2$,
which is equal to $F$ by the intersection property; and so these fields form a
factorization inverse system, which induces the diamond.
\end{proof}

\subsection{Patching and local-global principles}

To study patching and local-global principles in this framework, we will need 
to introduce the notion of diamonds of 
categories and tensor categories.

Let $\mc C_1, \mc C_2, \mc C_0$ be categories, and suppose we
are given functors $G_i: \mc C_i \to \mc C_0$. The (2-)fiber product
$\mc C_1 \times_{\mc C_0} \mc C_2$ is defined to be the category whose
objects are triples $(C_1, C_2, \phi)$ where $C_i \in \mc C_i$ and
$\phi: G_1(C_1) \to G_2(C_2)$ is an isomorphism. A morphism $(C_1,
C_2, \phi) \to (D_1, D_2, \psi)$ is defined to be a pair or morphisms
$f_i : C_i \to D_i$ such that we have a commutative square
\[
\xymatrix @R=.5cm @C=.3cm {
G_1(C_1) \ar[d]_{f_1} \ar[r]^{\phi} & G_2(C_2) \ar[d]^{f_2} \\
G_1(D_1) \ar[r]_\psi & G_2(D_2)
}
\]

\begin{defn}[Patching Problems]\label{patching prob def}
A {\em diamond} of (tensor) categories is a 
diagram
\[
	\xymatrix @R=.3cm @C=.3cm {
		& \mc C_0 \\
		\mc C_1 \ar[ru]^{G_1} & & \mc C_2 \ar[lu]_{G_2} \\
		& \mc C \ar[lu]^{F_1} \ar[ru]_{F_2}  
}
\]
 of (tensor) categories and functors, together with a natural isomorphism of functors
\(\alpha: G_1F_1 \to G_2F_2\), such that the functor
\(\Phi: \mc C \to \mc C_1 \times_{\mc C_0} \mc C_2\),
given by $\Phi(c) = (F_1(c), F_2(c), \alpha(c))$, is
essentially injective.  In this situation:
\begin{compactenum}
\renewcommand{\theenumi}{\alph{enumi}}
\renewcommand{\labelenumi}{(\alph{enumi})}
\item
A \textit{patching problem} is an object $C_\bullet$ in the fiber
product category $\mc C_1 \times_{\mc C_0} \mc C_2$.
\item
A \textit{solution} to a patching problem 
$C_\bullet$ is an object $C \in \mc C$ such that $\Phi(C) \cong
C_\bullet$.
\item
\textit{Patching holds} for the diamond if 
$\Phi$ is an equivalence of categories.
\end{compactenum}
\end{defn}

\begin{ex}[Patching for torsors]
If $R_\bullet=(R, R_1,R_2, R_0)$ is a diamond of rings, and $G$ is an algebraic group
over $R$, we obtain a diamond of categories of torsors
$\tors(G_{R_\bullet})$. In this case, we refer to
patching problems (solutions, etc.) for $\tors(G_{R_\bullet})$ as patching
problems (solutions, etc.) for $G$-torsors.
\end{ex}

\begin{ex}[Patching for free modules] \label{other categories}
Similarly, if $R_\bullet=(R, R_1,R_2, R_0)$ is a diamond of rings, we obtain a diamond $\F({R_\bullet})$
of tensor categories of free modules of finite rank. If patching
holds for $\F(R_\bullet)$, it follows that patching holds for other
categories of structures which may be defined in terms of the category
of vector spaces and its tensor structure (e.g.\ central simple algebras).  
The proof is as in \cite{HH:FP},
Theorems~7.1 and 7.5.
\end{ex}

\begin{thm}\label{patching prop thm}
Let $R_\bullet=(R, R_1,R_2, R_0)$ be a diamond of rings.
\renewcommand{\theenumi}{\alph{enumi}}
\renewcommand{\labelenumi}{(\alph{enumi})}
\begin{enumerate}
\item \label{patching and matrix factorization}
The following conditions are equivalent:
\begin{enumerate}[\ \ (i)]
\item \label{matrix factorization cond}
For every $n \geq 1$, the diamond of groups $\operatorname{GL}_n(R_\bullet)$ satisfies the intersection and
factorization properties.
\item \label{patching for free modules cond}
Patching holds for free modules; that is, for the diamond of
categories
$\F(R_\bullet)$.
\end{enumerate}
\item \label{inverse to equivalence}
Under these conditions, the inverse to the equivalence
$\Phi:\F(R) \to \F(R_1) \times_{\F(R_0)} \F(R_2)$ is given by taking the intersection of free modules.
\item \label{patching torsors}
Suppose that $R$ is a field, that each $R_i$ is a finite product of fields, and the diamond is injective.  Then the above two conditions are also equivalent to:
\begin{enumerate}[\!\!\!\!\!\!\!\!(iii)]
\item \label{patching for torsors cond}
For every linear algebraic group $G$ over $R$, patching holds for $G$-torsors,
i.e.\ for $\tors(G_{R_\bullet})$.
\end{enumerate}
\end{enumerate}
\end{thm}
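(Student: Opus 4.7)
The plan is to prove (a) via explicit gluing: factorization reduces a general patching isomorphism to the identity via a change of basis, after which the candidate solution is the honest intersection of the two free modules. This construction of a quasi-inverse simultaneously yields (b). The converse implication (ii)$\Rightarrow$(i) reads off the matrix factorization from the patching equivalence applied to a tautological datum. Part (c) is then obtained by extending from free modules to $G$-torsors via the tensor-category machinery of Example~\ref{other categories}.

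For (i)$\Rightarrow$(ii) and (b): given a patching problem $(M_1, M_2, \phi)$ of free $R_i$-modules of some common rank $n$, I would choose bases and let $A \in \GL_n(R_0)$ represent $\phi$. By the factorization property (and Lemma~\ref{factor ordering}), write $A = \beta_1(B_1)\beta_2(B_2)$ with $B_i \in \GL_n(R_i)$, and change the bases of $M_1$ and $M_2$ by $B_1^{-1}$ and $B_2$ respectively; in the new bases $\phi$ becomes the identity, so $M_1$ and $M_2$ may be viewed as $R_i$-submodules of a common $R_0^n$. Setting $M := M_1 \cap M_2$, one notes that intersection for $\GL_2$ applied to unipotent matrices $I + r E_{12}$ upgrades the monomorphism $R \hookrightarrow R_1 \times_{R_0} R_2$ to an isomorphism, and from this one deduces that $M$ is free of rank $n$ over $R$ with $M \otimes_R R_i \cong M_i$ canonically. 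Full faithfulness of $\Phi$ follows from the same intersection property applied to homomorphism modules, yielding (ii) together with the explicit inverse description asserted in (b).

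For (ii)$\Rightarrow$(i): full faithfulness of $\Phi$ at the object $R^n$ gives a bijection between $\oper{End}_R(R^n) = M_n(R)$ and $M_n(R_1) \times_{M_n(R_0)} M_n(R_2)$; restricting to invertible elements yields the intersection property for $\GL_n$. For factorization, given $A \in \GL_n(R_0)$, I would form the patching problem $(R_1^n, R_2^n, A)$ and let $M$ be a free $R$-module whose image under $\Phi$ is isomorphic to it (essential surjectivity). Choosing a basis of $M$ produces matrices $C_i \in \GL_n(R_i)$ implementing the isomorphisms $M \otimes_R R_i \cong R_i^n$, and the compatibility of these with $A$ unwinds to $A = \beta_2(C_2)\beta_1(C_1)^{-1}$; Lemma~\ref{factor ordering} then yields the factorization in the form $\beta_1(\GL_n(R_1))\beta_2(\GL_n(R_2))$.

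For (c), the implication (ii)$\Rightarrow$(iii) is the content of Example~\ref{other categories} applied to the tensor category of free modules: a linear algebraic group $G$ embeds into some $\GL_n$, and $G$-torsors can be described in terms of vector spaces equipped with tensorial structure (as in \cite{HH:FP}, Theorems~7.1 and~7.5), so patching for free modules propagates to patching for $G$-torsors. The hypotheses on $R$ and the $R_i$ place us, via Proposition~\ref{diamonds from factorization systems}, in the factorization-inverse-system setting where these earlier results apply. Conversely, (iii)$\Rightarrow$(ii) follows by specializing to $G = \GL_n$: since $R$ is a field and each $R_i$ a finite product of fields, every $\GL_n$-torsor over these rings is trivial, so patching for $\GL_n$-torsors reduces to patching for free modules of rank $n$. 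I expect the main obstacle to be the tensor-categorical bookkeeping in (c), where compatibility of the patching equivalences with the multiplicative structure must be checked carefully; by contrast, the matrix arguments in (a) are routine once the change-of-basis reduction is set up correctly.
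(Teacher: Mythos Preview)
Your treatment of parts~(a) and~(b) is correct and is in fact an explicit unpacking of what the paper does by citation: the paper first extracts the intersection property $R = R_1 \times_{R_0} R_2$ from (ii) by looking at endomorphisms of the rank-one patching datum $(R_1,R_2,\id)$, then invokes \cite[Proposition~2.1]{Ha:CAPS} for the equivalence with $\GL_n$-factorization and the fiber-product description of the inverse. Your unipotent-matrix trick for upgrading the diamond monomorphism to the intersection property, and your change-of-basis reduction of a general gluing matrix to the identity, are precisely the ingredients of that cited result, so here you and the paper agree in substance; your version simply spells out the argument rather than citing it.

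The gap is in (ii)$\Rightarrow$(iii). Example~\ref{other categories} and \cite[Theorems~7.1,~7.5]{HH:FP} concern algebras and similar structures expressible directly as a vector space with specified tensorial constraints; they do not literally cover $G$-torsors for an arbitrary linear algebraic group $G$. The Tannakian heuristic you sketch (embed $G\hookrightarrow\GL_n$, describe torsors via tensorial data) can be made rigorous, but it is genuinely more work than an appeal to Example~\ref{other categories} provides, and the references you cite do not carry that load. The paper avoids this by a different route: it invokes Proposition~\ref{diamonds from factorization systems} to recognize the injective diamond of fields as arising from a $\Gamma/F$-field, and then cites \cite[Theorem~2.1.4]{HHK:Hi}, which is a statement specifically about patching $G$-torsors over such factorization inverse systems. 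Your reverse implication (iii)$\Rightarrow$(ii), via $G=\GL_n$ and Hilbert~90 over each field factor, is fine and matches the spirit of the paper's argument. Your own closing remark that the ``tensor-categorical bookkeeping in~(c)'' is the main obstacle is apt: that bookkeeping is exactly what \cite[Theorem~2.1.4]{HHK:Hi} supplies and what your proposal currently lacks.
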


\begin{proof}
First observe that if $\Phi$ is an equivalence of categories, then necessarily
$R$ is the fiber product $R_1 \times_{R_0} R_2$.  Namely, for any $c \in
R_1 \times_{R_0} R_2$, consider the endomorphism of the rank one object
$(R_1,R_2,\id)$ 
given by multiplication by $c$.  Since $\Phi$ is an
equivalence, this morphism is induced by an endomorphism of the free rank one
$R$-module $R$; i.e.\ by multiplication by some element of $R$, which is
necessarily equal to $c$.  Thus $R_1 \times_{R_0} R_2 = R$.
The first two parts of the assertion now follow from \cite[Proposition~2.1]{Ha:CAPS}, which
says that if $R = R_1 \times_{R_0} R_2$, then $\Phi$ is an equivalence of categories
if and only if factorization holds; and moreover that in this case the inverse
of $\Phi$ is given by taking the fiber product of objects.  
The third part follows from \cite[Theorem~2.1.4]{HHK:Hi}, which applies
here by Proposition~\ref{diamonds from factorization systems} above.
\end{proof}

\begin{defn} \label{patching prop def}
\textit{Patching holds} for the diamond of rings $R_\bullet$ if
either of the equivalent conditions of Theorem~\ref{patching prop thm}(\ref{patching and matrix factorization})
holds.
\end{defn}

\begin{lem} \label{corestriction}
Let $R_\bullet=(R, R_1,R_2, R_0)$ be a diamond of rings.  Let $R
\subseteq S$ be a finite extension of rings such that $S$ is a free
$R$-module.   Set $S_i = S \otimes_R R_i$.
\begin{enumerate}
\renewcommand{\theenumi}{\alph{enumi}}
\renewcommand{\labelenumi}{(\alph{enumi})}
\item \label{corestriction int}
If the intersection property holds for
$R_\bullet$, then it also holds for
$S_\bullet$.
\item \label{corestriction fact}
If patching holds for $R_\bullet$, then it also holds for $S_\bullet$.
\end{enumerate}
\end{lem}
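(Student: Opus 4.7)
The plan is as follows. For part (a), I would exploit that $S$ is a free, and hence flat, $R$-module. The intersection property for $R_\bullet$ says that $R_1 \times_{R_0} R_2 = R$; equivalently, the sequence $0 \to R \to R_1 \oplus R_2 \to R_0$ is exact (as $R$-modules). Since $-\otimes_R S$ is exact and sends $R \mapsto S$, $R_i \mapsto S_i$, and $R_0 \mapsto S_0$, applying it produces the exact sequence $0 \to S \to S_1 \oplus S_2 \to S_0$, which says exactly that $(S \to S_1 \times_{S_0} S_2)$ is an isomorphism; by naturality this is an isomorphism of rings, giving intersection for $S_\bullet$.

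For part (b), I would use the equivalent form of patching given by Theorem~\ref{patching prop thm}(\ref{patching and matrix factorization})(\ref{matrix factorization cond}), so it suffices to verify intersection and factorization for $\operatorname{GL}_n(S_\bullet)$ for every $n \geq 1$. The intersection property reduces, as in (a), to showing $M_n(S) = M_n(S_1) \times_{M_n(S_0)} M_n(S_2)$: indeed $M_n(S)$ is again free (of rank $n^2[S:R]$) over $R$, so part (a) applies to it directly, and the passage from matrices to invertible matrices is automatic, since a matrix with a two-sided inverse on each side of the intersection lies, together with its inverse, in the intersection.

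The main obstacle is factorization for $\operatorname{GL}_n(S_\bullet)$. Given $A \in \operatorname{GL}_n(S_0)$, I would form the patching datum $(S_1^n, S_2^n, \cdot A)$ of free $S_i$-modules. Forgetting $S$-structure and using a fixed $R$-basis of $S$ to view each $S_i^n$ as a free $R_i$-module of rank $n[S:R]$, this is a patching problem for free $R$-modules, which has a solution $N = S_1^n \cap S_2^n \subset S_0^n$ by Theorem~\ref{patching prop thm}(\ref{patching and matrix factorization})--(\ref{inverse to equivalence}) applied to $R_\bullet$. By fullness of the $R$-level equivalence $\Phi$, left multiplication by $s \in S$ on each $S_i^n$ lifts uniquely to an $R$-endomorphism of $N$, and uniqueness makes $s \mapsto \mu_s$ a ring map $S \to \operatorname{End}_R(N)$; the canonical isomorphisms $N \otimes_R R_i \to S_i^n$ are then $S_i$-linear for the same reason.

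The task is then to upgrade $N$ from a free $R$-module with $S$-action to a free $S$-module of rank $n$, for then an $S$-basis of $N$ maps to $S_i$-bases of $S_1^n$ and $S_2^n$, yielding invertible matrices $A_1 \in \operatorname{GL}_n(S_1)$ and $A_2 \in \operatorname{GL}_n(S_2)$ with $A = A_1 A_2$. I anticipate this being the hard step. My plan is to patch the endomorphism rings: by the intersection property for $M_n(S_\bullet)$ (established above), $\operatorname{End}_S(N) = M_n(S_1) \cap M_n(S_2) = M_n(S)$, after which one can try to use the primitive idempotents $E_{ii} \in M_n(S)$ to decompose $N$ into $n$ summands, each a rank-one projective $S$-module that is trivialized after base change to each $R_i$. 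Producing a global trivialization of such an invertible $S$-module from its patch-wise trivializations is the crux, and I expect it to come out of a careful combined application of part (a) (to identify sections globally) and the already-proved intersection part of (b) (to control transition units in $S_0^*$).
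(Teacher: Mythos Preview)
Your proof of part~(a) is correct and is exactly the paper's argument: the intersection property is the left-exactness of $0 \to R \to R_1 \times R_2 \to R_0$, and tensoring with the free module $S$ preserves it.

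For part~(b), the paper takes a different and shorter route. Rather than verifying intersection and factorization for $\GL_n(S_\bullet)$ directly, it works at the level of module categories. Fixing an $R$-basis $(s_j)$ of $S$ with structure constants $s_j s_k = \sum_\ell a_{j,k}^\ell s_\ell$, a finitely generated free $S_i$-module is the same datum as a finitely generated free $R_i$-module equipped with commuting $R_i$-endomorphisms $\tilde s_j$ satisfying those relations, with morphisms required to commute with the $\tilde s_j$. Since $\Phi_R : \mathcal F(R) \to \mathcal F(R_1)\times_{\mathcal F(R_0)}\mathcal F(R_2)$ is an equivalence of categories and hence carries along any structure expressible via objects and morphisms, patching for free $R$-modules with this extra endomorphism structure follows immediately, and this is patching for $\mathcal F(S_\bullet)$. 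No separate freeness argument is needed.

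Your approach is not wrong in spirit, but the specific strategy you outline for the hard step has a gap. You want to identify $\operatorname{End}_S(N)$ with $M_n(S_1)\cap M_n(S_2) = M_n(S)$ via part~(a). But $\operatorname{End}_S(N)$ is the set of pairs $(\phi_1,\phi_2)\in M_n(S_1)\times M_n(S_2)$ satisfying $A\phi_1 = \phi_2 A$ in $M_n(S_0)$, i.e.\ the fiber product along the \emph{$A$-conjugated} map $M_n(S_1)\to M_n(S_0)$, not the standard inclusion. The intersection property you established in part~(a) is for the untwisted diamond $M_n(S_\bullet)$ and does not apply here; indeed, identifying this twisted fiber product with $M_n(S)$ is essentially equivalent to the factorization you are trying to prove. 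The idempotent-decomposition plan therefore does not get off the ground as written. The paper's categorical packaging is precisely what lets one avoid this circularity: once you know $\Phi_R$ is an equivalence, the enriched structure (including its morphisms) patches automatically, and you never have to isolate $\operatorname{End}_S(N)$ as an explicit intersection.
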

\begin{proof}
For the first part, suppose that $R_\bullet$ has the intersection
property.  We thus have a left exact
sequence
\(0 \to R \to R_1 \times R_2 \to R_0\) of $R$-modules,
where the map on the right is given by 
subtracting the image under $R_2 \to R_0$ from the image under $R_1 \to R_0$.
Since $S$ is free over $R$, the sequence
\(0 \to S \to S_1 \times S_2 \to S_0\)
is also left exact. Consequently $S_\bullet$ also has the intersection
property.

For the second statement, let $(s_j)$ be a basis for the free $R$-module $S$ and
suppose that $s_j s_k = \sum a_{j,k}^\ell s_\ell$. Then the category
of finitely generated free $S_i$-modules is equivalent to the category whose objects are
finitely generated free $R_i$-modules together with $R_i$-endomorphisms $\til
s_j$ (corresponding to multiplication by $s_j$) such that
$\til s_j \til s_k = \sum a_{j,k}^\ell \til s_\ell$, and where the
morphisms in the category are required to commute with each $\til s_j$. In particular,
since patching holds for the diamond of categories
$\F(R_\bullet)$, it follows that patching also holds for
$\F(S_\bullet)$.  That is, patching holds for $S_\bullet$.
\end{proof}

\begin{defn} \label{local-global def}
Let $R_\bullet=(R, R_1,R_2, R_0)$ be a diamond of rings.  Let $\mc V$
be a class of $R$-varieties.  We say that the {\em local-global principle
  holds} for $\mc V$ with respect to $R_\bullet$ if for each $V \in \mc V$,
the condition $V(R_i) \neq \emptyset$ for $i = 1, 2$ implies that $V(R)\neq \emptyset$.
\end{defn}

This definition applies in particular to the key case considered above, 
where we are given an injective diamond 
$(F \le F_1,F_2 \le F_0)$, with $F$ is a field and each $F_i$
a finite product of fields $\prod_j F_{ij}$, and where we take 
$\mc V$ to be the class of $G$-torsors over $F$, for some
linear algebraic group $G$ over $F$.  The set of isomorphism classes of
$G$-torsors over $F$ is in natural bijection with the pointed Galois
cohomology set $H^1(F, G)$, and we write
$H^1(F_i,G)$ for $\prod_j H^1(F_{ij},G)$.  The local-global principle
holds for (the class of) $G$-torsors if and only if the natural map
\(\phi: H^1(F, G) \to H^1(F_1, G) \times H^1(F_2, G)\)
has trivial kernel.

Given a diamond $R_\bullet=(R, R_1,R_2, R_0)$ of rings, and a 
linear algebraic group $G$ over $R$, there is an associated
diamond of groups
$G(R_\bullet)=(G(R), G(F_1),G(F_2), G(F_0))$ of rational points.  By embedding 
$G$ in $\GL_{n,R} \subset \mbb A^N_R$ and considering coordinates, 
we immediately obtain the following lemma, which 
was implicitly used in \cite{HHK}, Section~3.

\begin{lem} \label{intersection for groups}
Suppose that $R_\bullet$ is a diamond of rings with the intersection
property and that $G$ is a linear algebraic group over $R$. Then
$G(R_\bullet)$ also has the intersection property.
\end{lem}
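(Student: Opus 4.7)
The plan is to reduce the statement to the intersection property of the diamond $R_\bullet$ applied coordinate-wise, via a closed embedding of $G$ into affine space. I will spell this out in three steps.

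First, I would fix a closed embedding $G \hookrightarrow \GL_{n,R} \subset \mbb A^N_R$ (for instance, realize $\GL_n$ as the closed subscheme of $\mbb A^{n^2+1}$ cut out by $\det(X) \cdot y = 1$, then embed $G$ as a closed subgroup). For any $R$-algebra $S$, this identifies $G(S)$ with the subset of $S^N$ consisting of tuples satisfying the finite collection $\{f_\alpha\}$ of polynomial equations in $R[X_1,\dots,X_N]$ defining the image of $G$.

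Second, I would verify the intersection property for affine space: the diamond $\mbb A^N_R(R_\bullet) = (R^N, R_1^N, R_2^N, R_0^N)$ has the intersection property. This is immediate, since the functor $M \mapsto M^N$ commutes with fiber products, so the intersection property for $R_\bullet$ implies $R^N = R_1^N \times_{R_0^N} R_2^N$.

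Third, given $(g_1,g_2) \in G(R_1) \times_{G(R_0)} G(R_2)$, view the $g_i$ as elements of $R_i^N$ compatible over $R_0^N$; by the previous step they come from a unique $g \in R^N$. It remains to check $g \in G(R)$, i.e., $f_\alpha(g) = 0$ in $R$ for every defining equation. For each $\alpha$, the element $f_\alpha(g) \in R$ maps to $f_\alpha(g_i) = 0$ in $R_i$ for $i = 1, 2$, since $g_i \in G(R_i)$. Because the map $R \to R_1 \times_{R_0} R_2$ is an isomorphism, the map $R \to R_1 \times R_2$ is in particular injective, so $f_\alpha(g) = 0$ in $R$. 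Hence $g \in G(R)$, establishing the intersection property for $G(R_\bullet)$.

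There is no real obstacle here; the only thing to be mindful of is the choice of embedding (one must know $G$ sits inside some affine space as a closed subscheme defined by polynomials with coefficients in $R$), which is exactly the definition of a linear algebraic group over $R$ that the paper is using. Everything else is formal from the fact that the intersection property is preserved by taking products of coordinates and by passing to closed subfunctors cut out by equations over $R$.
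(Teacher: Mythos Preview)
Your proposal is correct and follows exactly the approach indicated in the paper, which simply remarks that the lemma is immediate ``by embedding $G$ in $\GL_{n,R} \subset \mbb A^N_R$ and considering coordinates.'' You have just spelled out the details of that one-line argument.
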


\begin{thm} \label{factor local-global}
Suppose that $F_\bullet=(F\leq F_1,F_2\leq F_0)$ is an injective diamond of rings with
$F$ a field and each $F_i$ a finite direct product of fields. Assume moreover
that patching holds for $F_\bullet$.
Then the following statements are equivalent
for a linear algebraic group $G$ over $F$:
\begin{enumerate}
\renewcommand{\theenumi}{(\roman{enumi})}
\renewcommand{\labelenumi}{(\roman{enumi})}
\item $G(F_\bullet)$ satisfies factorization. \label{factorization for G}
\item The local-global principle holds for $G$-torsors with respect to $F_\bullet$. \label{local-global for G}
\item The local-global principle holds, with respect to $F_\bullet$, for the
  class of $F$-varieties $V$ equipped with a $G$-action such that $G(F_0)$
  acts transitively on $V(F_0)$. \label{local-global for homogeneous spaces}
\end{enumerate}
\end{thm}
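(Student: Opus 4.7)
The plan is to prove the cycle (i) $\Leftrightarrow$ (ii), (iii) $\Rightarrow$ (ii), and (ii) $\Rightarrow$ (iii), leveraging that patching holds for $F_\bullet$ (so by Theorem~\ref{patching prop thm}(\ref{patching torsors}) patching holds for $G$-torsors).

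For (i) $\Leftrightarrow$ (ii): I would use patching for $G$-torsors to turn a $G$-torsor $V$ over $F$ into a triple $(V_1,V_2,\phi)$ with $V_i$ a $G_{F_i}$-torsor and $\phi$ an isomorphism over $F_0$. When $V(F_i)\neq\emptyset$ for $i=1,2$, each $V_i$ is trivial, and after fixing trivializations the gluing datum $\phi$ is determined by left-translation by a single element $g\in G(F_0)$. A straightforward check shows that $V$ is trivial (equivalently, $V(F)\neq\emptyset$) precisely when the trivializations can be altered so $g$ becomes the identity, which happens exactly when $g$ factors as $\beta_1(g_1)\beta_2(g_2)^{-1}$ with $g_i\in G(F_i)$. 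By Lemma~\ref{factor ordering} this is equivalent to $G(F_\bullet)$ having the factorization property. Conversely, any $g\in G(F_0)$ arises from some torsor built by patching, and the local-global principle then forces the factorization.

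For (iii) $\Rightarrow$ (ii): A $G$-torsor $V$ over $F$ with $V(F_i)\neq\emptyset$ for $i=1,2$ has $V(F_0)\neq\emptyset$, and $G(F_0)$ acts simply transitively (in particular, transitively) on $V(F_0)$. Hypothesis (iii) then yields $V(F)\neq\emptyset$, so $V$ is trivial.

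For (ii) $\Rightarrow$ (iii): Given $V$ with transitive $G(F_0)$-action and points $x_i\in V(F_i)$, let $\bar x_i\in V(F_0)$ be their images and pick $g\in G(F_0)$ with $g\bar x_1=\bar x_2$. By the already-established (ii) $\Rightarrow$ (i), $G(F_\bullet)$ has the factorization property, so we may write $g^{-1}=\beta_1(a_1)\beta_2(a_2)$ with $a_i\in G(F_i)$. Put $y_1=a_1^{-1}\cdot x_1\in V(F_1)$ and $y_2=a_2\cdot x_2\in V(F_2)$; a direct computation shows $\bar y_1=\bar y_2$ in $V(F_0)$. Choose an affine open $U\subseteq V$ containing the finitely many image points of $\bar y_1=\bar y_2$, and (after shrinking) containing the image points of $y_1$ and $y_2$ as well. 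Patching for $F_\bullet$ implies in particular $F = F_1 \cap F_2$ inside $F_0$, so $U(F)=U(F_1)\times_{U(F_0)}U(F_2)$, and the pair $(y_1,y_2)$ descends to a point of $U(F)\subseteq V(F)$.

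The step I expect to be the main technical obstacle is the very last one: producing a global $F$-point from a matching pair $(y_1,y_2)$ over a non-affine $V$. This requires arranging that both $y_i$ factor through a common affine open, which is not automatic when $F_i$ is a product of fields (so that $y_i$ may hit several points of $V$). Quasi-projectivity of $V$, or a localization argument using the finiteness of the component spectra of each $F_i$, should suffice — but this is the spot where one has to be careful, whereas the factorization/intersection dictionary itself is essentially formal once patching for torsors is in hand.
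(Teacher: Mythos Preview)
Your argument is essentially correct and more self-contained than the paper's. The paper's proof is very short: it invokes Proposition~\ref{diamonds from factorization systems} to identify $F_\bullet$ with a factorization inverse system, then cites the six-term exact sequence of pointed sets from \cite[Theorem~2.4]{HHK:H1},
\[H^0(F_1,G)\times H^0(F_2,G)\to H^0(F_0,G)\to H^1(F,G)\to H^1(F_1,G)\times H^1(F_2,G),\]
so that (i) is surjectivity of the first map and (ii) is triviality of the kernel of the last, giving (i)$\Leftrightarrow$(ii) immediately. For (ii)$\Rightarrow$(iii) the paper simply cites \cite[Corollary~2.8]{HHK:H1}, and (iii)$\Rightarrow$(ii) is the trivial inclusion you also use. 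What you have written is, in effect, a direct reproof of those two cited results: your (i)$\Leftrightarrow$(ii) is the hands-on content of the exact sequence, and your (ii)$\Rightarrow$(iii) is precisely the argument behind \cite[Corollary~2.8]{HHK:H1}. The paper's route is cleaner if those references are available; yours has the advantage of being self-contained.

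One small simplification to your last step: the parenthetical ``after shrinking'' is both the wrong direction and unnecessary. Since each $F_v\hookrightarrow F_e$ is an inclusion of fields, the scheme-theoretic image of $\Spec(F_e)\to V$ coincides with that of $\Spec(F_v)\to V$ whenever $e$ is incident to $v$; injectivity of the diamond guarantees every vertex has an incident edge. Hence an affine open $U$ containing the finitely many image points of $\bar y_1=\bar y_2$ automatically contains those of $y_1$ and $y_2$. The only genuine hypothesis needed is that finitely many points of $V$ lie in a common affine open, which holds for $V$ quasi-projective --- and that is the hypothesis in \cite[Corollary~2.8]{HHK:H1} and in every application the paper makes of this theorem.
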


\begin{proof}
By Proposition~\ref{diamonds from factorization systems}, the diamond $F_\bullet$ arises from a factorization inverse system; and so \cite[Theorem~2.4]{HHK:H1} applies.  Thus there is the following exact sequence of pointed sets:
\[H^0(F_1,G) \times H^0(F_2,G) \to H^0(F_0,G) \to H^1(F,G) \to H^1(F_1,G)
\times H^1(F_2,G).\]
Condition~\ref{factorization for G} asserts surjectivity of the first arrow,
which is equivalent to the third arrow having trivial kernel.  That latter
property is the same as condition~\ref{local-global for G}, as discussed
above.  Moreover the triviality of that kernel implies
condition~\ref{local-global for homogeneous spaces} by
\cite[Corollary~2.8]{HHK:H1}.  Finally, condition~\ref{local-global for
homogeneous spaces} trivially implies  condition~\ref{local-global for
G}, since $G$-torsors over $F$ satisfy the transitivity hypothesis of
condition~\ref{local-global for homogeneous spaces}.
\end{proof}

\subsection{A refinement principle} \label{refinement subsec}
In studying factorization and intersection for a diamond $G_\bullet=(G,
E,H,J)$ of groups, it will prove useful to consider the situation
when $H$ is itself the base for
another diamond of groups $H_\bullet=(H,H_1,H_2,H_0)$. 
We would then like to combine the two
diamonds into one new diamond, thereby refining the original
situation. The natural question is to what extent such refinements preserve
factorization and intersection.

Below, given maps $f:A \to B$, $g:A \to B'$, and $f':A' \to B'$, we write
$(f,g):A \to B \times B'$ for the map $a \mapsto (f(a),g(a))$, and write
$f \times f':A \times A' \to B \times B'$ for 
$(a,a') \mapsto (f(a),f'(a'))$.

\begin{prop}[Refinement Principle] \label{subfactor}
Suppose we are given a commutative diagram of groups and
homomorphisms
\[\ms R = \fbox{\xymatrix @R=.5cm @C=.7cm {
& J & & H_0 \\
E\ar[ru]^{\beta_2} & & H_1 \ar[lu]_{\beta_1}
\ar[ru]^{\gamma_1} & & H_2 \ar[lu]_{\gamma_2} \\
& & & H \ar[lu]^{\varepsilon_1} \ar[ru]_{\varepsilon_2} \\
& & G \ar[lluu]^{\delta_2}\!\! \ar[ru]_{\delta_1}
 }}\]
such that the following diagrams are diamonds of groups:
\vspace{.3cm}

\centerline{
\(
G_\bullet = \fbox{
	\xymatrix @R=.5cm @C=.3cm {
		& J \\
		E \ar[ru]^{\beta_2} & & H \ar[lu]_{\beta_1\varepsilon_1} \\
		& G \ar[lu]^{\delta_2} \ar[ru]_{\delta_1}
	}
} 
\ \ \ G'_\bullet = \fbox{
\xymatrix @R=.5cm @C=.3cm {
& J \times H_0 \\
H_1 \ar[ru]^-{(\beta_1, \gamma_1)} & &
\!\!\!\!\! E \times H_2 \, \ar[lu]_-{\beta_2\times \gamma_2} \\
& G \ar[ru]_-{(\delta_2, \varepsilon_2\delta_1)} \ar[lu]^-{\varepsilon_1\delta_1}
}} \)
\(
\ \ H_\bullet = \fbox{
	\xymatrix @R=.5cm @C=.3cm {
		& H_0 \\
		H_1 \ar[ru]^{\gamma_1} & & H_2 \ar[lu]_{\gamma_2} \\
		& H \ar[lu]^{\varepsilon_1} \ar[ru]_{\varepsilon_2}
	}
}
\)
}

\noindent Then
\begin{shortenum}
\item \label{subfactor 1}
If $G'_\bullet$ has the factorization property then so does $H_\bullet$.
\item \label{subfactor 2}
If $G'_\bullet$ has the factorization property and $H_\bullet$ has
the intersection property then $G_\bullet$ has the factorization
property.
\item \label{subfactor 3}
If $G'_\bullet$ has the intersection property then so
does $G_\bullet$.
\item \label{subfactor 4}
If $G'_\bullet$ has the intersection property and
$G_\bullet$ has the factorization property then $H_\bullet$ has
the intersection property.
\item \label{subfactor 5}
If $G_\bullet$ and $H_\bullet$ have the intersection property
then so does $G'_\bullet$.
\item \label{subfactor 6}
If $G_\bullet$ and $H_\bullet$ have the factorization property
then so does $G'_\bullet$.
\end{shortenum}
\end{prop}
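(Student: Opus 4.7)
My approach is systematic element-chasing through the three diamonds, after unpacking what the universal properties assert concretely. The key observation is that an element of the fiber product $H_1 \times_{J \times H_0}(E \times H_2)$ is nothing but a triple $(h_1, e, h_2)$ with $\beta_1(h_1) = \beta_2(e)$ in $J$ and $\gamma_1(h_1) = \gamma_2(h_2)$ in $H_0$, while a factorization of $(j, h_0) \in J \times H_0$ for $G'_\bullet$ is exactly a common choice of $h_1, e, h_2$ with $j = \beta_1(h_1)\beta_2(e)$ and $h_0 = \gamma_1(h_1)\gamma_2(h_2)$. Throughout I freely use the symmetry of Lemma~\ref{factor ordering} to reorder factorizations when convenient.

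The ``projection'' parts (\ref{subfactor 1}) and (\ref{subfactor 3}) amount to restricting the $G'_\bullet$-data to one side. For (\ref{subfactor 1}), feed $(1, h_0)$ into $G'_\bullet$-factorization and read off the $H_0$-coordinate. For (\ref{subfactor 3}), given $(e, h) \in E \times_J H$, form the triple $(\varepsilon_1(h), e, \varepsilon_2(h))$; the two compatibility conditions hold (one by hypothesis, the other by commutativity of $H_\bullet$), so $G'_\bullet$-intersection produces $g \in G$, and monomorphicity of $H \to H_1 \times_{H_0} H_2$ — built into the definition of the diamond $H_\bullet$ — forces $\delta_1(g) = h$.

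The ``assembly'' parts (\ref{subfactor 5}) and (\ref{subfactor 6}) simply combine the data from $G_\bullet$ and $H_\bullet$. For (\ref{subfactor 5}), apply $H_\bullet$-intersection to $(h_1, h_2)$ to lift to $h \in H$, then apply $G_\bullet$-intersection to $(e, h)$. For (\ref{subfactor 6}), I would first factorize $h_0 = \gamma_1(h_1')\gamma_2(h_2')$ via $H_\bullet$ and then factorize $\beta_1(h_1')^{-1} j = \beta_1\varepsilon_1(h)\beta_2(e)$ via $G_\bullet$ (in this order, by Lemma~\ref{factor ordering}); setting $h_1 := h_1'\varepsilon_1(h)$ and $h_2 := \varepsilon_2(h)^{-1}h_2'$ then satisfies both required equalities, using $\gamma_1\varepsilon_1 = \gamma_2\varepsilon_2$. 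Part (\ref{subfactor 2}) has a similar flavor: $G'_\bullet$-factorization applied to $(j, 1)$ produces $j = \beta_1(h_1)\beta_2(e)$ together with $\gamma_1(h_1) = \gamma_2(h_2^{-1})$, and $H_\bullet$-intersection glues $(h_1, h_2^{-1})$ to some $h \in H$ with $\varepsilon_1(h) = h_1$, giving $j = \beta_1\varepsilon_1(h)\beta_2(e)$.

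I expect the main obstacle to be (\ref{subfactor 4}). The naive approach — extending $(h_1, h_2)$ to a triple $(h_1, e, h_2)$ in the $G'_\bullet$-fiber product and lifting to $G$ — fails because there is no reason for $\beta_1(h_1) \in J$ to lie in the image of $\beta_2$. The remedy is a twist correction: apply $G_\bullet$-factorization to $\beta_1(h_1)$ to write it as $\beta_2(e_0)\beta_1\varepsilon_1(h_0)$ for some $e_0 \in E$, $h_0 \in H$, and replace $(h_1, h_2)$ by the twisted pair $(h_1 \varepsilon_1(h_0)^{-1},\, h_2 \varepsilon_2(h_0)^{-1})$. A direct check shows that together with $e_0$ this now lies in the $G'_\bullet$-fiber product, so $G'_\bullet$-intersection produces some $g \in G$, and untwisting via $h := \delta_1(g) \cdot h_0 \in H$ recovers an element with $\varepsilon_i(h) = h_i$ for $i = 1, 2$, as required.
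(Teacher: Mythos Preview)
Your proposal is correct and matches the paper's proof essentially step for step in all six parts, including the key twist correction in part~(\ref{subfactor 4}) and the use of the diamond condition on $H_\bullet$ (injectivity of $(\varepsilon_1,\varepsilon_2)$) to finish part~(\ref{subfactor 3}). There is nothing substantive to add.
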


\begin{proof}
\begin{prooflist}
\!\!\!\!\item
Assume that
$G_\bullet'$ has the factorization property. Let $h_0 \in H_0$.
By hypothesis,
we may write
$(1, h_0) \in J \times H_0$ as
$(\beta_1,\gamma_1)(h_1)\cdot(\beta_2(e),\gamma_2(h_2))$ for some elements
$h_1 \in H_1$ and $(e, h_2) \in E \times H_2$.  In particular, $\gamma_1(h_1)\gamma_2(h_2) = h_0$,
hence $H_\bullet$ has the factorization property.

\item
We now additionally assume that $H_\bullet$ has the
intersection property. To see that $G_\bullet$ has the
factorization property, suppose that $j \in J$ and consider $(j,1) \in J \times H_0$.
Using factorization for $G'_\bullet$, we may
find $h_1 \in H_1$ and $(e, h_2) \in E\times H_2$  such that $j =
\beta_1(h_1)\beta_2(e)$ and $1 = \gamma_1(h_1)\gamma_2(h_2)$. Since by the latter equality, $h_1$ and $h_2^{-1}$ have the same
image in $H_0$,
the intersection hypothesis for
$H_\bullet$ implies that there exists
$h \in H$ with
$\varepsilon_1(h) = h_1$ and $\varepsilon_2(h) = h_2^{-1}$.  Hence
$j = \beta_1\varepsilon_1(h) \beta_2(e)$, which proves factorization for 
$G_\bullet$ (see Lemma~\ref{factor ordering}).

\item
Suppose that $G_\bullet'$ has the
intersection property, and let $(e, h) \in
E \times H$ satisfy
$\beta_2(e)=\beta_1\varepsilon_1(h)$. We wish to show that
$e$ and $h$ are the images of a single
element $g \in G$. To see this, we first note that the
elements $\varepsilon_1(h) \in H_1$ and
$(e, \varepsilon_2(h)) \in E \times H_2$
have the same image in $J\times H_0$. Since $G'_\bullet$ has the
intersection property, we may find $g
\in G$ so that
$\varepsilon_1\delta_1(g) = \varepsilon_1(h)$, $\delta_2(g) = e$,
and $\varepsilon_2\delta_1(g)=\varepsilon_2(h)$.  But
$(\varepsilon_1,\varepsilon_2)$ is injective, since $H_\bullet$ is assumed to be
a diamond.  It follows that $\delta_1(g) =
h$, and $g$ is as desired.

\item
Suppose that $G'_\bullet$ has the intersection property
and $G_\bullet$ has the factorization property. 
Assume that $h_0 =
\gamma_1(h_1) = \gamma_2(h_2)$ with $h_i \in H_i$. We would like to show that
$h_1,h_2$ are the images in $H_1,H_2$ of some element of $H$. Using the
factorization property for $G_\bullet$, we may write
$\beta_1(h_1) \in J$ as
$ \beta_2(e)\cdot \beta_1\varepsilon_1(h)$ with
$e \in E$, $h \in H$.
Let $h_1' = h_1 \varepsilon_1(h)^{-1} \in H_1$ and
$h_2' = h_2 \varepsilon_2(h)^{-1} \in H_2$.  Thus $h_1',h_2'$ have the same image in $H_0$
under $\gamma_1, \gamma_2$ respectively,
viz.\ the element $h_0' := h_0 \,\gamma_1\varepsilon_1(h)^{-1} = h_0 \,\gamma_2\varepsilon_2(h)^{-1}$.
Moreover $\beta_1(h_1') = \beta_2(e)$.
Consider the images of $h_1' \in H_1$ and of $(e,h_2')\in E\times H_2$ in
$J\times H_0$. These are
$(\beta_1, \gamma_1)(h_1')$ and $(\beta_2(e), \gamma_2(h_2'))$, which by the
previous considerations are equal.

Since
$G'_\bullet$ has the intersection property, there exists an element $g \in G$
for which $\varepsilon_1\delta_1(g) = h_1'$
and $\varepsilon_2\delta_1(g) = h_2'$.
So $\delta_1(g) \in H$ maps to $h_1' \in H_1$ and $h_2' \in H_2$
under $\varepsilon_1,\varepsilon_2$.
Thus $h_1 \in H_1$ and $h_2 \in H_2$
are the images of the common element $\delta_1(g) h \in H$.

\item
Suppose $h_1 \in H_1$ and $(e,h_2) \in E \times H_2$
satisfy
$(\beta_1(h_1),\gamma_1(h_1))
= (\beta_2(e),\gamma_2(h_2))$.
The intersection property for $H_\bullet$ yields an element $h \in H$ such that
$\varepsilon_i(h)=h_i$ for $i=1,2$. The intersection property for $G_\bullet$
then yields an element $g \in G$ such that  $\delta_2(g)=e$ and
$\delta_1(g)=h$.
Thus $\varepsilon_1\delta_1(g)=\varepsilon_1(h)=h_1$ and
$(\delta_2,\varepsilon_2\delta_1)(g)=(e,h_2)$; i.e.\
$h_1$ and $(e,h_2)$ are the images of the common element $g \in G$.

\item
Let $(j,h_0) \in  J \times H_0$.  By factorization
for $H_\bullet$, there exist $h_i \in H_i$ for $i=1,2$, such that
$h_0=\gamma_1(h_1)\gamma_2(h_2)$.
By factorization for $G_\bullet$ and Lemma~\ref{factor ordering},
there exist
$h \in H$ and $e \in E$ such that
$\beta_1(h_1)^{-1}j \in J$ equals
$\beta_1\varepsilon_1(h)\cdot \beta_2(e)$;
i.e.\
$j = \beta_1(h_1\varepsilon_1(h))\cdot \beta_2(e) \in J$.
Moreover $h_0 = \gamma_1(h_1)\gamma_2(h_2) =
\gamma_1(h_1)\gamma_1\varepsilon_1(h)\gamma_2\varepsilon_2(h)^{-1}\gamma_2(h_2) =
\gamma_1(h_1\varepsilon_1(h))\gamma_2(\varepsilon_2(h)^{-1}h_2)$.
Thus the elements
$h_1\varepsilon_1(h) \in H_1$ and $(e,\varepsilon_2(h)^{-1}h_2) \in E \times H_2$
provide a factorization of $(j,h_0) \in J \times H_0$.
\end{prooflist}
\vspace{-8.5mm}
\end{proof}

The following result will be useful in conjunction with the above proposition.

\begin{lem} \label{enlarging diamonds}
\renewcommand{\theenumi}{\alph{enumi}}
\renewcommand{\labelenumi}{(\alph{enumi})}
\begin{enumerate}
\item \label{product diamond}
Let $H^{(j)}_\bullet = (H^{(j)},H^{(j)}_1,H^{(j)}_2,H^{(j)}_0)$ be a diamond of groups for each $j$.
Write $H = \prod H^{(j)}$ and $H_i = \prod H^{(j)}_i$ for $i=0,1,2$, and
let $H_\bullet = (H,H_1,H_2,H_0)$, together with the products of the maps defining the diamonds $H^{(j)}_\bullet$.  Then $H_\bullet$ is a diamond, and it satisfies intersection (resp.\ factorization) if and only if each $H^{(j)}_\bullet$ does.
\item \label{extend diamond}
Let $\til H_\bullet = (\til H,\til H_1,\til H_2,\til H_0)$ be a diamond of groups, with associated maps 
$\alpha_i:\til H \to \til H_i$ and $\beta_i:\til H_i \to \til H_0$ for $i=1,2$.  
Let $A$ be a group and write $H = \til H \times A$, $H_1 = \til H_1 \times A$, and 
$H_i=\til H_i$ for $i=0,2$.
Then $H_\bullet := (H,H_1,H_2,H_0)$ is a diamond with respect to the maps 
$\alpha_1\times\id, \alpha_2 \circ \pr_1, \beta_1 \circ \pr_1, \beta_2$, where 
$\pr_1$ is the first projection map.  Moreover $H_\bullet$ satisfies 
intersection (resp.\ factorization) if and only if $\til H_\bullet$ does.
\end{enumerate}
\end{lem}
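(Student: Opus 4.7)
The plan is to handle each part by unwinding the definitions and exploiting the fact that fiber products of groups commute with finite direct products, reducing every claim to the corresponding claim for the input diamond(s).

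For part~(\ref{product diamond}), the key observation is the natural identification
\[ H_1 \times_{H_0} H_2 \;=\; \prod_j \bigl(H_1^{(j)} \times_{H_0^{(j)}} H_2^{(j)}\bigr), \]
under which the canonical map $(\alpha_1,\alpha_2)\colon H\to H_1\times_{H_0}H_2$ is identified with the product over $j$ of the maps $(\alpha_1^{(j)},\alpha_2^{(j)})\colon H^{(j)} \to H_1^{(j)}\times_{H_0^{(j)}}H_2^{(j)}$. Since a product of set maps is injective (resp.\ bijective) if and only if each factor is, one obtains both the diamond property for $H_\bullet$ and the equivalence of intersection properties. For factorization, one has $\beta_1(H_1)\beta_2(H_2) = \prod_j \beta_1^{(j)}(H_1^{(j)})\,\beta_2^{(j)}(H_2^{(j)})$ inside $H_0 = \prod_j H_0^{(j)}$, and this product equals $H_0$ if and only if equality holds componentwise.

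For part~(\ref{extend diamond}), the main observation is that $A$ admits no structural map to $H_0 = \til H_0$, so
\[ H_1 \times_{H_0} H_2 \;=\; (\til H_1 \times A)\times_{\til H_0} \til H_2 \;\cong\; \bigl(\til H_1 \times_{\til H_0} \til H_2\bigr)\times A, \]
where the isomorphism sends $((x,a),y)$ to $((x,y),a)$. Under this identification the canonical map $H \to H_1\times_{H_0}H_2$ becomes $(\alpha_1,\alpha_2)\times \id_A\colon \til H\times A \to \bigl(\til H_1\times_{\til H_0}\til H_2\bigr)\times A$. Being a monomorphism (resp.\ an isomorphism) is plainly preserved and reflected under producting with $\id_A$, giving the diamond property and the equivalence of intersection. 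For factorization, the image of $H_1 = \til H_1\times A$ in $H_0=\til H_0$ under $\beta_1\circ \pr_1$ is exactly $\beta_1(\til H_1)$, while the image of $H_2=\til H_2$ is $\beta_2(\til H_2)$, so factorization for $H_\bullet$ is literally the statement of factorization for $\til H_\bullet$.

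Both parts of the lemma are essentially bookkeeping once the relevant identifications of fiber products are in hand; the only point requiring care is verifying that the identifications are natural with respect to the specified structure maps of the diamonds, which is immediate from the explicit descriptions above.
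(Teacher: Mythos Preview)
Your proof is correct. For part~(\ref{product diamond}) your argument is essentially the paper's (which simply declares it immediate). For part~(\ref{extend diamond}) you give a direct verification, computing $H_1\times_{H_0}H_2\cong(\til H_1\times_{\til H_0}\til H_2)\times A$ and checking the structure maps by hand; the paper instead observes that $H_\bullet$ is literally the product diamond $\til H_\bullet\times(A,A,1,1)$ and then applies part~(\ref{product diamond}) together with the trivial fact that $(A,A,1,1)$ is a diamond satisfying both intersection and factorization. The paper's route is slightly slicker in that it avoids repeating the fiber-product computation, while your direct check makes the structure maps fully explicit; both amount to the same underlying bookkeeping.
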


Here part~(\ref{product diamond}) is immediate, and part~(\ref{extend diamond})
then follows by applying part~(\ref{product diamond}) to the two diamonds 
$\til H_\bullet$ and $(A,A,1,1)$.

\section{Patches and their fields} \label{patches}

In this section, we apply the refinement principle, Proposition~\ref{subfactor}, to fields arising from curves over complete discretely valued fields.  
In that situation, it was shown in \cite{HHK} that patching holds for diamonds arising from a partition of the closed fiber of a normal projective model of the curve into finitely many closed points and irreducible open sets.  
Here we show the same holds for more general partitions of the closed fiber (Proposition~\ref{big patch ok}); for partitions of a connected open subset of the closed fiber (Proposition~\ref{local patching}); and for partitions of the exceptional divisor of a blow-up (Proposition~\ref{blowup patch}).  
The second of these can be regarded as an assertion about 
``patching on patches.''  Related results for factorization with respect to  algebraic groups, which will be used in Section~\ref{applications}, appear 
in Section~\ref{fact quad subsect}.

For the sake of the applications in Section~\ref{applications}, we will need to consider reducible open sets in our partitions, in order to be able to treat branched covers that have reducible closed fibers over a given open subset of the base. 
This will require us first to generalize somewhat the framework that was considered in \cite{HH:FP} and \cite{HHK}, where the open sets had been assumed to be irreducible.  (See also \cite{Cuong}, where a similar generalization is considered.)   

\subsection{Setup} \label{setup}

Consider a complete discrete valuation ring $T$ with uniformizer $t$,
residue field $k$, and fraction field $K$.  Let $F$ be a one-variable
function field over $K$, and let $\wh X$ be a \textit{normal model} of
$F$, i.e.\ a normal connected projective $T$-curve with function field
$F$. Let $X$ be the reduced closed fiber of $\wh X$.

The following definition generalizes the notation in \cite[Section~6]{HH:FP}
and \cite{HHK}, where the open sets of $X$ were required to be irreducible.
\begin{defn} \label{patch ring def}
For a point $P \in X$ we let $\mc O_{\wh X, P}$ be its local ring, consisting of
the elements of $F$ that are regular at $P$. For a nonempty strict open
subset $W
\subset X$, we define
\(\displaystyle R_W = \bigcap_{P \in W} \mc O_{\wh X, P},\)
and we let $\wh R_W$ be the $t$-adic completion of $R_W$. If $\wh R_W$ is a domain
we also define $F_W$ to be its fraction field.
\end{defn}

Thus $R_W$ is the subring of $F$ consisting of the rational functions on $\wh X$ that are regular at each point of $W$.  The above definition agrees with those in \cite{HH:FP} and \cite{HHK}, which considered $R_W$ and $\wh R_W$ only when $W$ meets just one irreducible component of $X$.
For $W$ an affine open subset of $X$, we view $\Spec(\wh R_W)$ as a ``thickening'' of $W$, just as $\wh
X$ is a ``thickening'' of its closed fiber $X$.  See
Lemma~\ref{lemma_tilde}(\ref{closed_fiber_points-affine}) below.
By convention, we also write $F_X:=F$. If we have a second curve $\wh X'$ with
function field $F'$, we will write $R_W'$, $\wh R_W'$, and $F_W'$ for the analogously
defined 
rings (where $W$ is now a nonempty strict open subset of the closed
fiber $X'$ of $\wh X'$).

\begin{rem} \label{RW remark}
\renewcommand{\theenumi}{\alph{enumi}}
\renewcommand{\labelenumi}{(\alph{enumi})}
\begin{enumerate}
\item \label{normal patch rings remark}
For $W$ a non-empty open subset of $X$ as above,
and for any point $P \in W$, the completion of $\wh R_W$
at the ideal defined by $P \in \wh X$ is the complete local ring
of $\wh X$ at $P$, which is a normal domain.
Moreover, since $R_W$ is normal and reduced (i.e.\ has no nilpotents),
the same conditions hold for its
completion $\wh R_W$ (by \cite{Mat}, 33.I, 34.A); this uses that $T$
and hence $R_W$ is excellent (by \cite{Mat}, 34.B, 34.A).
\item \label{contraction remark}
When studying rings of the form $\wh R_W$ on normal models of $F$, it suffices to restrict attention to affine open sets $W$.
This is for the following reason:
Given a collection of some but not all irreducible components of $X$,
by \cite[Sect.~6.7, Proposition~4]{BLR:Neron} there is
an associated contraction $\pi:\wh X \to \wh Y$.  This is a projective
birational $T$-morphism $\pi$ to a normal model $\wh Y$ of $F$ over $K$
that is an isomorphism away from these components, and which sends
each of these components to a point.  By the description of $\wh Y$
given in \cite[Sect.~6.7, Theorem~1]{BLR:Neron},
if $U$ is an open set strictly contained in the closed fiber $Y$ of $\wh Y$,
and $W = \pi^{-1}(U)$, then the $T$-morphism $\pi$ induces a $T$-algebra
isomorphism between $R_W$ (taken on $\wh X$) and $R_U$
(taken on $\wh Y$).
This in turn induces an isomorphism between the $t$-adic completions $\wh R_W$
and $\wh R_U$.
In particular, let $W$ be any connected open set strictly contained in $X$, and let $\pi$ be chosen to contract precisely those components of $X$ that are contained in $W$.  Then $\wh R_W = \wh R_U$ for $U=\pi(W)$, and moreover $U$ is affine.
\end{enumerate}
\end{rem}

With $W \subset X$ as above, the reduced closed fiber of $\Spec(R_W)$
is $\Spec(R_W/J)$, where $J$ is the Jacobson radical of $R_W$
(i.e.\ the radical of the ideal $tR_W$).  The corresponding statement
holds for $\Spec(\wh R_W)$.

\begin{lemma} \label{lemma_tilde}
In the above situation, let $W$ be a non-empty open subset of $X$.
\renewcommand{\theenumi}{\alph{enumi}}
\renewcommand{\labelenumi}{(\alph{enumi})}
\begin{enumerate}
\item \label{closed_fiber_points-affine}
If $W$ is an affine open subset of $X$, then the reduced closed fibers of $\Spec(R_W)$ and $\Spec(\wh R_W)$ are each isomorphic to $W$.
\item \label{closed_fiber_points-general}
More generally, if $W \ne X$, then the reduced closed fibers of $\Spec(R_W)$ and $\Spec(\wh R_W)$ are each isomorphic to $\pi(W)$, where $\pi$ is the contraction of $\wh X$ with respect to the irreducible components of $X$ that are contained in $W$.
\end{enumerate}
\end{lemma}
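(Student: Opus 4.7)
The plan is to prove (a) directly by identifying $R_W$ with global sections on an affine open of $\wh X$, handle the completion assertion by a standard completion-commutes-with-reduction argument, and then deduce (b) from (a) via the contraction of Remark~\ref{RW remark}(b).

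For part (a), let $W \subset X$ be a non-empty affine open subset. Note first that $W \subsetneq X$, since $X$ is a one-dimensional projective $k$-scheme (the closed fiber of a flat projective $T$-curve) and hence not itself affine. Setting $Z := X \setminus W$, a finite set of closed points of $\wh X$, define $U := \wh X \setminus Z$, an open subscheme of $\wh X$ whose reduced closed fiber is visibly $W$. Since $\wh X$ is a projective normal $T$-curve and $Z$ consists of finitely many closed points of the closed fiber, a standard argument (as used in \cite{HH:FP} and \cite{HHK}) shows that $U$ is affine. The central step is to identify $R_W$ with $\Gamma(U, \mc O_{\wh X})$: the inclusion $\mc O_{\wh X}(U) \subseteq R_W$ is immediate since $W \subseteq U$, while for the reverse inclusion I would use normality of $\wh X$. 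An element of $F$ regular at every point of $W$ is in particular regular at all codimension-one points of $U$ (namely the generic point of $\wh X$ and the generic points of the irreducible components of $X$ meeting $W$, which are exactly the components of $X$ not contracted to points of $Z$); the S2 property of the normal two-dimensional scheme $U$ then forces such an element to be regular on all of $U$. Thus $\Spec(R_W) \cong U$, whose reduced closed fiber is $W$ by construction. For the statement about $\wh R_W$, observe that $\wh R_W / t \wh R_W \cong R_W / t R_W$ (a basic property of $t$-adic completion), so the reductions modulo nilpotents coincide and the reduced closed fibers of $\Spec(R_W)$ and $\Spec(\wh R_W)$ are canonically isomorphic.

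For part (b), let $\pi: \wh X \to \wh Y$ be the contraction with respect to those irreducible components of $X$ entirely contained in $W$, and set $U := \pi(W)$. First I would verify $\pi^{-1}(U) = W$: points in a contracted component are already in $W$ by choice of $\pi$, while $\pi$ restricts to an isomorphism away from the contracted locus, so any point of $\pi^{-1}(U)$ in a non-contracted component is the unique preimage of an element of $\pi(W)$ and must itself lie in $W$. Remark~\ref{RW remark}(b) then gives canonical isomorphisms $R_W \cong R_U$ and $\wh R_W \cong \wh R_U$. Next, $Y \setminus U$ is the image under $\pi$ of $X \setminus W$ intersected with the non-contracted components, which is a finite set of closed points of $Y$; thus $U$ is affine in $\wh Y$ by the same argument as in (a). Applying (a) to the affine open $U \subset Y$ of the normal projective $T$-curve $\wh Y$ identifies the reduced closed fibers of $\Spec(R_U)$ and $\Spec(\wh R_U)$ with $U = \pi(W)$, which via the above isomorphisms yields the claim for $R_W$ and $\wh R_W$.

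The main technical obstacle is the equality $R_W = \mc O_{\wh X}(U)$ in part (a), since it requires passing from regularity at the (possibly two-dimensional) local rings indexed by $W$ to regularity on the two-dimensional scheme $U$; this rests on the normality of $\wh X$ and the fact that $U$ has no codimension-one points outside of those already controlled by $W$ together with the generic point. A secondary subtlety is confirming affineness of $U$ in both parts, which, while standard in this setting, is not automatic from the projective flat structure of $\wh X$ alone.
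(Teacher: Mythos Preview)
Your reduction of (b) to (a) via Remark~\ref{RW remark}(\ref{contraction remark}) and your passage from $R_W$ to $\wh R_W$ using that they have the same reduction modulo $t$ are both correct and are exactly what the paper does. The problem is your argument for $R_W$ itself in part~(a): both of the key claims there fail.

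Take $\wh X=\mbb P^1_T$, $W=\mbb A^1_k$, $Z=\{\infty_k\}$, so $U=\mbb P^1_T\smallsetminus\{\infty_k\}$. Covering $U$ by $\Spec T[x]$ and $\Spec T[1/x]\smallsetminus\{\infty_k\}$ and using Hartogs on the second chart, one finds $\Gamma(U,\mc O_{\wh X})=T[x]\cap T[1/x]=T$; in particular $U$ is \emph{not} affine. Moreover $x\in R_W$ (it lies in every $\mc O_{\wh X,P}$ for $P\in\mbb A^1_k$) but $x\notin T=\Gamma(U,\mc O_{\wh X})$, so $R_W\ne\mc O_{\wh X}(U)$. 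The error in your S2 argument is that the codimension-one points of $U$ are not just the generic points of components of $X$: every horizontal prime divisor of $\wh X$ meeting $U$ (e.g.\ the section at infinity in the example) contributes a codimension-one point whose local ring is not controlled by regularity along $W$.

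A repair is to work not with $U=\wh X\smallsetminus Z$ but with a genuinely affine open $V\subset\wh X$ satisfying $V\cap X=W$, obtained by removing the support of an effective \emph{Cartier divisor} on $\wh X$ whose intersection with $X$ is $Z$ (in the example, the $\infty$-section, giving $V=\Spec T[x]$). Even then $R_W\supsetneq A:=\mc O_{\wh X}(V)$ in general, but $R_W$ is precisely $\bigcap_{\mf p\ni t}A_{\mf p}$, the localization of $A$ at the multiplicative set of elements that are units modulo $t$; hence $R_W/tR_W\cong A/tA$ and the reduced closed fibers agree. This is presumably what the paper's ``clear'' is encoding.
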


\begin{proof}
(\ref{closed_fiber_points-affine})
The assertion for $R_W$ is clear, and it then follows for $\wh R_W$ because
$R_W$ and $\wh R_W$ have the same reduction modulo $(t)$.

(\ref{closed_fiber_points-general})
Note that $\pi(\tilde W)$ is open because the only components of $W$ that are contracted by $\pi$ are those contained in $W$.  Also
$R_W = R_{\pi(W)}$, as observed in Remark~\ref{RW remark}(\ref{contraction remark}). So the assertion follows.
\end{proof}

\begin{prop} \label{connec_conds}
Let $W$ be a nonempty proper open subset of $X$, the closed fiber of $\wh X$.  Let $W_1,\dots,W_n$ be the connected components of $W$.
\renewcommand{\theenumi}{\alph{enumi}}
\renewcommand{\labelenumi}{(\alph{enumi})}
\begin{enumerate}
\item \label{patch ring connec}
The ring $\wh R_W$ is a domain (and so $F_W$ is defined) if and only if
$W$ is connected.
\item \label{patch ring structure}
The ring $\wh R_W$ is isomorphic to $\prod_{i=1}^n \wh R_{W_i}$.
\item \label{extension structure}
Let $F'$ be a finite extension of $F$, and let $\wh X'$ be the normalization of $\wh X$ in $F'$,
with associated morphism $\pi:\wh X' \to \wh X$ and closed fiber $X'$.
Let $W' = \pi^{-1}(W) \subset X'$. 
Then $\wh R_W \otimes_{R_W} R_{W'}'$
is isomorphic to $\wh R_{W'}' = \prod_{j=1}^n \wh R_{W'_j}'$,
where $W'_1,\dots,W'_n$ are the connected components of $W'$.
If $W$  is connected, 
$F_W \otimes_F F'$ is isomorphic to $\prod_{j=1}^n F'_{W'_j}$.
\end{enumerate}
\end{prop}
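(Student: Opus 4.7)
The plan is to prove the three parts in order, each building on the previous. For~(a), after applying the contraction from Remark~\ref{RW remark}(b) I may assume $W$ is affine, so that Lemma~\ref{lemma_tilde} identifies the reduced closed fiber of $\Spec(\wh R_W)$ with $W$. Because $\wh R_W$ is Noetherian and $t$ lies in its Jacobson radical, the connected components of $\Spec(\wh R_W)$ coincide with those of this reduced closed fiber. Combining this with the normality from Remark~\ref{RW remark}(a) yields the forward direction, since a connected normal Noetherian ring is a domain. For the converse, if $W$ is disconnected I would lift the resulting orthogonal idempotents from $R_W/\sqrt{tR_W}$ to $\wh R_W$ via standard idempotent lifting in a $t$-adically complete ring, producing nontrivial idempotents in~$\wh R_W$.

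For~(b), the same lifted idempotents $e_1,\ldots,e_n\in\wh R_W$ decompose $\wh R_W = \prod_{i=1}^n e_i\wh R_W$. Each summand $e_i\wh R_W$ is $t$-adically complete and normal with reduced closed fiber $W_i$, hence is a domain by~(a). To identify $e_i\wh R_W$ with $\wh R_{W_i}$, I would use the inclusion $R_W \hookrightarrow R_{W_i}$ of subrings of $F$ to obtain a map $\wh R_W \to \wh R_{W_i}$. Since $\wh R_{W_i}$ is a domain, its only idempotents are $0$ and $1$; matching on the reduced closed fiber then forces $e_i \mapsto 1$ and $e_j \mapsto 0$ for $j\ne i$, so the map factors through $\psi_i: e_i\wh R_W \to \wh R_{W_i}$. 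The main obstacle is verifying that $\psi_i$ is an isomorphism: I would do this by checking it modulo each $t^n$ and passing to the inverse limit, using that both rings represent the same $t$-adic thickening of $W_i$ inside~$\wh X$.

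For~(c), since $\pi:\wh X'\to\wh X$ is finite (as the normalization of $\wh X$ in a finite extension), $R_{W'}'$ is a finite $R_W$-module, so completion commutes with tensor product and $\wh R_W\otimes_{R_W}R_{W'}' \cong \wh R_{W'}'$; applying~(b) to $W'\subset X'$ then yields $\wh R_{W'}' = \prod_{j=1}^n \wh R_{W'_j}'$. For the final assertion, assuming $W$ is connected so that $F_W = \Frac(\wh R_W)$ is defined, I would tensor both sides with $F_W$ over $\wh R_W$: on the left, $F_W\otimes_{R_W} R_{W'}' = F_W\otimes_F (F\otimes_{R_W} R_{W'}') = F_W\otimes_F F'$, using that $R_{W'}'$ is finite over $R_W$ with fraction field $F'$; on the right, each factor becomes $\Frac(\wh R'_{W'_j}) = F'_{W'_j}$, since each $\wh R'_{W'_j}$ is a domain by~(a).
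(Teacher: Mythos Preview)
Your proposal is correct and closely parallels the paper's argument, with one organizational difference worth noting. The paper proves part~(b) first, directly: writing $J = \sqrt{tR_W}$ and $J_i$ for the ideals cutting out the components of the reduced closed fiber, it applies the Chinese Remainder Theorem to get $R_W/J^n \cong \prod_i R_{W_i}/J_i^n R_{W_i}$ for all $n$, and then passes to the inverse limit. The forward direction of (a) then drops out immediately from (b), and the reverse direction of (a) is handled by exactly your connectedness argument (every closed point of $\Spec(\wh R_W)$ lies in the closed fiber, so a clopen decomposition upstairs restricts nontrivially downstairs). Your route instead establishes (a) first and then builds (b) from lifted idempotents; the step you flag as the ``main obstacle''---identifying $e_i\wh R_W$ with $\wh R_{W_i}$ by checking modulo each $t^n$---is precisely where the paper's CRT computation lives, so the two arguments converge there. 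The paper's ordering is slightly more economical because the CRT decomposition hands you the factor identification for free, without needing to track where the idempotents go under $\wh R_W \to \wh R_{W_i}$.

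For part~(c) your argument and the paper's are essentially identical: both invoke that completion commutes with base change along the finite extension $R_W \subseteq R'_{W'}$ (the paper cites Bourbaki~III, \S3.4, Th\'eor\`eme~3(ii)), and then localize. The paper makes explicit the step you abbreviate, namely that $F_W \otimes_{\wh R_W} \wh R'_{W'_j}$ is a domain finite over the field $F_W$, hence itself a field, hence equal to its fraction field $F'_{W'_j}$.
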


\begin{proof}
We begin with part~(\ref{patch ring structure}).
By Lemma~\ref{lemma_tilde}(\ref{closed_fiber_points-general}),
the reduced closed fiber of $\Spec(R_W)$ is the disjoint union of
the reduced closed fibers of $\Spec(R_{W_i})$, for $i=1,\dots,r$.
Thus the ideal $J \subset R_W$ defining the former closed fiber is the
product of the relatively prime ideals $J_i \subset R_W$ defining the latter;
and more generally $J^n=\prod_{i=1}^r J_i^n$ for all $n$.
The Chinese Remainder Theorem implies that
$R_W/J^n =  \prod_{i=1}^r R_W/J_i^n
= \prod_{i=1}^r R_{W_i}/J_i^n R_{W_i}$ for all $n$.
Since $J$ is the radical of the ideal $(t) \subset R_W$, and similarly for $J_i$ and $R_{W_i}$,
the asserted isomorphism follows by passing to the inverse limit.

The forward direction of part~(\ref{patch ring connec}) is now immediate.
For the reverse implication of~(\ref{patch ring connec}), note that
the condition that $\wh R_W$ is a domain is equivalent to its spectrum being reduced and irreducible.  It was observed in
Remark~\ref{RW remark}(\ref{normal patch rings remark})
that $\wh R_W$ (or equivalently its spectrum) is reduced and normal.  Moreover
a normal scheme is irreducible if and only if it is connected.
Thus it suffices to show that $\Spec(\wh R_W)$ is connected.

So suppose that $\Spec(\wh R_W)$ is the disjoint union of two Zariski open subsets $Y_1$ and $Y_2$.  We wish to show that $Y_1$ or $Y_2$ is empty.
First note that $\Spec(\wh R_W/(t))$ is the disjoint union of the two Zariski open subsets $\bar Y_1$ and $\bar Y_2$, the restrictions of $Y_1$ and $Y_2$ to $\Spec(\wh R_W/(t))$.
Also, $\Spec(\wh R_W/(t))$ is connected since $W$ and hence $\pi(W)$ is, using Lemma~\ref{lemma_tilde}(\ref{closed_fiber_points-general}).  So either $\bar Y_1$ or $\bar Y_2$ is empty.
But any maximal ideal of $\wh R_W$ contains $t$, and so any closed point of $\Spec(\wh R_W)$ (including any closed point of $Y_i$) lies on $\Spec(\wh R_W/(t))$.  Hence if $Y_i$ is non-empty, then so is $\bar Y_i$.  Thus either $Y_1$ or $Y_2$ is empty, concluding the proof of~(\ref{patch ring connec}).

Finally, we prove part~(\ref{extension structure}).
The map $\wh R_W \otimes_{R_W} R_{W'}' \to \wh R_{W'}'$ is an isomorphism
by \cite[III, \S 3.4, Theorem 3(ii)]{Bo:CA}.  For the second assertion, where $W$ is connected,
write $S = \wh R_W^\times$.  Then the localization
$S^{-1}\wh R'_{W'_j} = F_W \otimes_{\wh R_W} \wh R'_{W'_j}$
is a domain that is a finite extension of $F_W$.
Thus it is a field, and is equal to its fraction field $F'_{W'_j}$.
So $F_W \otimes_F F' = F_W \otimes_{R_W} R_{W'}'
= F_W \otimes_{\wh R_W} \wh R_W \otimes_{R_W} R_{W'}'
= F_W \otimes_{\wh R_W} \wh R_{W'}'
= F_W \otimes_{\wh R_W} \prod \wh R'_{W'_j}
= \prod S^{-1}\wh R'_{W'_j}
= \prod F'_{W'_j}$.
\end{proof}

The above definition of $\wh R_W$ requires that $W$ is non-empty.  But if the
closed fiber $X$ of $\wh X$ is irreducible with generic point $\eta$, then we
define $\wh R_\varnothing$ to be $\wh R_\eta$.  In this situation note that
the equivalence in Proposition~\ref{connec_conds} still holds.

The fields $F_W$ arise in particular when considering a finite morphism $f:\wh
X \to \wh X'$ of projective normal $T$-curves, corresponding to a finite field
extension $F/F'$.
If $U$ is a non-empty connected open subset of the closed fiber $X'$ of $\wh X'$, then
by Proposition~\ref{connec_conds}(\ref{extension structure}) the tensor product
$F'_U \otimes_{F'} F$ is a product of finitely many fields, each of them of
the form $F_W$ for some open subset $W \subseteq X$.
Namely, these sets $W$ are the connected components of $f^{-1}(U) \subseteq X$.
Here the sets $W$ can each meet more than one irreducible component of $X$,
even if $U$ meets just one irreducible component of $X'$.

In the other direction, consider a finite separable extension $E_U$ of the field $F'_U$, where $F' = K(x)$ is the function field of the projective line $\wh X' = \mbb P^1_T$; where $U = \mbb A^1_k$; and where $F'_U$ is the patching field associated to $U$ on the closed fiber $X'$ of $\wh X'$.
Then according to the second part of the next result, there is a finite field
extension $F/F'$, corresponding to a finite morphism $f:\wh X \to \wh X'$ of
projective normal $T$-curves, such that $F \otimes_{F'} F'_U$ is
$F'_U$-isomorphic to $E_U$.
Hence $W := f^{-1}(U)$ is a connected affine open subset of the closed fiber
$X$ of $\wh X$, and $F_W$ is  $F'_U$-isomorphic to the given field $E_U$.

Before stating the proposition, recall some notation.  Let
$\wh X$ be a normal model of a one-variable function field $F$
over the complete discretely valued field $K$, and $X$ the closed fiber of $\wh X$.  
For each point $P \in X$, 
let $R_P$ be the local ring of $\wh X$ at $P$.  
Its completion $\wh R_P$ is a domain (\cite[page~88]{HH:FP}), with
fraction field denoted by $F_P$.
Each height one prime $\wp$ in $\wh R_P$ that contains the uniformizer $t$ of $K$ defines
a \textit{branch} of $X$ at $P$, lying on some irreducible component of $X$.
The $t$-adic
completion $\wh R_\wp$ of the local ring $R_\wp$ of $\wh R_P$ at $\wp$
is a complete discrete valuation ring; its fraction field is denoted by
$F_\wp$.  Hence $F_\wp$ contains $F_P$, and is its completion.  
The field $F_\wp$ also contains $F_U$ if $U$ is an
irreducible open subset of $X$ such that $P \in \bar U \smallsetminus U$.
If $\wh X'$ is another curve with function field $F'$ we will write
$\wh R'_{\wp}$, $\wh R'_P$, $F'_{\wp}$ etc.\ for the analogously defined objects. 

\begin{prop} \label{descend_exten}
Let $U = \mbb A^1_k$, let $P$ be the point $(x=\infty)$ on $\mbb P^1_k$,
and let $\wp$ be the unique branch of $\mbb P^1_k$ at $P$,
where $X=\mbb P^1_k$ is viewed as the closed fiber of $\wh X = \mbb P^1_T$.

\renewcommand{\theenumi}{\alph{enumi}}
\renewcommand{\labelenumi}{(\alph{enumi})}
\begin{enumerate}

\item \label{descend_branch}
For every finite separable field extension $E_\wp$ of $F_\wp$, there is a finite separable field extension $E_P$ of $F_P$ such that $E_P \otimes_{F_P} F_\wp \cong E_\wp$ over $F_\wp$.

\item \label{descend_big_patch}
For every finite separable field extension $E_U$ of $F_U$, there is a finite separable field extension $F'$ of $F:=K(x)$ such that $F' \otimes_F F_U \cong E_U$ over $F_U$.
Moreover if $\wh X'$ is the normalization of $\wh X$ in $F'$, with closed fiber $X'$ and associated morphism
$\pi: \wh X' \to \wh X$, then $F'\otimes_F F_U \cong F'_{U'}$ over $F_U$, where 
$U' = \pi^{-1}(U) \subset X'$ is connected. 
\end{enumerate}
\end{prop}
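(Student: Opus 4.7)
For part~(\ref{descend_branch}), I would use a standard Krasner's lemma argument.  The field $F_\wp$ is the fraction field of the complete discrete valuation ring $\wh R_\wp$ with uniformizer $t$, and $F_P$ is $t$-adically dense in $F_\wp$: indeed $R_\wp$ is dense in $\wh R_\wp$, and every element of $F_\wp$ has the form $a/t^k$ with $a \in \wh R_\wp$.  Writing $E_\wp = F_\wp[z]/(f_\wp(z))$ for some monic irreducible separable $f_\wp \in F_\wp[z]$, approximate $f_\wp$ coefficient-wise by a monic polynomial $f_P \in F_P[z]$ of the same degree.  For a sufficiently close approximation, continuity of roots together with Krasner's lemma yields that $f_P$ is irreducible over $F_\wp$ (hence over $F_P$) and defines the same field extension of $F_\wp$; thus $E_P := F_P[z]/(f_P(z))$ satisfies $E_P \otimes_{F_P} F_\wp \cong E_\wp$.

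For part~(\ref{descend_big_patch}), the plan is to combine part~(\ref{descend_branch}) with patching for the diamond $(F \le F_P, F_U \le F_\wp)$, which holds by \cite{HHK}.  Set $n = [E_U : F_U]$.  Since $E_U/F_U$ is separable, $E_\wp := E_U \otimes_{F_U} F_\wp$ is a finite étale $F_\wp$-algebra, and so decomposes as a product $\prod_i E_{\wp,i}$ of finite separable field extensions of $F_\wp$.  Applying part~(\ref{descend_branch}) to each factor produces extensions $E_{P,i}/F_P$ with $E_{P,i} \otimes_{F_P} F_\wp \cong E_{\wp,i}$, and the finite étale $F_P$-algebra $E_P := \prod_i E_{P,i}$ then comes equipped with an $F_\wp$-algebra isomorphism $E_P \otimes_{F_P} F_\wp \cong E_U \otimes_{F_U} F_\wp$ obtained as the product of the chosen isomorphisms, giving a patching datum.

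Viewing $E_P$ and $E_U$ as rank-$n$ free modules with their compatible algebra structure and applying patching (Theorem~\ref{patching prop thm} together with Example~\ref{other categories}), I obtain a commutative $F$-algebra $E$ of dimension $n$ with $E \otimes_F F_P \cong E_P$ and $E \otimes_F F_U \cong E_U$.  The crucial observation is that $E$ must itself be a field: a nontrivial product decomposition $E = E' \times E''$ would force $E_U = E \otimes_F F_U$ to decompose nontrivially, contradicting that $E_U$ is a field.  Setting $F' := E$ gives the desired finite separable extension of $F$ with $F' \otimes_F F_U \cong E_U$.  The moreover statement then follows from Proposition~\ref{connec_conds}(\ref{extension structure}) applied to the connected open $U$: the decomposition $F' \otimes_F F_U \cong \prod_j F'_{U'_j}$ indexed by the connected components $U'_j$ of $U' = \pi^{-1}(U)$ must have a single factor since $E_U$ is a field, so $U'$ is connected and $F'_{U'} \cong E_U$.

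I expect the main obstacle to be the step ensuring that the patched algebra $E$ is a field rather than merely an étale algebra; this rests on the fact that $E_U$ is a field and the observation that base change to $F_U$ detects ring decompositions of $E$.  A secondary point requiring care is the coherent assembly of the individual isomorphisms $E_{P,i} \otimes_{F_P} F_\wp \cong E_{\wp,i}$ into a single $F_\wp$-algebra isomorphism $E_P \otimes_{F_P} F_\wp \cong E_U \otimes_{F_U} F_\wp$, so that the glued data constitutes a genuine patching problem for algebras rather than only for free modules.
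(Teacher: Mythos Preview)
Your proposal is correct and follows essentially the same approach as the paper: Krasner's lemma for part~(\ref{descend_branch}), and for part~(\ref{descend_big_patch}) decomposing $E_U \otimes_{F_U} F_\wp$ into field factors, descending each via part~(\ref{descend_branch}), patching the resulting data (the paper cites \cite[Theorems~7.1 and~5.9]{HH:FP} rather than \cite{HHK}, but the content is the same), and then observing that the patched algebra is a field because $E_U$ is. Your anticipated obstacles are not genuine difficulties; the paper dispatches both in a single sentence each, exactly along the lines you indicate.
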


\begin{proof}
(\ref{descend_branch})
Since $F_\wp$ is the $\wp$-adic completion of $F_P$, this follows from Krasner's Lemma (\cite{La}, Prop.~II.2.3).

(\ref{descend_big_patch})
The tensor product $E_U \otimes_{F_U} F_\wp$ is a finite direct product $\prod_i E_{\wp,i}$ of finite separable field extensions $E_{\wp,i}$ of $F_\wp$.  By part~(\ref{descend_branch}), for each $i$ there is a finite separable field extension $E_{P,i}$ of $F_P$ such that $E_{P,i} \otimes_{F_P} F_\wp$ is isomorphic to $E_{\wp,i}$ over $F_\wp$.  We thus have an isomorphism of separable $F_\wp$-algebras
\[E_U \otimes_{F_U} F_\wp \to \bigl(\prod_i E_{P,i}\bigr) \otimes_{F_P} F_\wp.\]
Applying the patching assertion Theorem~7.1 of \cite{HH:FP} (in the context of Theorem~5.9 of that paper), we obtain a finite separable $F$-algebra $F'$ that induces $E_U$ over $F_U$ and induces $\prod_i E_{P,i}$ over $F_P$, compatibly with this isomorphism.  Since $E_U$ is a field, so is $F'$.

For the last part, $F'\otimes_F F_U \cong \prod_{j=1}^n F'_{U'_j}$ by
Proposition~\ref{connec_conds}(\ref{extension structure}), 
where $U'_1,\dots,U'_n$ are the connected components of $U'$.  But 
$F'\otimes_F F_U \cong E_U$ is a field.  So $n=1$ and the assertion follows.  
\end{proof}

As the above proof shows, Proposition~\ref{descend_exten} holds more generally for non-empty affine open subsets $U$ of the closed fiber $X$ of a smooth projective $T$-curve $\wh X$, together with the set of points $P \in X$ in the complement of $U$.  For this, one cites Theorem~7.1 of \cite{HH:FP} in the context of Theorem~5.10 of that paper.

In the case that $T$ is an {\em equal characteristic} complete discrete valuation ring, an analog of Proposition~\ref{descend_exten}(\ref{descend_big_patch}) for a finite extension of $F_P$ appeared in \cite[Lemma~3.8]{HHK:Weier}.  For a more general choice of $T$, 
there is the following weaker result, which nevertheless will suffice for our purposes below (in Corollary~\ref{extension blowup both cases}):

\begin{prop} \label{small patch blowup}
Let $\wh X$ be a projective normal $T$-curve, let $P$ be a closed point on the closed fiber $X$, and let $E$ be a finite separable extension of $F_P$.  Let $S$ be the integral closure of $\wh R_P$ in $E$, and let $\wh V^* \to \Spec(S)$ be a birational projective morphism, with $\wh V^*$ normal.  Then
there exist normal schemes $\wh V$, $\wh Z$, and $\wh Y$ and a commutative diagram
\[
\xymatrix @R=.7cm {
\wh V \ar[r] \ar[d] & \wh V^* \ar[r] & \Spec(S) \ar[d] \\
\wh Z \ar[rr] \ar[d] & & \Spec(\wh R_P)  \ar[d]\\
\wh Y \ar[rr] & & \wh X
}
\]
of $T$-schemes, where the horizontal maps are birational projective morphisms that are isomorphisms away from (the inverse image of) $P$; the bottom half of the diagram is a pullback square; and the morphism $\wh V \to \wh Z$ is finite.
\end{prop}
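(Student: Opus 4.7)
The plan is to apply Raynaud--Gruson flattening on $\Spec(\wh R_P)$ and then descend the resulting blow-up back to $\wh X$. First observe that the composition $f: \wh V^* \to \Spec(S) \to \Spec(\wh R_P)$ is projective with generic fiber $\Spec(E)$, which is zero-dimensional. Over the open $U := \Spec(\wh R_P) \smallsetminus \{P\}$, the first arrow is an isomorphism and the second is finite; moreover, $U$ is the punctured spectrum of the $2$-dimensional normal complete local domain $\wh R_P$, hence a $1$-dimensional regular scheme, so the finite torsion-free module $S$ restricts to a locally free (hence flat) sheaf on $U$. Thus $f$ is flat over $U$.

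By the Raynaud--Gruson flattening theorem, there exists a $U$-admissible blow-up $\wh Z_0 \to \Spec(\wh R_P)$ --- the blow-up along some $\mathfrak{m}_P$-primary ideal $\mathfrak b \subseteq \wh R_P$ --- such that the strict transform $\wh V_0 \subseteq \wh V^* \times_{\Spec(\wh R_P)} \wh Z_0$ of $\wh V^*$ is flat over $\wh Z_0$. The induced map $\wh V_0 \to \wh Z_0$ is proper (inherited from $f$), flat, and has zero-dimensional generic fiber $\Spec(E)$; since $\dim \wh V_0 = \dim \wh Z_0 = 2$, flatness forces every fiber to be zero-dimensional. Hence $\wh V_0 \to \wh Z_0$ is proper and quasi-finite, so finite.

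To descend the blow-up to $\wh X$, I use that the completion map $\mathcal O_{\wh X, P} \to \wh R_P$ induces isomorphisms $\mathcal O_{\wh X, P}/\mathfrak m_P^k \cong \wh R_P/\mathfrak m_P^k \wh R_P$ for every $k$. Since $\mathfrak b \supseteq \mathfrak m_P^k \wh R_P$ for some $k$, it lifts to an $\mathfrak m_P$-primary ideal $\til{\mathfrak b} \subset \mathcal O_{\wh X, P}$ with $\til{\mathfrak b}\wh R_P = \mathfrak b$. Extending $\til{\mathfrak b}$ trivially away from $P$ yields a coherent ideal sheaf on $\wh X$; let $\wh Y_0$ be its blow-up on $\wh X$. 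Since blow-ups commute with flat base change, $\wh Y_0 \times_{\wh X} \Spec(\wh R_P) = \wh Z_0$. Let $\wh Y$ and $\wh V$ be the normalizations of $\wh Y_0$ and $\wh V_0$, respectively. Because $\mathcal O_{\wh X, P}$ is excellent, the completion morphism is regular, so normality is preserved under base change along it; thus $\wh Y \times_{\wh X} \Spec(\wh R_P)$ is normal and finite birational over $\wh Z_0$, and so equals the normalization $\wh Z$ of $\wh Z_0$. The bottom square is then a pullback by construction, and $\wh V \to \wh Z$ is finite. The map $\wh V \to \wh V^*$ is projective, and an isomorphism away from the fiber over $P$ since $\wh Z_0 \to \Spec(\wh R_P)$ is an isomorphism over $U$ and $\wh V^*$ is already normal on the preimage of $U$. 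The main obstacle is the finiteness of $\wh V \to \wh Z$: Raynaud--Gruson converts generic flatness into flatness after a blow-up, and the dimension count combined with properness then upgrades this to finiteness.
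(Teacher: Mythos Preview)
Your argument is correct, and it takes a genuinely different route from the paper's proof. The paper first reduces to the case where $E/F_P$ is Galois with group $G$: it replaces $\wh V^*$ by the dominant component of the fiber product of its $G$-conjugates, obtaining a $G$-equivariant $\wh V \to \Spec(S)$, and then \emph{defines} $\wh Z := \wh V/G$, so that finiteness of $\wh V \to \wh Z$ is automatic. The descent to $\wh X$ is then carried out by decomposing $\wh Z \to \Spec(\wh R_P)$ into individual blowups and blowdowns and mirroring each step on $\wh X$, checking inductively that the exceptional loci and local generators match.

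Your approach bypasses the Galois reduction entirely. Raynaud--Gruson produces the blow-up $\wh Z_0$ directly, and the key point---that flat plus proper plus zero-dimensional generic fiber forces finiteness---replaces the quotient construction. Your descent is also cleaner: a single $\mathfrak m_P$-primary ideal, lifted via $\mathcal O_{\wh X,P}/\mathfrak m_P^k \cong \wh R_P/\mathfrak m_P^k\wh R_P$ and spread out as a coherent sheaf on $\wh X$, rather than an inductive sequence of centers. The cost is invoking Raynaud--Gruson, which is heavier machinery than anything the paper uses; the benefit is a shorter, more uniform argument that does not rely on the separability hypothesis except through finiteness of $S$ over $\wh R_P$ (which excellence already guarantees). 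One small point worth making explicit: the factorization of $\wh V \to \wh Z_0$ through the normalization $\wh Z$ uses that $\wh V$ is normal and dominates the integral scheme $\wh Z_0$, and finiteness of $\wh V \to \wh Z$ then follows since $\wh V \to \wh Z_0$ is finite and $\wh Z \to \wh Z_0$ is separated.
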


\begin{proof}
Let $L$ be the Galois closure of $E$ over $F_P$, let $G=\Gal(L/F_P)$, and let $R$ be the integral closure of $\wh R_P$ in $L$.  Let $H = \Gal(L/E)$ and let $\wh W^*$ be the normalization of $\wh V^* \times_S R$.
It suffices to prove the assertion with $E$, $S$, and $\wh V$ replaced by $L$, $R$, and $\wh W^*$, provided we also show that the $G$-action on $\Spec(R)$ lifts to a $G$-action on the asserted space $\wh W$.  Namely, we can then take $\wh V = \wh W/H$.  So we now assume that $E$ is Galois over $F_P$.

The Galois group $G=\Gal(E/F_P)$ acts on the (isomorphism classes of) birational projective morphisms to $\Spec(S)$.
Consider the fiber product of the morphisms in the orbit of $\wh V^* \to \Spec(S)$, and let 
$\wh V$ be the irreducible component that dominates $\Spec(S)$.  
Then $\wh V$ is normal since each $G$-conjugate of $\wh V^*$ is; and $\wh V \to \Spec(S)$ is a $G$-stable birational projective morphism that factors through $\wh V^* \to \Spec(S)$.  Thus
the action of $G$ on $\Spec(S)$ lifts to an action of $G$ on $\wh V$.  Let $\wh Z$ be the quotient of $\wh V$ by $G$.  Then the birational projective morphism $\wh V \to \Spec(S)$ descends to a
birational projective morphism $\wh Z \to \Spec(\wh R_P)$.
That is, we obtain a commutative diagram
\[
\xymatrix @R=.7cm {
\wh V \ar[r] \ar[d] & \wh V^* \ar[r] & \Spec(S) \ar[d] \\
\wh Z \ar[rr] & & \Spec(\wh R_P)
}
\]
whose vertical arrows are each finite and $G$-Galois, with generic fiber corresponding to the $G$-Galois field extension $E/F_P$.

The bottom horizontal morphism is a composition of blowups and blowdowns, centered at $P$ and at points on exceptional divisors lying over $P$.
We may perform the corresponding blowups and blowdowns on $\wh X$, 
observing inductively that at each step the spaces mapping to $\Spec(\wh R_P)$ and to $\wh X$ have the same exceptional divisors (fibers over $P$), and that generators of the local ring at a closed point over $P \in \wh X$ also generate the local ring at the corresponding closed point over $P \in \Spec(\wh R_P)$.
This process yields a pullback diagram 
\[
\xymatrix @R=.7cm {
\wh Z \ar[rr] \ar[d] & & \Spec(\wh R_P)  \ar[d]\\
\wh Y \ar[rr] & & \wh X
}
\]
where the bottom horizontal map is a birational projective morphism which is 
an isomorphism away from $P \in \wh X$.  This gives the desired conclusion. 
\end{proof}

\subsection{Patching} \label{patching section}

Below, $\wh X$ is a (projective) normal model of a one-variable function field $F$
over the complete discretely valued field $K$, and $X$ is the closed fiber of $\wh X$.  As in Section~\ref{setup}, for
each point $P$ on the closed fiber $X$ of $\wh X$ we have an associated
complete local domain $\wh R_P$ with fraction field $F_P$; and for each
non-empty connected Zariski strict open subset $U$ of $X$ we may consider the domain $\wh R_U$ and its fraction field $F_U$. For $P \in \mc P$ and $\wp$ a branch of $X$ at $P$, we also have the complete discrete valuation ring $\wh R_\wp$ and its fraction field $F_\wp$.

Consider a non-empty finite subset $\mc P \subset X$.  Let $\mc W$ be the set of
connected components of the complement of $\mc P$; each of these connected components $U \in \mc W$ is a strict open subset of $X$.
The set of all the branches of $X$ at points of $\mc P$ is
denoted by $\mc B$.

If $U \subset U'$ are connected strict open subsets of $X$, then
$F_{U'} \subset F_U$.  For $P \in U$, there is also an inclusion $F_U \subset F_P$; and if $\wp$ is a branch at $P$ lying on the closure of $U$, then there are inclusions $F_P, F_U \subset F_\wp$.  These containments are compatible, as $U,U'$ vary.

The next result generalizes \cite[Theorem~6.4]{HH:FP} and \cite[Proposition~2.3(a)]{HHK:Weier}.

\begin{prop} \label{big patch ok}
Let $\mc P$ be a non-empty finite set of closed points of $\wh X$, let
$\mc W$ be the set of connected components of the complement $V$ of $\mc P$ in the closed fiber $X$, and let
$\mc B$ be the
set of branches of $X$ at the points of $\mc P$.
For $Q \in \mc P$, let $\wh R_Q^\circ$ be the subring of $F_Q$ that consists of the elements that lie in $\wh R_\wp$ for each branch $\wp$ of $X$ at $Q$.
Then patching holds for the following injective diamonds.
\renewcommand{\theenumi}{\alph{enumi}}
\renewcommand{\labelenumi}{(\alph{enumi})}
\begin{enumerate} 
\item \label{big patch ok fields}
$F_\bullet = (F\,\leq\, \prod_{U \in \mc W}
  F_U, \ \prod_{Q \in \mc P} F_Q\,\leq\, \prod_{\wp \in \mc B} F_\wp)$.
\item \label{big patch ok rings}
$R_\bullet = (R_V \,\leq\, \prod_{U \in \mc W} \wh R_U, \ \prod_{Q \in \mc P} \wh R_Q^\circ \,\leq\, \prod_{\wp \in \mc B} \wh R_\wp)$.
\end{enumerate}
\end{prop}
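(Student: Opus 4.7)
The plan is to apply the refinement principle, Proposition~\ref{subfactor}, to reduce both statements to the case where every connected component of the complement of $\mc P$ is irreducible, a case treated by \cite[Theorem~6.4]{HH:FP} and \cite[Proposition~2.3(a)]{HHK:Weier}. Let $\mc Q$ be the finite set of closed points of $V := X \smallsetminus \mc P$ at which two distinct irreducible components of $V$ meet, and set $\mc P^+ := \mc P \sqcup \mc Q$. The connected components $\mc W^+$ of $V^+ := X \smallsetminus \mc P^+$ are then each irreducible, so the corresponding refined field diamond
\[
F_\bullet^+ = \Bigl(F \le \prod_{U' \in \mc W^+} F_{U'},\, \prod_{P \in \mc P^+} F_P \le \prod_{\wp \in \mc B^+} F_\wp\Bigr)
\]
(and its ring analogue $R_\bullet^+$) satisfies patching by the cited results.

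Next, apply Proposition~\ref{subfactor} to the coarse diamond $G_\bullet := \GL_n(F_\bullet)$, designating $H := \prod_{U \in \mc W} \GL_n(F_U)$ as the vertex to be refined, and setting $H_1 := \prod_{U' \in \mc W^+} \GL_n(F_{U'})$, $H_2 := \prod_{Q \in \mc Q} \GL_n(F_Q)$, and $H_0 := \prod_{\wp \in \mc B^+ \smallsetminus \mc B} \GL_n(F_\wp)$. A direct computation will identify the combined diamond $G'_\bullet$ produced by the principle with $\GL_n(F_\bullet^+)$, which satisfies intersection and factorization for every $n \ge 1$. By parts~(\ref{subfactor 3}) and~(\ref{subfactor 2}) of the refinement principle, the proof then reduces to showing the intersection property for $H_\bullet$.

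By Lemma~\ref{enlarging diamonds}(\ref{product diamond}), $H_\bullet$ splits as a product, indexed by the reducible $U \in \mc W$, of local diamonds $\GL_n(F_U,\, \prod_i F_{U_i'},\, \prod_{Q \in \mc Q_U} F_Q,\, \prod_\wp F_\wp)$, where $\mc Q_U := \mc Q \cap U$, the $U_i'$ are the connected components of $U \smallsetminus \mc Q_U$, and $\wp$ runs over branches at points of $\mc Q_U$. The main obstacle will be proving the intersection property for each such local diamond. The plan is to deduce it from the ring-level equality
\[
\wh R_U = \Bigl(\prod_i \wh R_{U_i'}\Bigr) \cap \Bigl(\prod_{Q \in \mc Q_U} \wh R_Q^\circ\Bigr) \quad \text{inside } \prod_\wp \wh R_\wp,
\]
by inverting $t$ and noting that $\GL_n$-intersection follows from ring intersection. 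For the ring equality itself, after using Remark~\ref{RW remark}(\ref{contraction remark}) to reduce to the case where $U$ is affine, and identifying the reduced closed fiber of $\Spec \wh R_U$ with $U$ via Lemma~\ref{lemma_tilde}, the approach is to argue modulo each $t^n$ via the Chinese remainder theorem applied to the irreducible components of $U$ meeting at $\mc Q_U$, and then pass to the $t$-adic inverse limit.

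Part~(b) will be handled identically, using the ring-level refined diamond $R_\bullet^+$ as the known input in place of $F_\bullet^+$.
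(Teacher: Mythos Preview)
Your approach via the refinement principle is genuinely different from the paper's, which instead reduces to $\mbb P^1_T$ by choosing a finite morphism $f:\wh X\to\mbb P^1_T$ with $\mc P=f^{-1}(\infty)$ and then invoking Lemma~\ref{corestriction} (corestriction of patching along the finite extension $F/K(x)$), together with a contraction argument for components missed by $\mc P$. That route avoids the refinement principle entirely here; the principle is only brought in later, for Propositions~\ref{local patching} and~\ref{blowup patch}, \emph{after} Proposition~\ref{big patch ok} is in hand.

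Your strategy runs into a real obstacle at the step where you need the intersection property for $H_\bullet$, i.e.\ for each reducible $U\in\mc W$ the equality
\[
F_U=\Bigl(\prod_i F_{U_i'}\Bigr)\cap\Bigl(\prod_{Q\in\mc Q_U}F_Q\Bigr)\quad\text{inside }\prod_\wp F_\wp.
\]
You propose to deduce this from a ring-level identity for $\wh R_U$ by ``inverting $t$''. But $F_U$ is the fraction field of the two-dimensional domain $\wh R_U$, not $\wh R_U[t^{-1}]$; the latter is still one-dimensional and strictly smaller. Likewise $\wh R_{U_i'}[t^{-1}]\subsetneq F_{U_i'}$. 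So even granting the ring identity, localizing at $t$ only yields an intersection statement among the rings $\wh R_\ast[t^{-1}]$, which does not give the needed field-level intersection. Clearing denominators does not obviously help either: given $a\in\prod_i F_{U_i'}$ compatible with $\prod_Q F_Q$, you would need a single element of $\wh R_U$ that simultaneously clears all poles of $a$ on each $\Spec(\wh R_{U_i'})$ and each $\Spec(\wh R_Q)$, and producing such an element is essentially the content you are trying to prove.

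Nor can the refinement principle itself rescue you: from factorization and intersection for $G'_\bullet=\GL_n(F_\bullet^+)$ you get factorization for $H_\bullet$ (part~\ref{subfactor 1}) and intersection for $G_\bullet$ (part~\ref{subfactor 3}), but to obtain factorization for $G_\bullet$ via part~\ref{subfactor 2} you need intersection for $H_\bullet$, while part~\ref{subfactor 4} would give that only once factorization for $G_\bullet$ is known---a circularity. In the paper this circle is broken precisely because Proposition~\ref{big patch ok} is established first by the corestriction argument, and only then is the field-level intersection for the local diamonds obtained (in Proposition~\ref{local patching}) by feeding both $\GL_n(\til F_\bullet)$ and $\GL_n(\wh F_\bullet)$, each already known to satisfy patching, into Proposition~\ref{subfactor}.
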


\begin{proof}
For short write $F_1 = \prod_{U \in \mc W}
F_U$,  $F_2 = \prod_{Q \in \mc P} F_Q$, $F_0 = \prod_{\wp \in \mc B} F_\wp$,
$R_1 = \prod_{U \in \mc W} \wh R_U$,
$R_2^\circ = \prod_{Q \in \mc P} \wh R_Q^\circ$, 
and $R_0 = \prod_{\wp \in \mc B} \wh R_\wp$.
Thus $F_\bullet = (F\,\le \, F_1,F_2\, \le \, F_0)$
and $R_\bullet = (R_V \,\leq\, R_1, R_2^\circ\, \le \, R_0)$.
The proofs for the two diamonds are similar.  We begin with $F_\bullet$.

First consider the special case that the set $\mc P$  meets each irreducible component of $X$ non-trivially.
By \cite[Proposition~3.3]{HHK:H1}, there is then a finite morphism $f:\wh X \to \mbb P^1_T$ such that $\mc P$ is the fiber over the point $\infty$ on the closed fiber $\mbb P^1_k$ of $\mbb P^1_T$.  Thus $\mc W$ is the set of connected components
of $f^{-1}(U')$, where $U' = \mbb A^1_k$.
Let $F'$ be the function field of $\mbb P^1_T$, so that the map $f$
gives a finite field extension $F/F'$.  Let $F'_\bullet
= (F'\,\le \, F'_1,F'_2\, \le \, F'_0)$ be defined
analogously as above for the curve $\mbb P^1_T$, with $\mc P' = \{\infty\}$ and $\mc W' = \{U'\}$. By \cite[Theorem~5.9]{HH:FP}, patching holds for $F'_\bullet$.  Since $F/F'$ is a
finite field extension, Lemma~\ref{corestriction} implies that patching holds for
$(F, F'_1\otimes_{F'} F, F'_2 \otimes_{F'} F, F'_0 \otimes_{F'} F)$.
The proposition now follows from the assertion that
$F_i = F'_i \otimes_{F'} F$, which holds for $i=1$ by
Proposition~\ref{connec_conds}(\ref{extension structure}) and for
$i=0,2$ by \cite[Lemma~6.2]{HH:FP} (enlarging the set $S'$ there if necessary).

Now consider the case that $\mc P$ does not meet each irreducible component of
$X$.  Since $\mc P$ is non-empty, not every irreducible component of $X$ is
disjoint from $\mc P$.  So by Remark~\ref{RW remark}(\ref{contraction remark}), we may contract
the components that are disjoint from $\mc P$,
via a proper birational morphism $\pi:\wh X \to \wh
Y$.  The set $\mc P$ maps bijectively to its image in $\wh Y$, with $\pi$
inducing an isomorphism between $F_Q$ (taken on $\wh X$) and $F_{\pi(Q)}$
(taken on $\wh Y$), for $Q \in \mc P$.
Similarly, $\pi$ induces an isomorphism between $F_\wp$ and $F_{\pi(\wp)}$ 
for $\wp \in \mc B$.
Moreover for $U \in \mc W$, the morphism $\pi$ induces an isomorphism between
$F_U$ and $F_{\pi(U)}$, by
Remark~\ref{RW remark}(\ref{contraction remark}), since these are the fraction fields of $\wh
R_U$ and $\wh R_{\pi(U)}$ (where the rings are taken on $\wh X$ and $\wh Y$ respectively).  Thus
the assertion for $\wh X$ is equivalent to the assertion for $\wh Y$, which
holds by the first case.  This completes the proof of patching for
$F_\bullet$.

Next we turn to patching for the diamond $R_\bullet$.  As above, we are reduced to the case that the set $\mc P$ meets each irreducible component of $X$
non-trivially, so that there is a finite morphism $f:\wh X \to \mbb P^1_T$ such that $\mc P = f^{-1}(\infty)$.
With notation as above, consider the analogous diamond $R'_\bullet = (R'_{U'} \leq R'_1, R_2'^\circ \leq R_0')$ taken with respect to $\mc P'=\{\infty\}$, where $R'_1 = \wh R_{U'}'$.
Note that $R_1 = \wh R_V = R_1' \otimes_{R_{U'}'} R_V$ by
Proposition~\ref{connec_conds}(\ref{patch ring structure},\ref{extension structure}).
Together with \cite[Lemma~6.2]{HH:FP}, this implies that 
$R_\bullet = R'_\bullet \otimes_{R_{U'}'} R_V$.

Also, $R_V$ is a finitely generated free module over $R_{U'}'$ by
\cite[Proposition~II.3.2.5(ii)]{Bo:CA},
using that
the finitely generated module $R_V/tR_V$ over the principal ideal domain $R_{U'}'/(t) = k[x]$
is torsion-free and hence free and also that
$(t)$ is the Jacobson radical of $R_{U'}'$.  Now intersection holds for
$R'_\bullet$ because $R_{U'}' \subseteq R_1' \cap R_2'^\circ \subseteq R_1' \cap F_1' \cap F_2' = R_1' \cap F' = R_{U'}'$, and factorization holds for $\GL_n(R'_\bullet)$  by \cite[Proposition~2.3(a)]{HHK:Weier}.  That is, patching holds for $R'_\bullet$.  Hence it also holds for $R_\bullet = R'_\bullet \otimes_{R_{U'}'} R_V$, by Lemma~\ref{corestriction}.
\end{proof}

The next result generalizes
\cite[Corollary~2.4, Theorem~3.1(a), and Corollary~3.3(a)]{HHK:Weier}, the
second of which is a form of the Weierstrass Preparation Theorem.  
It follows easily from Proposition~\ref{big patch ok}(\ref{big patch ok rings}) in the same way that those three previous results followed from
\cite[Proposition~2.3(a)]{HHK:Weier}. See also \cite[Theorem~3.6]{Cuong}.

\begin{cor} \label{Weier cor}
Let $\mc W$ be as in Proposition~\ref{big patch ok}.
\renewcommand{\theenumi}{\alph{enumi}}
\renewcommand{\labelenumi}{(\alph{enumi})}
\begin{enumerate}
\item \label{Weier patches}
Suppose that for every $U \in \mc W$ we are given an element $a_U \in F_U^\times$.  Then there exist $b \in F$ and elements $c_U \in \wh R_U^\times$ such that $a_U = bc_U \in F_U^\times$ for all $U \in \mc W$.
\item \label{Weier}
If $U \in \mc W$ and $a \in F_U$ then there exist $b \in F$ and $c \in \wh R_U^\times$ such that $a=bc \in F_U$.  More generally, if $a \in F_U$ and $n$ is a positive integer that is not divisible by the characteristic of the residue field $k$ of $T$, then there exist $b \in F$ and $c \in \wh R_U^\times$ such that $a=bc^n \in F_U$.
\end{enumerate}
\end{cor}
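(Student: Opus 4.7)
The plan is to imitate the proofs of \cite[Corollary~2.4, Theorem~3.1(a), Corollary~3.3(a)]{HHK:Weier}, with Proposition~\ref{big patch ok}(b) playing the role of \cite[Proposition~2.3(a)]{HHK:Weier}. Throughout I use the factorization and intersection properties of the ring diamond $R_\bullet$ supplied by that patching statement, together with the intersection property of the field diamond $F_\bullet$ from part~(a) of the same proposition.

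For part~(a): given $(a_U) \in \prod_U F_U^\times$, the first step is to produce a rational function $b_0 \in F^\times$ whose $\wp$-adic valuation equals $v_\wp(a_U)$ at every branch $\wp \in \mc B$ (where, for each $\wp$, the component $U \in \mc W$ is determined by the irreducible component of $X$ carrying $\wp$). Since $\mc B$ is finite and $\wh X$ is a projective normal $T$-curve, such a $b_0$ can be built after projecting to $\mbb P^1_T$ as in the proof of Proposition~\ref{big patch ok}, using monomials in the coordinate of $\mbb P^1_T$ and powers of the uniformizer $t$. After replacing each $a_U$ by $a_U/b_0$, we may assume the image of $(a_U)$ in $\prod_\wp F_\wp^\times$ lies in $\prod_\wp \wh R_\wp^\times$. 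Applying factorization for $\GL_1(R_\bullet)$, write this unit as $(u_U)_\wp \cdot (v_Q)_\wp$ with $(u_U) \in \prod_U \wh R_U^\times$ and $(v_Q) \in \prod_Q (\wh R_Q^\circ)^\times$. The tuple $(a_U/(b_0 u_U))_U \in \prod_U F_U^\times$ has the same image in $\prod_\wp F_\wp^\times$ as $(v_Q)_Q \in \prod_Q F_Q^\times$, so the intersection property for the field diamond produces a single $b_1 \in F^\times$ representing it. Setting $b := b_0 b_1$ and $c_U := u_U$ yields $a_U = b c_U$.

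For part~(b): the first assertion is part~(a) specialized to $a_U = a$ and $a_{U'} = 1$ for $U' \neq U$. For the $n$-th power refinement, the hypothesis that $n$ is prime to $\cha k$ lets Hensel's lemma interpret each element of $\wh R_\wp^\times$ as an $n$-th power modulo an adjustment prescribed by the residue field. After first writing $a = b_0 c_0$ via~(a), I apply~(a) a second time to these adjustments to extract a single $d \in F^\times$ for which $c_0/d$ becomes an $n$-th power at every branch; ring-level patching then lifts this to $c_0/d = c^n$ with $c \in \wh R_U^\times$, yielding $a = (b_0 d)\, c^n$.

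The main obstacle is the construction of $b_0$ in part~(a): for the base case $\wh X = \mbb P^1_T$ with $\mc P = \{\infty\}$ used in \cite[Proposition~2.3(a)]{HHK:Weier}, one uses monomials directly, but in the present generality the construction has to be transferred through the finite projection $f:\wh X \to \mbb P^1_T$ underlying the proof of Proposition~\ref{big patch ok}. Once $b_0$ is in hand, the factorization and intersection properties from Proposition~\ref{big patch ok}(b) do the rest mechanically.
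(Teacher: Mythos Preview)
Your overall strategy is exactly what the paper intends: it gives no argument beyond ``follows from Proposition~\ref{big patch ok}(\ref{big patch ok rings}) in the same way that \cite[Corollary~2.4, Theorem~3.1(a), Corollary~3.3(a)]{HHK:Weier} followed from \cite[Proposition~2.3(a)]{HHK:Weier},'' and you have correctly reconstructed that template (reduce to units at the branches, apply $\GL_1$ factorization for $R_\bullet$, then use intersection for $F_\bullet$).

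There is, however, a genuine difficulty in your construction of $b_0 \in F^\times$ with prescribed $\wp$-adic valuations.  Two distinct branches $\wp,\wp'$ at a single point $Q\in\mc P$ that lie on the \emph{same} irreducible component of $X$ restrict to the same discrete valuation on $F$ (both have center the generic point $\eta$ of that component, so $v_\wp|_F = v_{\wp'}|_F$ up to a common scalar).  Since the elements $a_U$ on either side are independent, you cannot in general match both valuations with a single $b_0\in F^\times$.  Transferring through a finite map to $\mbb P^1_T$ does not help: the pullback of a power of $t$ still sees only the valuation $v_\eta$, not the individual analytic branches.

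The standard repair, and almost certainly what the cited \cite{HHK:Weier} arguments do, is to make the preliminary adjustment in $\prod_Q F_Q^\times$ rather than in $F^\times$.  The branches at $Q$ are distinct height-one primes of $\wh R_Q$, so by elementary approximation in $\wh R_Q$ you can choose $d_Q\in F_Q^\times$ with $v_\wp(d_Q)=v_\wp(a_U)$ for every branch $\wp$ at $Q$.  Then the quotient of the images of $(a_U)_U$ and $(d_Q)_Q$ in $\prod_\wp F_\wp^\times$ lies in $\prod_\wp \wh R_\wp^\times$, and your steps~(factor via $\GL_1(R_\bullet)$, then apply intersection for $F_\bullet$) go through verbatim to produce $b\in F^\times$ and $c_U\in\wh R_U^\times$.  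Your treatment of part~(\ref{Weier}) is fine once part~(\ref{Weier patches}) is in place.
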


The next result is analogous to Proposition~\ref{big patch ok}(\ref{big patch ok fields}), with $F$ replaced by $F_W$, for $W \subset X$.

\begin{prop}\label{local patching}
Let $W \subseteq X$ be a connected open subset of $X$.  Let $\mc
P$ be a non-empty finite set of closed points of $W$; let $\mc W$ be the
set of connected components of the complement of $\mc P$ in $W$;
and let $\mc B$ be the set of branches of $W$ at the points of
$\mc P$.  Then patching holds for the injective diamond
\(\displaystyle F_{W\bullet} =
\left(F_W \le \prod_{U \in \mc W} F_U,
\prod_{Q \in \mc P} F_Q \le \prod_{\wp \in \mc B} F_\wp\right).\)
\end{prop}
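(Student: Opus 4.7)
The plan is to deduce the proposition from the refinement principle (Proposition~\ref{subfactor}) combined with two instances of Proposition~\ref{big patch ok}(a) applied to two different finite sets of closed points of $X$.

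First I would dispose of the case $W = X$, where $F_W = F$ and the statement is exactly Proposition~\ref{big patch ok}(a). For $W \subsetneq X$, I would use Remark~\ref{RW remark}(b) to contract the irreducible components of $X$ that do not meet $W$; this preserves $F$, $F_W$, $F_U$, $F_Q$, and $F_\wp$, so I may assume that every irreducible component of $X$ meets $W$. Then $\mc P_0 := X \smallsetminus W$ is a nonempty finite set of closed points of $X$. Let $\mc P' = \mc P \cup \mc P_0$, let $\mc B_0$ denote the set of branches of $X$ at points of $\mc P_0$, and observe that the connected components of $X \smallsetminus \mc P'$ are precisely the elements of $\mc W$.

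Applying Proposition~\ref{big patch ok}(a) first to $\mc P_0$ and then to $\mc P'$ yields that patching holds for the two injective diamonds
\[ G_\bullet := \bigl(F \leq F_W,\ {\textstyle\prod_{Q \in \mc P_0}} F_Q \leq {\textstyle\prod_{\wp \in \mc B_0}} F_\wp\bigr), \quad G'_\bullet := \bigl(F \leq {\textstyle\prod_{U \in \mc W}} F_U,\ {\textstyle\prod_{Q \in \mc P'}} F_Q \leq {\textstyle\prod_{\wp \in \mc B \cup \mc B_0}} F_\wp\bigr). \]
For each $n \geq 1$, Theorem~\ref{patching prop thm} then guarantees that the diamonds of groups $\GL_n(G_\bullet)$ and $\GL_n(G'_\bullet)$ both satisfy the intersection and factorization properties. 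Setting $H_\bullet := \GL_n(F_{W\bullet})$ together with $G = \GL_n(F)$, $H = \GL_n(F_W)$, $E = \prod_{Q \in \mc P_0}\GL_n(F_Q)$, $J = \prod_{\wp \in \mc B_0}\GL_n(F_\wp)$, $H_1 = \prod_{U\in\mc W} \GL_n(F_U)$, $H_2 = \prod_{Q\in\mc P} \GL_n(F_Q)$, and $H_0 = \prod_{\wp\in\mc B} \GL_n(F_\wp)$, I would assemble these into the commutative diagram $\ms R$ of Proposition~\ref{subfactor}, with all arrows induced by the evident ring inclusions. The commutativity of $\ms R$ reduces to checking that the map $H \to J$ factors through $H_1$ via $\beta_1$, which rests on the geometric observation that each branch $\wp \in \mc B_0$ at a point $Q \in \mc P_0$ is a branch of $\bar U$ at $Q$ for exactly one $U \in \mc W$, yielding compatible inclusions $F_W \subset F_U \subset F_\wp$.

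Parts (1) and (4) of Proposition~\ref{subfactor}, applied with both $G_\bullet$ and $G'_\bullet$ satisfying intersection and factorization, then give factorization and intersection for $H_\bullet = \GL_n(F_{W\bullet})$. Since $n$ was arbitrary, Theorem~\ref{patching prop thm} concludes that patching holds for $F_{W\bullet}$. The main obstacle I anticipate is setting up the commutative diagram cleanly and verifying the factoring of $H \to J$ through $H_1$, which is ultimately the geometric statement that each boundary branch is adjacent to a unique connected component of $W \smallsetminus \mc P$.
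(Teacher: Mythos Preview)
Your proposal is correct and follows essentially the same route as the paper's proof: reduce to $\bar W = X$ via contraction, apply Proposition~\ref{big patch ok}(\ref{big patch ok fields}) to both the boundary set $\mc P_0 = X\smallsetminus W$ and to the enlarged set $\mc P' = \mc P \cup \mc P_0$, then feed the resulting $\GL_n$-diamonds into the refinement principle (parts~\ref{subfactor 1} and~\ref{subfactor 4}) to obtain intersection and factorization for $\GL_n(F_{W\bullet})$. Two small remarks: since your ring diamond $G_\bullet$ has $F_W$ and $\prod_{Q\in\mc P_0}F_Q$ in the order opposite to that of Proposition~\ref{subfactor}, you should invoke Lemma~\ref{factor ordering} when matching it to the refinement's $G_\bullet$ (the paper does this explicitly); and your anticipated obstacle about the map $H\to J$ factoring through $H_1$ is resolved by the compatibility of the inclusions $F_W\subset F_U\subset F_\wp$ recorded just before Proposition~\ref{big patch ok}.
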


\begin{proof}
If $W=X$ then the assertion is given by Proposition~\ref{big patch ok}(\ref{big patch ok fields}).  So
assume that $W$ is strictly contained in $X$. After blowing down all
irreducible components of $X$ that do not intersect $W$ as in Remark~\ref{RW remark}(\ref{contraction remark}), we may assume that
the closure of $W$ is $X$. Let
$\til{\mc P}$ be the complement of $W$ in its closure $X$.
Also let $\til{\mc B}$ be the set of branches at the points of $\til{\mc P}$.  By 
Proposition~\ref{big patch ok}(\ref{big patch ok fields}), patching holds for the diamond $\til
F_\bullet  = (F\leq  F_W,\prod_{Q \in \til{\mc P}} F_Q\leq \prod_{\wp \in \til {\mc B}} F_\wp)$.

Let $\wh{\mc P}$ be the disjoint union $\til{\mc
P} \sqcup \mc P$ and let $\wh{\mc B}$ be
the set of branches at the points of $\wh{\mc P}$. Thus 
$\wh{\mc B} = \til{\mc
  B} \sqcup \mc B$. Notice that the set of connected components
of the complement of $\wh{\mc P}$ in $X$ is the set of connected components of
the complement of ${\mc P}$ in $W$, i.e., ${\mc W}$.
Again, patching
holds for the diamond
$\wh F_\bullet  = (F\leq \prod_{U \in {\mc W}} F_U,\, \prod_{Q \in \wh{\mc P}}
F_Q\leq\prod_{\wp \in \wh{\mc B}} F_\wp)$ by Proposition~\ref{big patch ok}(\ref{big patch ok fields}).

The general linear groups for the various products of fields form the following diagram:
\[
\xymatrix @1@=0.1pt@M=0.1pt@R=.6cm
{
& & & {\displaystyle\prod_{\wp \in \til{\mc B}} \GL_n(F_\wp)}
& & 	{\displaystyle\prod_{\wp \in \mc B} \GL_n(F_\wp)} \\
& & \!\!\!\!\!{\displaystyle\prod_{Q \in \til{\mc P}} \GL_n(F_Q)} \ar[ru]
& & 	\!\!\!\!\!{\displaystyle\prod_{U \in \mc W} \GL_n(F_U)} \ar[lu] \ar[ru]
& & {\displaystyle\prod_{Q \in \mc P} \GL_n(F_Q)} \ar[lu] \\
& & & & & \GL_n(F_W) \ar[lu] \ar[ru] \\
& & & & \GL_n(F) \ar[ru] \ar[lluu]
}\]
As noted above, patching holds for the diamonds $\til F_\bullet$ and $\wh F_\bullet$;
and so
by Theorem~\ref{patching prop thm}(\ref{patching and matrix factorization}),
factorization and intersection hold for
the diamonds of groups
\[G_\bullet:=\GL_n(\til F_\bullet) = \bigl(\GL_n(F) \le
\GL_n(F_W),
\prod_{Q \in \til{\mc P}} \GL_n(F_Q)
\le \prod_{\wp \in \til{\mc B}} \GL_n(F_\wp)\bigr),\]
\[G_\bullet ':=\GL_n(\wh F_\bullet) = \bigl(\GL_n(F) \le
\prod_{U \in \mc W} \GL_n(F_U),
\prod_{Q \in \wh{\mc P}} \GL_n(F_Q)
\le \prod_{\wp \in \wh{\mc B}} \GL_n(F_\wp)\bigr).\]
Proposition~\ref{subfactor} (parts \ref{subfactor 1} and \ref{subfactor 4})
and
Lemma~\ref{factor ordering}
yield factorization and intersection for
\[\GL_n(F_{W\bullet}) = \bigl(\GL_n(F_W) \le
\prod_{U \in \mc W} \GL_n(F_U),
\prod_{Q \in {\mc P}} \GL_n(F_Q)
\le \prod_{\wp \in {\mc B}} \GL_n(F_\wp)\bigr).\]
That is, patching holds for the diamond $F_{W\bullet}$, as desired.
\end{proof}

The next result, which answers a question posed by Yong Hu, permits patching on the exceptional divisor of a blow-up $f:\wh X \to \wh Y$.  Alternatively, we can view $f$ as a blow-down, in which a non-empty connected union $V$ of some but not all of the irreducible components of the closed fiber $X \subset \wh X$ are contracted to a point $P \in Y \subset \wh Y$ (cf.\ Remark~\ref{RW remark}(\ref{contraction remark})).

\begin{prop} \label{blowup patch}
Let $f:\wh X \to \wh Y$ be a proper birational morphism of projective normal $T$-curves, having closed fibers $X,Y$ respectively.  Let
$P \in Y$ be a closed point, let
$V \subset X$ be the
inverse image of $P$ in $X$, and let $\til Y \subseteq X$ be the proper transform of $Y$.
Suppose that $\dim(V)=1$, and that $f$ restricts to an isomorphism
$\wh X \smallsetminus V \to \wh Y \smallsetminus \{P\}$.
Choose a finite collection of closed points $\mc
P$ in $V$ that includes all the points of $V \cap \til Y$.
Let $\mc W$ be the set of connected components of
$V \smallsetminus \mc P$, and let $\mc B$ be the set of
branches at the points in $\mc P$ along the components of $V$.
Then
patching holds for the injective diamond
\(\displaystyle F_{P\bullet} = \left( F_P\le \prod_{Q \in \mc P} F_Q,
\prod_{U \in \mc W} F_U\le \prod_{\wp \in \mc B} F_\wp\right),\)
with respect to the natural inclusions.
\end{prop}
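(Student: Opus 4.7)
The proof plan parallels that of Proposition \ref{local patching}: I will deduce patching for $F_{P\bullet}$ from patching for two larger diamonds, one arising from $\wh Y$ and one from $\wh X$, and combine them via the refinement principle. First, since $f$ restricts to an isomorphism $\wh X \smallsetminus V \to \wh Y \smallsetminus \{P\}$, and since the hypothesis $V \cap \til Y \subseteq \mc P$ forces $\til Y \smallsetminus \mc P = \til Y \smallsetminus V$, the map $f$ identifies the connected components of $\til Y \smallsetminus \mc P$ with those of $Y \smallsetminus \{P\}$; call this common set $\mc W_Y$. By Remark~\ref{RW remark}(\ref{contraction remark}), the associated fields $F_W$ computed on $\wh X$ or on $\wh Y$ agree. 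In the same way, the branches of $Y$ at $P$ correspond bijectively to the branches of $X$ at points of $V \cap \til Y$ coming from components of $\til Y$; call this common set $\mc B_Y$, again with matching fields $F_\wp$. Consequently the components of $X \smallsetminus \mc P$ split as $\mc W_Y \sqcup \mc W$ and the branches of $X$ at points of $\mc P$ split as $\mc B_Y \sqcup \mc B$, because $V \smallsetminus \mc P$ and $\til Y \smallsetminus \mc P$ are disjoint.

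Applying Proposition~\ref{big patch ok}(\ref{big patch ok fields}) on $\wh Y$ with the singleton $\{P\}$, patching holds for
\[
G_\bullet \ =\ \Bigl(\, F \le \prod_{W \in \mc W_Y} F_W,\ F_P \le \prod_{\wp \in \mc B_Y} F_\wp \,\Bigr),
\]
and applying the same proposition on $\wh X$ with $\mc P$, together with the splittings above, patching holds for
\[
G'_\bullet \ =\ \Bigl(\, F \le \prod_{W \in \mc W_Y} F_W \times \prod_{U \in \mc W} F_U,\ \prod_{Q \in \mc P} F_Q \le \prod_{\wp \in \mc B_Y} F_\wp \times \prod_{\wp \in \mc B} F_\wp \,\Bigr).
\]
By Theorem~\ref{patching prop thm}(\ref{patching and matrix factorization}), both $\GL_n(G_\bullet)$ and $\GL_n(G'_\bullet)$ satisfy factorization and intersection for every $n \ge 1$.

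The diamonds $G_\bullet$, $G'_\bullet$, and $H_\bullet := F_{P\bullet}$ now fit the configuration of the refinement principle with assignments $G = F$, $H = F_P$, $E = \prod_{W \in \mc W_Y} F_W$, $J = \prod_{\wp \in \mc B_Y} F_\wp$, $H_1 = \prod_{Q \in \mc P} F_Q$, $H_2 = \prod_{U \in \mc W} F_U$, $H_0 = \prod_{\wp \in \mc B} F_\wp$; all arrows are the natural inclusions induced by $f$ and the relevant completions, which commute. Combining Lemma~\ref{factor ordering} (used to put $H$ and $H_1$ on the sides demanded by the conventions of Proposition~\ref{subfactor}) with parts~(\ref{subfactor 1}) and~(\ref{subfactor 4}) of Proposition~\ref{subfactor}, applied to $\GL_n$ of the three diamonds, yields factorization and intersection for $\GL_n(F_{P\bullet})$ for every $n$, which is patching for $F_{P\bullet}$. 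The principal obstacle is the bookkeeping behind the clean splitting of components and branches of $X$ at $\mc P$ into a piece coming from $\til Y$ and a piece coming from $V$; this is exactly what the hypothesis $V \cap \til Y \subseteq \mc P$ guarantees, and once it is in place the identification of the $\wh Y$-side and $\wh X$-side fields through $f$ makes the refinement principle directly applicable.
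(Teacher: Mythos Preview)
Your proof is correct and follows essentially the same approach as the paper's: both set up the refinement diagram with $G=F$, $E=\prod_{\mc W_Y} F_W$, $J=\prod_{\mc B_Y} F_\wp$, $H=F_P$, $H_1=\prod_{\mc P} F_Q$, $H_2=\prod_{\mc W} F_U$, $H_0=\prod_{\mc B} F_\wp$, obtain patching for $G_\bullet$ and $G'_\bullet$ from Proposition~\ref{big patch ok} on $\wh Y$ and $\wh X$ respectively, and then invoke parts~(\ref{subfactor 1}) and~(\ref{subfactor 4}) of Proposition~\ref{subfactor} on $\GL_n$. The paper additionally spells out why the inclusions $F_P \hookrightarrow F_Q$ and $F_P \hookrightarrow F_U$ exist (via the pullback $\wh X_P = f^{-1}(\Spec \wh R_P)$), which you gloss as ``natural inclusions''; this is worth a sentence since $F_P$ is defined on $\wh Y$ while $F_Q$, $F_U$ are defined on $\wh X$.
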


\begin{proof}
First observe that there are natural inclusions of $F_P$ into $F_U$ and $F_Q$, for
$U \in \mc W$ and $Q \in \mc P$.  Namely, the natural morphism $\Spec(\wh R_U) \to \wh X$
factors through $\wh X_P := f^{-1}(\Spec(\wh R_P))$, the pullback of $\wh X \to \wh Y$ via $\Spec(\wh R_P) \to \wh Y$.  Since $\wh X \to \wh Y$ is birational, so is
$\wh X_P \to \Spec(\wh R_P)$, and hence the function field of $\wh X_P$ is $F_P$.
The morphism $\Spec(\wh R_U) \to \wh X_P$ induces a homomorphism of function fields in the other direction, $F_P \to F_U$, which is necessarily an inclusion.  The case of $F_P \to F_Q$ is similar.

Let $\til{\mc W}$ be the set of connected components of $Y \smallsetminus\{ P\}$.
Let $\til{\mc B}$ be the set of branches of $Y$ at
$ P$.
Via $f$, we may identify $X \smallsetminus V$ with its isomorphic image $Y
\smallsetminus \{P\}$.  We may thus regard the elements of $\til{\mc W}$ as open subsets of $X$, and the elements of $\til{\mc B}$ as branches of $\til Y$.
Write $\wh {\mc W}$ for the disjoint union $\til{\mc W} \sqcup \mc W$. The set of points of $X$ that lie in no
element of $\wh {\mc W}$ is exactly $\mc P$; let $\wh {\mc B}$ be the set of
branches of $X$ at points of $\mc P$. Thus $\wh{\mc B}$ equals the
disjoint union $\til{\mc B}
\sqcup \mc B$. Note that at the points of $\mc P$, some
of the branches of $X$ are elements of $\mc B$ and some are in $\til{\mc B}$,
depending on whether the branches lie on a component of $V$ or of $\til Y$.

We may now consider the associated diagram of groups:
\[
\xymatrix @1@=0.1pt@M=0.1pt@R=.6cm
{
& & & {\displaystyle\prod_{\wp \in \til{\mc B}} \GL_n(F_\wp)}
& & 	{\displaystyle\prod_{\wp \in \mc B} \GL_n(F_\wp)} \\
& & \!\!\!\!\!{\displaystyle \prod_{U \in \til{\mc W}} \GL_n(F_U)} \ar[ru]
& & 	\!\!\!\!\!{\displaystyle\prod_{Q \in \mc P} \GL_n(F_Q)} \ar[lu] \ar[ru]
& & {\displaystyle\prod_{U \in \mc W} \GL_n(F_U)} \ar[lu] \\
& & & & & \GL_n(F_P) \ar[lu] \ar[ru] \\
& & & & \GL_n(F)  \ar[ru] \ar[lluu]
}\]
By Proposition~\ref{big patch ok}(\ref{big patch ok fields})
and Lemma~\ref{factor ordering},
patching holds for the diamonds
\[\til F_\bullet = (F \le \prod_{U \in {\til{\mc W}}} F_U ,F_P\le \prod_{\wp \in \til{\mc B}} F_\wp),\ \ \
\wh F_\bullet = (F \le \prod_{Q \in \mc P} F_Q, \prod_{U \in \wh {\mc W}} F_U \le \prod_{\wp \in \wh {\mc B}} F_\wp).\]
That is, factorization and intersection hold for
the diamonds of groups
$G_\bullet:=\GL_n(\til F_\bullet)$ and $G_\bullet':=\GL_n(\wh F_\bullet)$.
Parts~(\ref{subfactor 1}) and~(\ref{subfactor 4}) of
Proposition~\ref{subfactor} imply that the diamond $\GL_n(F_{P\bullet})$ satisfies
factorization and intersection; i.e.\
patching holds
for $F_{P\bullet}$, as desired.
\end{proof}

\begin{example} \label{blow-up example}
Let $T = k[[t]]$ and let $\wh Y$ be the projective $y$-line $\mbb P^1_T$, with closed fiber $Y = \mbb P^1_k$.  Let $P$ be the point $y=0$ on $Y$, with complete local ring $\wh R_P = k[[y,t]]$.
Consider the blow-up $\wh X \to \wh Y$ of $\wh Y$ at $P$.  The exceptional divisor $V$ is a copy of the $x$-line over $k$, with $x=0$ at the point ${P'}$ of $\wh X$ where $V$ meets the proper transform of $Y$.  The complete local ring $\wh R_{P'}$ is 
$k[[y,t,x]]/(t-xy) = k[[x,y]]$.  Writing $W$ for the complement of ${P'}$ in $V$, the ring 
$\wh R_W$ is the $t$-adic completion of $k[[y,t]][x^{-1}]/(x^{-1}t-y)$, which is naturally isomorphic to $k[x^{-1}][[t]]$ (with $y=x^{-1}t$).
The unique branch $\wp$ of $V$ at ${P'}$ has associated ring
$\wh R_\wp = k((x))[[y]]$, which contains $\wh R_{P'}$ and $\wh R_W$.  The intersection of these two rings in $\wh R_\wp$ is $\wh R_P$.
The respective fraction fields are
$F_P = k((y,t))$, $F_{P'} = k((x,y))$, $F_W = \Frac\bigl(k[x^{-1}][[t]]\bigr)$, and $F_\wp = k((x))((y))$.  They
satisfy the intersection condition 
$F_P = F_{P'} \cap F_W \subset F_\wp$, and they also satisfy factorization for $\GL_n$.  This example is a twisted form of the example given in \cite{HH:FP} after Theorem~5.9 there.  It is also related to the situations discussed in \cite[Section~1]{PN} and \cite{BBT}.
\end{example}

The next corollary,
which will be useful in Section~\ref{csa section},
is a variant of the previous proposition.  Here we blow up $\Spec(S)$ for some two-dimensional complete ring $S$ that need not be of the form $\wh R_P$, but instead can be a finite extension of some $\wh R_P$ or some $\wh R_W$.  
In this situation, we can again consider fields of the form $F_Q$, $F_U$, and $F_\wp$, associated to this blowup; the previous definitions carry over mutatis mutandis to the case of any two-dimensional normal scheme whose closed points all lie on a connected curve.  

\begin{cor} \label{extension blowup both cases}
Let $\wh X$ be a projective normal $T$-curve with closed fiber $X$, and let $\xi$ be
either a closed point $P \in X$ or a connected affine open subset $W \subset X$.
Let $E$ be a finite separable extension of $F_\xi$, let
$S$ be the integral closure of $\wh R_\xi$ in $E$, and
let $\til\xi$ be the inverse image of $\xi$ under $\Spec(S) \to \Spec(\wh R_\xi)$.
Let $D$ be a divisor on $\Spec(S)$.
Then there exist a birational projective morphism $\pi:\wh V \to \Spec(S)$ and a finite set $\mc P$ of closed points of
$V := \pi^{-1}(\til\xi)$
such that the following hold:
\begin{enumerate}[\ \ (i)]
\item \label{bl up normal}
$\wh V$ is a normal scheme.
\item \label{ncd}
$D':=\pi^{-1}(D)$ is a normal crossing divisor on $\wh V$.
\item \label{crossing points}
$\mc P$ contains all the points of $V$ where $V \cup D'$ is not regular, and it meets every connected component of the exceptional locus of $\pi$.
\item \label{patching on bl up}
Let $\mc W$ be the set of connected components of $V \smallsetminus \mc P$,
and let $\mc B$ be the collection of branches of $V$ at the points of $\mc P$.
If $\mc P, \mc W$ are non-empty, then
patching holds for the injective diamond
\(\displaystyle E_\bullet =  (E \le \prod_{Q \in \mc P} F_Q,
\prod_{U \in \mc W} F_U \le
\prod_{\wp \in \mc B} F_\wp).\)
\end{enumerate}
\end{cor}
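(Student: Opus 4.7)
The plan is to construct $\wh V$ by resolution of singularities so that (i)--(iii) hold, and then deduce (iv) by reducing it to patching on an auxiliary projective normal $T$-curve via the Refinement Principle (Proposition~\ref{subfactor}) and Lemma~\ref{corestriction}. By Lipman's theorem on resolution of singularities for excellent two-dimensional schemes, there exists a birational projective morphism $\wh V^* \to \Spec(S)$ with $\wh V^*$ normal and with the preimage of $D$ a normal crossing divisor. In the case $\xi = P$, I would then apply Proposition~\ref{small patch blowup} (iteratively, combined with extra blow-ups to maintain the normal crossing property) to produce a normal $\wh V$ admitting a finite morphism to a scheme $\wh Z$, where $\wh Z \to \wh Y$ is the pullback of a birational projective morphism $\wh Y \to \wh X$ along $\Spec(\wh R_P) \to \wh X$. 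In the case $\xi = W$, I would instead arrange $\wh V$ to fit into an analogous diagram, using Proposition~\ref{descend_exten}(\ref{descend_big_patch}) or its generalization (to affine opens on arbitrary $T$-curves, noted in the text following that proposition) to descend $E/F_W$ to a finite separable extension $F'/F$, and then taking $\wh V$ to be a normal birational modification of the normalization of $\wh X$ in $F'$. The set $\mc P$ is chosen to contain the (finitely many) points where $V \cup D'$ fails to be regular, to meet every connected component of the exceptional locus of $\pi$, and to satisfy the auxiliary conditions imposed by the patching propositions used below.

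For part (iv) in the case $\xi = P$, I would apply Proposition~\ref{blowup patch} to $\wh Y \to \wh X$ at $P$, with $\mc P'$ the image of $\mc P$ in the exceptional fiber $V_Y$ (enlarged if necessary to include $V_Y \cap \til Y$), to obtain patching for the diamond $F_{P\bullet}$ on $\wh Y$. Since $\wh Z \to \wh Y$ is a base change that preserves complete local rings at closed points of $V_Y$, the same patching statement holds for the corresponding diamond on $\wh Z$. The finite morphism $\wh V \to \wh Z$ then identifies $E_\bullet$ with the base change along $E/F_P$ of the diamond on $\wh Z$ (cf.\ Proposition~\ref{connec_conds}(\ref{extension structure})), and Lemma~\ref{corestriction} yields patching for $E_\bullet$. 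In the case $\xi = W$, an analogous argument applies with Proposition~\ref{local patching} replacing Proposition~\ref{blowup patch}, and with the Refinement Principle used to combine patching on $\wh X'$ with patching across any further blow-ups producing $\wh V$.

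The main obstacle is the tension between the resolution-of-singularities requirement (for the normal crossing condition (ii)) and the structural requirements for transferring patching via finiteness and base change. Additional blow-ups needed to restore the normal crossing property after applying Proposition~\ref{small patch blowup} can destroy the finiteness of $\wh V \to \wh Z$, so one must alternate between blow-ups and applications of Proposition~\ref{small patch blowup}, invoking the Refinement Principle (Proposition~\ref{subfactor}) to assemble patching across the resulting tower of modifications into a single patching statement for $E_\bullet$ on $\wh V$.
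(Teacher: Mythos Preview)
Your outline for Case~I ($\xi=P$) is essentially the paper's argument, but you have manufactured a difficulty that is not there. Proposition~\ref{small patch blowup} is applied \emph{after} resolving $\Spec(S)$: one first takes $\wh V^* \to \Spec(S)$ with $\wh V^*$ regular and $D^*$ a normal crossing divisor, and only then feeds $\wh V^*$ into Proposition~\ref{small patch blowup}. The resulting $\wh V$ is obtained from $\wh V^*$ as an irreducible component of a fiber product of Galois conjugates of $\wh V^*$, hence $\wh V \to \wh V^*$ is itself a blow-up; normal crossings are therefore preserved automatically on $\wh V$. No alternation or tower is needed, and a single invocation of Proposition~\ref{small patch blowup} suffices. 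What you call ``the main obstacle'' dissolves once you apply the two steps in this order.

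For Case~II ($\xi=W$) your proposed route via descent is genuinely different from the paper's, and it has a gap. The generalization of Proposition~\ref{descend_exten} recorded in the text applies to affine opens in the closed fiber of a \emph{smooth} projective $T$-curve, whereas the present corollary assumes only that $\wh X$ is normal; so the descent of $E/F_W$ to a global $F'/F$ is not immediately available. The paper avoids descent entirely. It first chooses a finite set $\mc P^* \subset W$ containing the images of the points where $\til W \cup D$ is not regular, obtains patching for $F_{W\bullet}$ from Proposition~\ref{local patching}, and tensors up along the finite extension $E/F_W$ via Lemma~\ref{corestriction} to get patching for the diamond $\til E_\bullet$ on $\Spec(S)$ itself. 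Then, at each point $Q$ of $\til W$ where $D$ fails to be a normal crossing divisor, it invokes the already-established Case~I (for the extension $F_Q/F_{f(Q)}$) to produce local blow-ups with patching on their exceptional loci. These local blow-ups glue to a global $\pi:\wh V \to \Spec(S)$, and the Refinement Principle (specifically parts~(\ref{subfactor 5}) and~(\ref{subfactor 6}) of Proposition~\ref{subfactor}, combined with Lemma~\ref{enlarging diamonds}) assembles patching for $\til E_\bullet$ and for the product of the local diamonds into patching for $E_\bullet$. So the paper's Case~II is a reduction to Case~I at finitely many bad points, not a global descent.
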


\begin{proof}
{\em Case~I:} $\xi = P \in X$.

Since $\Spec(S)$
is two-dimensional, excellent, and normal, by \cite{Abh} and \cite{Lip} there is a birational projective morphism (viz.\ a composition of blowups)
$\pi^*:\wh V^* \to \Spec(S)$ such that $\wh V^*$ is regular and
$D^* := (\pi^*)^{-1}(D) \subset \wh V^*$
is a normal crossing divisor.
By Proposition~\ref{small patch blowup}, we obtain a diagram
\[
\xymatrix @R=.7cm {
\wh V \ar[r] \ar[d] \ar@/^1.3pc/[rr]^\pi \ar@/_1pc/[dd]_\alpha & \wh V^* \ar[r] & \Spec(S) \ar[d] \\
\wh Z \ar[rr]^\sigma \ar[d] & & \Spec(\wh R_P)  \ar[d]\\
\wh Y \ar[rr]^\omega & & \wh X
}
\]
with the properties asserted there.  Here $D' := \pi^{-1}(D)$ is a normal crossing divisor on $\wh V$ because $D^*$ is, and since the map $\wh V \to \wh V^*$ is a blow-up.  

Recall that $V := \pi^{-1}(\til P)$, where $\til P \in \Spec(S)$ is the inverse image of $P \in \Spec(\wh R_P)$. 
Let $Y =\omega^{-1}(X)$ be the closed fiber of $\wh Y$, and  
$V_0 := \alpha(V) = \omega^{-1}(P) \subset Y$.
Choose a non-empty finite set $\mc P_0$ of closed points of $V_0$
that contains
the image under $\alpha$ of
the locus where $V \cup D'$ is not regular, and
also contains the points where $V_0$ meets the proper transform of $X$ in $Y$.
Let $\mc P = \alpha^{-1}(\mc P_0) \subseteq V$.
Thus the above
properties~(\ref{bl up normal}),~(\ref{ncd}),~(\ref{crossing points}) hold.
Let $\mc W$ be as in (\ref{patching on bl up}).

If $\pi$ is an isomorphism, then $\mc W$ is empty and we are done.  Otherwise,
$V$ is a curve, $\mc W$ is non-empty, and it remains to show that patching holds for the diamond $E_\bullet$.

Let $\mc W_0$ be the set of connected components of $V_0 \smallsetminus \mc P_0$ 
and let $\mc B_0$ be the set of branches of $V_0$ 
at the points of $\mc P_0$.
Thus the elements of $\mc W$ are the inverse images of the elements of $\mc W_0$, and similarly for $\mc B$ and $\mc B_0$.
By Proposition~\ref{blowup patch},
patching holds for the diamond
\(\displaystyle F_{P\bullet} = \bigl( F_P \le \prod_{Q \in \mc P_0} F_Q,
\prod_{U \in \mc W_0} F_U \le
\prod_{\wp \in \mc B_0} F_\wp  \bigr),\)
taking $F_P$ with respect to $\wh X$ and taking the other fields
with respect to $\wh Y$.

By Proposition~\ref{small patch blowup}, the bottom half of the above diagram 
is a pullback square, with $Z := \sigma^{-1}(P) \to V_0$ 
an isomorphism.
For each $U \in \mc W_0$, with inverse image $U' \subseteq Z$, the natural map $F_U \to F_{U'}$ is an isomorphism; and similarly for $\mc P_0$ and $\mc B_0$.  
The diamond $F_{P\bullet}$ may thus be considered to be taken with respect to $\wh Z$.

The morphism $\wh V \to \wh Z$ in Proposition~\ref{small patch blowup} is finite.  
Thus $\prod_{U \in \mc W} F_U = \prod_{U \in \mc W_0} F_U \otimes_{F_P} E$, 
where $F_U$ on the left is taken with respect to $\wh V$ and $F_U$ on the right is taken with respect to $\wh Z$ (or $\wh Y$); and the analogous isomorphisms hold for the fields $F_Q$ and $F_\wp$.  (These isomorphisms are as in Proposition~\ref{connec_conds}(\ref{extension structure}) and \cite[Lemma~6.2]{HH:FP}, whose statements and proofs carry over to this somewhat more general situation.)
Applying Lemma~\ref{corestriction}(\ref{corestriction fact})
with respect to the field extension $E/F_P$,
we obtain the desired conclusion.

\medskip

{\em Case~II:} $\xi$ is a connected affine open subset $W \subset X$.

Recall that $\til W \subset \Spec(S)$ is the inverse image of $W \subset \Spec(\wh R_W)$.
Choose a non-empty finite subset $\mc P^*$ of closed points of $W$ that contains the image 
under $f:\Spec(S) \to \Spec(\wh R_W)$ of the points where $\til W \cup D$ is not regular, and 
let $\til{\mc P} = f^{-1}(\mc P^*) \subset  \til W \subset \Spec(S)$.  
Write $\til{\mc W}$ for the set of connected components of the complement of 
$\til{\mc P}$ in $\til W$, and $\til{\mc B}$ for the set of branches of 
$\til W$ at the points of $\til{\mc P}$.

Let $\mc W^*$ be the set of connected components of $W \smallsetminus \mc P^*$
and let $\mc B^*$ be the set of branches of $W$ at the points of $\mc P^*$.
Thus the elements of $\til{\mc W}$ are the inverse images under $f$ 
of the elements of $\mc W^*$,
and similarly for $\til{\mc B}$ and $\mc B^*$.  
By Proposition~\ref{local patching}, patching holds for the diamond
\(\displaystyle F_{W\bullet} = \bigl(F_W \le  \prod_{U \in \mc W^*} F_U , \,
\prod_{Q \in \mc P^*} F_Q
\le \prod_{\wp \in \mc B^*} F_\wp  \bigr).\)
Since $f$ is finite, 
$\prod_{U \in \til{\mc W}} F_U = \prod_{U \in \mc W^*} F_U \otimes_{F_W} E$, 
and similarly for the fields $F_Q$ and $F_\wp$,
as at the end of the proof of Case~I.  As in that proof,
patching holds for the diamond 
$\til E_\bullet := (E \le \prod_{U \in \til{\mc W}} F_U, \, 
\prod_{Q \in \til{\mc P}} F_Q \le
\prod_{\wp \in \til{\mc B}} F_\wp)$,
by Lemma~\ref{corestriction}(\ref{corestriction fact}).
That is, factorization and intersection hold for $G_\bullet:=\GL_n(\til E_\bullet)$ for all~$n$ (see Definition~\ref{patching prop def}).

Let $\til{\mc P}' \subseteq \til{\mc P}$ consist of the closed points of $\til W$ 
where $D$ is not a normal crossing divisor, and write 
$\wh{\mc P} = \til{\mc P} \smallsetminus \til{\mc P}'$.  
Thus $\til{\mc P} = \til{\mc P}' \sqcup \wh{\mc P}$.
Our strategy will be to blow up $\Spec(S)$ at the points of $\til{\mc P}'$, obtaining a refined diamond $E_\bullet$; and then to use that patching holds for 
$\til E_\bullet$ and the diamond arising from the exceptional locus to obtain the same for $E_\bullet$ via
Proposition~\ref{subfactor}.

For each $Q \in \til{\mc P}'$, consider
the complete local ring $\wh R_Q$ of $\Spec(S)$ at the point $Q$, with fraction field $F_Q$.
Thus $f(Q) \in \mc P^* \subset W \subset \wh W = \Spec(\wh R_W)$, and 
$F_Q$ is a finite separable extension of $F_{f(Q)}$, viz.\
a factor of $F_{f(Q)} \otimes_{F_W} E$.  Let $D_Q$ be the restriction of $\til W \cup D$ to $\Spec(\wh R_Q)$.

By Case~I, for each $Q \in \til{\mc P}'$, 
there is a birational projective morphism (viz.\ a composition of blowups) 
$\pi_Q':\wh V_Q' \to \Spec(\wh R_Q)$ for which 
conditions~(\ref{bl up normal})-(\ref{patching on bl up})
are satisfied,
with respect to the divisor $D_Q$,
some finite subset ${\mc P}_{\!Q}'$ of $V_Q' := \pi_Q'^{-1}(Q)$,
the associated sets $\mc W_{\!Q}'$ and $\mc B_{\!Q}'$, and the diamond
$F_{Q\bullet}' := (F_Q \le \prod_{Q' \in \mc P_{\!Q}'} F_{Q'}, \,
\prod_{U \in \mc W_{\!Q}'} F_U \le
\prod_{\wp \in \mc B_{\!Q}'} F_\wp)$.
In particular, patching holds for $F_{Q\bullet}'$ by~(\ref{patching on bl up}),
where $\mc P_{\!Q}',\mc W_{\!Q}'$ are non-empty since $Q \in \til{\mc P}'$.
That is, intersection and factorization hold for 
$\GL_n(F_{Q\bullet}')$ for all $n$.
By Lemma~\ref{enlarging diamonds}(\ref{product diamond}), these properties
also hold for $\GL_n(F_\bullet')$, where 
$F_\bullet' = \prod_{Q \in \til{\mc P}'} F_{Q\bullet}'$, 
with the product of diamonds being taken entry by entry.  

Since blowing up is local, we may take the corresponding blowups of $\Spec(S)$ at ideals respectively supported at the points $Q \in \til{\mc P}'$.
We thus obtain a projective birational morphism $\pi:\wh V \to \Spec(S)$ which is an isomorphism away from $\til{\mc P}'$,
and whose pullback under
$\Spec(\wh R_Q) \to \Spec(S)$
may be identified with $\wh V_Q'$, for $Q \in \til{\mc P}'$.
We may similarly regard $\til W \smallsetminus \til{\mc P}'$ 
as contained in $\wh V$.
With respect to these identifications, let
$\mc P' \subset V := \pi^{-1}(\til W)$ be the disjoint union of the sets 
$\mc P'_Q$ for $Q \in \til{\mc P}'$,
and similarly define $\mc W'$ and $\mc B'$.
Note that $\mc P:=\mc P' \sqcup \wh{\mc P}$ contains all the points 
of $V$ at which $V \cup D'$ is not regular,
where $D' = \pi^{-1}(D)$.  

Now properties~(\ref{bl up normal}),~(\ref{ncd}),~(\ref{crossing points}) hold for 
$\pi$ with respect to the divisor $D$ and the set $\mc P$, where~(\ref{ncd}) uses that $D \subset \Spec(S)$ is a normal crossing divisor away from $\til{\mc P}'$.
Let $\mc W$ be the set of connected components of $V \smallsetminus \mc P$, 
and let $\mc B$ be the set of branches of $V$ at the points of~$\mc P$.
Thus $\mc W = \til{\mc W} \sqcup \mc W'$ and 
$\mc B = \til{\mc B} \sqcup \mc B'$.
It remains to show that property~(\ref{patching on bl up}) is satisfied
for the diamond
$E_\bullet := (E \le \prod_{Q \in \mc P} F_Q, \,
\prod_{U \in \mc W} F_U \le
\prod_{\wp \in \mc B} F_\wp)$.  

It was shown above that for any $n$,
intersection and factorization hold for 
\[\til H_\bullet := \GL_n(F_\bullet')
= \bigl(\prod_{Q \in \til{\mc P}'} \GL_n(F_Q) \le \prod_{Q' \in {\mc P}'} \GL_n(F_{Q'}),
\prod_{U \in \mc W'} \GL_n(F_U) \le \prod_{\wp \in \mc B'}  \GL_n(F_\wp)\bigr).\]  
Applying Lemma~\ref{enlarging diamonds}(\ref{extend diamond})
with $A = \GL_n(\prod_{Q\in \wh{\mc P}} F_Q)$, it follows that these two
properties also hold for
the (non-injective) diamond
\[H_\bullet := \bigl(\prod_{Q \in \til{\mc P}} \GL_n(F_Q),\prod_{Q \in {\mc P}} \GL_n(F_Q),
\prod_{U \in \mc W'} \GL_n(F_U), \prod_{\wp \in \mc B'}  \GL_n(F_\wp)\bigr).\]
(Here we use that $\til{\mc P} = \til{\mc P}' \sqcup \wh{\mc P}$ and that $\mc P=\mc P' \sqcup \wh{\mc P}$.)  Consider the injective diamond
\[G'_\bullet := \GL_n(E_\bullet) = \bigl(\GL_n(E) \le \prod_{Q \in {\mc P}} \GL_n(F_Q), \, 
\prod_{U \in \mc W} \GL_n(F_U) \le \prod_{\wp \in \mc B} \GL_n(F_\wp)\bigr).\]
Since $\mc W = \til{\mc W} \sqcup \mc W'$ and 
$\mc B = \til{\mc B} \sqcup \mc B'$,
Proposition~\ref{subfactor} applies
to the diamonds $G_\bullet$, $G'_\bullet$, and $H_\bullet$, with respect to the 
following diagram:
\[
\xymatrix @1@=0.1pt@M=0.1pt@R=.4cm
{
& & & {\displaystyle\prod_{\wp \in \til{\mc B}} \GL_n(F_\wp)}
& & 	{\displaystyle\prod_{\wp \in \mc B'} \GL_n(F_\wp)} \\
& & \!\!\!\!\!{\displaystyle \prod_{U \in \til{\mc W}} \GL_n(F_U)} \ar[ru]
& & 	\!\!\!\!\!{\displaystyle\prod_{Q \in \mc P} \GL_n(F_Q)} \ar[lu] \ar[ru]
& & {\displaystyle\prod_{U \in \mc W'} \GL_n(F_U)} \ar[lu] \\
& & & & & {\displaystyle\prod_{Q \in \til{\mc P}} \GL_n(F_Q)} \ar[lu] \ar[ru] \\
& & & & \GL_n(E)  \ar[ru] \ar[lluu]
}\]
By parts~(\ref{subfactor 5}) and~(\ref{subfactor 6}) of that proposition,
it follows that intersection and factorization 
hold for $G_\bullet'= \GL_n(E_\bullet)$ for all $n$; i.e.\ patching holds for 
\(\displaystyle E_\bullet =  (E \le \prod_{Q \in \mc P} F_Q,
\prod_{U \in \mc W} F_U \le
\prod_{\wp \in \mc B} F_\wp).\)
\end{proof}

\begin{remark}
As the proof of Corollary~\ref{extension blowup both cases} shows, if the conclusion holds for a given choice of
$\wh V$ and of $\mc P \subset V$, and if $\mc P' \subset V$ is any other finite subset of $V$, then one can enlarge $\mc P$ so as to contain $\mc P'$ and still satisfy the conclusion of the corollary.
\end{remark}

\subsection{Factorization for diamonds of groups} \label{fact quad subsect}

In the situation of Section~\ref{patching section}, we can obtain results about factorization  for diamonds that arise from algebraic groups other than just $\GL_n$.  This is useful for obtaining local-global principles for torsors; see Theorem~\ref{factor local-global}.

\begin{prop} \label{diamond group implication}
Let $\wh X$ be a normal model of a one-variable function field $F$ over $K$, and let $G$ be an algebraic group over $F$.
In the notation of Proposition~\ref{big patch ok}, \ref{local patching}, or \ref{blowup patch},
let $\mc P'$ be a finite set of closed points of $X$ 
that contains $\mc P$.
In the context of Proposition~\ref{local patching}, assume also that $\mc P'$ contains $\overline{W} \smallsetminus W$.
Let $\mc W'$ be the set of components of $X \smallsetminus \mc P'$, and let $\mc B'$ be the set of branches of $X$ at the points of $\mc P'$.  
Set $F'_\bullet = (F\leq \prod_{U \in \mc W'} F_U, \ \prod_{Q \in \mc P'} F_Q
\leq \prod_{\wp \in \mc \mc B'} F_\wp)$.
If factorization holds for the diamond $G(F'_\bullet)$ then it holds for $G(F_\bullet)$, $G(F_{W\bullet})$, or $G(F_{P\bullet})$, respectively.
\end{prop}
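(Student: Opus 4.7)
The plan is to apply the refinement principle (Proposition~\ref{subfactor}) in each of the three settings, using the hypothesis of factorization for $G(F'_\bullet)$ together with an auxiliary diamond whose intersection property will be supplied by Proposition~\ref{local patching}, via Lemma~\ref{intersection for groups} and Lemma~\ref{enlarging diamonds}.

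\textbf{Big patch case (Proposition~\ref{big patch ok}).} For each $U \in \mc W$ with $\mc P' \cap U \ne \emptyset$, Proposition~\ref{local patching} produces an injective diamond
\[F'_{U\bullet} = \Bigl(F_U \le \prod_{U' \in \mc W'_U} F_{U'}, \, \prod_{Q \in \mc P' \cap U} F_Q \le \prod_{\wp \in \mc B'_U} F_\wp\Bigr)\]
for which patching, and in particular intersection, holds. Let $\mc W^\circ = \{U \in \mc W : \mc P' \cap U \ne \emptyset\}$. Using Lemma~\ref{enlarging diamonds}(\ref{product diamond}) I form the product $\prod_{U \in \mc W^\circ} G(F'_{U\bullet})$ and then use Lemma~\ref{enlarging diamonds}(\ref{extend diamond}) with $A = \prod_{U \in \mc W \setminus \mc W^\circ} G(F_U)$ to produce a diamond $H_\bullet$ whose entries compute to
\[H = \prod_{U \in \mc W} G(F_U), \quad H_1 = \prod_{U' \in \mc W'} G(F_{U'}), \quad H_2 = \prod_{Q \in \mc P' \setminus \mc P} G(F_Q), \quad H_0 = \prod_{\wp \in \mc B' \setminus \mc B} G(F_\wp),\]
noting that each $U \in \mc W \setminus \mc W^\circ$ occurs unchanged in $\mc W'$. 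By Lemma~\ref{intersection for groups}, each factor satisfies intersection, and hence so does $H_\bullet$. I then let $G_\bullet$ be the diamond obtained from $G(F_\bullet)$ by exchanging its two upper entries; by Lemma~\ref{factor ordering} this preserves factorization, and its right-upper corner is now the base of $H_\bullet$. A direct bookkeeping check identifies the diamond $G'_\bullet$ built from $G_\bullet$ and $H_\bullet$ by the refinement principle with $G(F'_\bullet)$ itself. Proposition~\ref{subfactor}(\ref{subfactor 2}) then yields factorization for $G_\bullet$, hence for $G(F_\bullet)$.

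\textbf{Local and blowup cases.} These I deduce from the big patch case by a second application of the refinement principle, mirroring the proofs of Propositions~\ref{local patching} and~\ref{blowup patch}. In the local case, after contracting the irreducible components of $X$ disjoint from $W$ (Remark~\ref{RW remark}(\ref{contraction remark})) one may assume $\overline W = X$, so that $X \smallsetminus W$ is a finite set of closed points, contained in $\mc P'$ by hypothesis. Setting $\mc P^* = \mc P \cup (X \setminus W) \subseteq \mc P'$ and letting $F^*_\bullet$ be the corresponding big patch diamond, the big patch case already proven gives factorization for $G(F^*_\bullet)$. Since $\mc P^* \cap W = \mc P$ and the components of $X \smallsetminus \mc P^*$ coincide with $\mc W$, one now applies Proposition~\ref{subfactor}(\ref{subfactor 1}) to $H_\bullet = G(F_{W\bullet})$, $G_\bullet = G(\til F_\bullet)$ with $\til F_\bullet = (F \le F_W, \prod_{Q \in X \setminus W} F_Q \le \prod_{\wp \in \til{\mc B}} F_\wp)$ (using Lemma~\ref{factor ordering} for the appropriate swap), and $G'_\bullet = G(F^*_\bullet)$, exactly as in the proof of Proposition~\ref{local patching}. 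This delivers factorization for $G(F_{W\bullet})$. The blowup case is handled in parallel, with $\mc P^* = \mc P$ and $F^*_\bullet = \wh F_\bullet$ the canonical big patch diamond of the proof of Proposition~\ref{blowup patch}, and with $G_\bullet = G(\til F_\bullet)$ for the outer diamond used there.

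\textbf{Main obstacle.} The crux is the big patch step, and specifically the verification that the diamond $G'_\bullet$ assembled by Proposition~\ref{subfactor} from the swapped $G(F_\bullet)$ and the auxiliary product $H_\bullet$ actually coincides with $G(F'_\bullet)$. This reduces to the decompositions $\mc W' = \bigsqcup_{U \in \mc W} \mc W'_U$ (where $\mc W'_U = \{U\}$ for $U \in \mc W \setminus \mc W^\circ$), $\mc P' \setminus \mc P = \bigsqcup_{U \in \mc W} (\mc P' \cap U)$, and $\mc B' \setminus \mc B = \bigsqcup_{U \in \mc W^\circ} \mc B'_U$, together with the compatibility of the natural inclusion maps---routine but requiring care.
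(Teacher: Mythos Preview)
Your big-patch argument is correct and genuinely different from the paper's. The paper proceeds by induction on $|\mc P' \smallsetminus \mc P|$, adding one point at a time and invoking Proposition~\ref{local patching} for a single $U \in \mc W$ at each step; you instead handle all of $\mc P' \smallsetminus \mc P$ at once by forming the product of the diamonds $G(F'_{U\bullet})$ over those $U \in \mc W$ meeting $\mc P'$, padding with the trivial factor for the remaining $U$, and checking that the refined diamond $G'_\bullet$ coincides with $G(F'_\bullet)$. Your bookkeeping decompositions are right, and the intersection property for $H_\bullet$ follows as you say from Lemma~\ref{intersection for groups} and Lemma~\ref{enlarging diamonds}. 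This all-at-once route is slightly slicker; the paper's induction avoids having to assemble and verify the product diamond but needs the observation that every intermediate $\mc P''$ is again allowable.

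There is, however, a gap in your local case. You contract the components of $X$ disjoint from $W$ and then assert that you may assume $\overline W = X$. The conclusion (factorization for $G(F_{W\bullet})$) is indeed unaffected by this contraction, but the \emph{hypothesis} is not: the diamond $F'_\bullet$ is defined on the original model $\wh X$, and after contraction there is no obvious diamond on the new model to which your factorization hypothesis transfers. (Points of $\mc P'$ on contracted components collapse, and components of $X \smallsetminus \mc P'$ lying on contracted components disappear entirely.) One can repair this with an additional refinement argument in the spirit of Proposition~\ref{blowup patch}, but that is real extra work you have not supplied. The paper avoids the issue entirely: it does not contract, but instead first uses the big-patch case to replace $\mc P'$ by $\mc P'' = \mc P' \smallsetminus (\mc P' \cap W \smallsetminus \mc P)$, then takes $\til{\mc P} = \mc P'' \smallsetminus \mc P$ (which lies in $X \smallsetminus W$ and contains $\overline W \smallsetminus W$), so that $W$ is a component of $X \smallsetminus \til{\mc P}$ on the original model. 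The auxiliary diamond $H_\bullet$ then needs the padding factor $A = \prod_{U \in \til{\mc W} \smallsetminus \{W\}} G(F_U)$ via Lemma~\ref{enlarging diamonds}(\ref{extend diamond}), which you were trying to avoid by contracting. Your blowup case is fine and essentially parallels the paper's.
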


\begin{proof}
For short write $F'_\bullet = (F\leq F_1',F_2'\leq F_0')$.  We consider each of the three cases in turn.

\textit{Case of Proposition~\ref{big patch ok}.}
Let $n$ be the number of points in $\mc P'$ that are not in $\mc P$.  By induction we are reduced to the case 
that 
$n=1$, since each set $\mc P''$ with $\mc P \subset \mc P'' \subset \mc P'$ is also an allowable finite subset of $X$ in the notation of Proposition~\ref{big patch ok}.
Write $\mc P' = \mc P \sqcup \{P\}$.

Let $W$ be the unique element of $\mc W$ that contains $P$, and let 
$W' = W \smallsetminus \{P\}$.  Consider the associated diamond $F_{W\bullet}$, defined as in Proposition~\ref{local patching} with respect to the set $\{P\} \subset W$.  Thus 
$F_{W\bullet} = (F_W \le \prod_{U \in \mc W'_P} F_U,\, F_P \le \prod_{\wp \in \mc B_P} F_\wp)$, where $\mc W'$ be the set of connected components of $W'$, and where
$\mc B_P$ is the set of branches of $W$ at $P$.
Patching holds for $F_{W\bullet}$ by Proposition~\ref{local patching}; and in particular the intersection property holds for $F_{W\bullet}$ (see Definition~\ref{patching prop def}).   

Let 
\[G_\bullet = G(F_\bullet) = (G(F)\,\leq\, \prod_{U \in \mc W}
  G(F_U), \ \prod_{Q \in \mc P} G(F_Q)\,\leq\, \prod_{\wp \in \mc B} G(F_\wp)),\] 
\[G'_\bullet = G(F'_\bullet) = (G(F)\leq \prod_{U \in \mc W'} G(F_U), \ \prod_{Q \in \mc P'} G(F_Q)
\leq \prod_{\wp \in \mc B'} G(F_\wp)),\]
\[\til H_\bullet = G(F_{W\bullet}) = (G(F_W) \le \prod_{U \in \mc W'} G(F_U),\, G(F_P) \le \prod_{\wp \in \mc B_P} G(F_\wp)).\]
By Lemma~\ref{intersection for groups}, intersection holds for $\til H_\bullet$ since it holds for $F_{W\bullet}$.
Let $A = \prod_{U \in \mc W \smallsetminus \{W\}} G(F_U)$.  Let
$H_\bullet$ be the coordinate-wise product of diamonds  
$\til H_\bullet \times (A,A,1,1)$.  That is, 
\[H_\bullet = (\prod_{U \in \mc W} G(F_U) \le \prod_{U \in \mc W'} G(F_U),\, G(F_P) \le \prod_{\wp \in \mc B_P} G(F_\wp)).\]  
Then intersection holds for 
$H_\bullet$ by Lemma~\ref{enlarging diamonds}(\ref{extend diamond}).
Using Lemma~\ref{factor ordering}, 
the desired conclusion then follows from Proposition~\ref{subfactor}(\ref{subfactor 2}), 
with $G_\bullet$, $G'_\bullet$, and $H_\bullet$ as above, 
with respect to the following refinement diagram:

\centerline{\xymatrix @R=.5cm @C=.7cm {
& \prod_{\wp \in \mc B} G(F_\wp) & & \prod_{\wp \in \mc B_P} G(F_\wp) \\
\prod_{Q \in \mc P} G(F_Q) \ar[ru] & & \prod_{U \in \mc W'} G(F_U) \ar[lu]
\ar[ru] & & G(F_P) \ar[lu] \\
& & & \prod_{U \in \mc W} G(F_U) \ar[lu] \ar[ru] \\
& & G(F) \ar[lluu]\!\! \ar[ru]
 }}

\smallskip

\textit{Case of Proposition~\ref{local patching}.}
Let $\mc P''$ be the subset of $\mc P'$ obtained by deleting those points that lie in $W$ but not in $\mc P$,
and consider the corresponding diamond $F''_\bullet$.
By the case
of Proposition~\ref{big patch ok}, it follows that factorization holds for $G(F''_\bullet)$.  So after replacing $\mc P'$ by $\mc P''$, we may assume that $\mc P' \cap W = \mc P$.

Write $\mc P'$ as a disjoint union $\mc P \sqcup  \tilde{\mc P}$.  Thus $\tilde{\mc P}$ is disjoint from $W$.  Let $\tilde{\mc W}$ be the set of connected components of the complement of $\tilde{\mc P}$ in $X$.  Thus $W \in \tilde{\mc W}$, using the hypothesis that $\mc P'$ contains $\overline{W} \smallsetminus W$.
Let $\tilde{\mc B}$ be the set of branches of $X$ at the points of $\tilde{\mc P}$.  Consider the diamond $\tilde F_\bullet = (F\le\tilde F_1,\tilde F_2 \le \tilde F_0)$, where $\tilde F_1,\tilde F_2,\tilde F_0$ are defined analogously to $F_1',F_2',F_0'$.
Write $G_\bullet = G(\tilde F_\bullet)$, $G'_\bullet = G(F'_\bullet)$, and $\til H_\bullet = G(F_{W\bullet})$.  
Letting $A = \prod_{U \in \til {\mc W} \smallsetminus \{W\}} G(F_U)$,
and setting $H_\bullet = \til H_\bullet \times (A,A,1,1)$ as in the first case, 
we then obtain a refinement diagram as in that case.
Using Lemma~\ref{factor ordering}, it follows from
Proposition~\ref{subfactor}(\ref{subfactor 1})
that $H_\bullet$ satisfies factorization.  
By Lemma~\ref{enlarging diamonds}(\ref{extend diamond}), so does 
$\til H_\bullet = G(F_{W\bullet})$.

\smallskip

\textit{Case of Proposition~\ref{blowup patch}.}
We proceed analogously to the case of Proposition~\ref{local patching}.
As in that case, we may assume that $\mc P' \cap V = \mc P$, via the case
of Proposition~\ref{big patch ok}.  Write $\mc P' = \mc P \sqcup  \mc P''$, so that $\mc P'' \subset X$ is disjoint from $V$.  Identifying $X \smallsetminus V$ with $Y \smallsetminus \{P\}$,
we may view $\mc P''$ as a subset of $Y \smallsetminus \{P\}$.
Let $\tilde{\mc P} = \mc P'' \sqcup \{P\} \subset Y$.
Let $\tilde{\mc W}$ be the set of connected components of the complement of $\tilde{\mc P}$ in $Y$, and $\tilde{\mc B}$ the set of branches of $Y$
at the points of~$\tilde{\mc P}$.  Consider the diamond $\tilde F_\bullet = (F\le\tilde F_1,\tilde F_2 \le \tilde F_0)$, where $\tilde F_1,\tilde F_2,\tilde F_0$ are defined as in the previous case (though with respect to $\wh Y$, rather than $\wh X$; note that $F$ is also the function field of $\wh Y$).
Write $G_\bullet = G(\tilde F_\bullet)$, $G'_\bullet = G(F'_\bullet)$, and $\til H_\bullet = G(F_{P\bullet})$.  Setting
$A = \prod_{Q \in \til P \smallsetminus \{P\}} G(F_Q)$
and $H_\bullet = \til H_\bullet \times (A,A,1,1)$ as before, we again obtain a refinement diagram.  As in the previous case, 
using Lemma~\ref{factor ordering}, 
factorization for $H_\bullet$ follows from 
Proposition~\ref{subfactor}(\ref{subfactor 1}).
Lemma~\ref{enlarging diamonds}(\ref{extend diamond}) then implies
that factorization also holds for $\til H_\bullet = G(F_{P\bullet})$.
\end{proof}

\begin{cor} \label{rational fact cor}
Under the hypothesis of Proposition~\ref{big patch ok},~\ref{local patching}, or~\ref{blowup patch},
let $G$ be a rational connected linear algebraic group over $F$.
Then factorization holds for the diamond $G(F_\bullet)$, $G(F_{W\bullet})$, or $G(F_{P\bullet})$, respectively.
\end{cor}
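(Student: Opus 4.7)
The plan is to use the refinement mechanism provided by Proposition~\ref{diamond group implication} to reduce all three cases to the classical patching setup of \cite{HHK}, where factorization for rational connected linear algebraic groups is already known.

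More concretely, I would first choose an auxiliary finite set of closed points $\mc P'$ of $X$, containing $\mc P$, with the following additional properties: (a) $\mc P'$ meets every irreducible component of $X$; (b) $\mc P'$ contains every point at which two components of $X$ meet; and (c) in the case of Proposition~\ref{local patching}, $\mc P'$ contains $\overline{W}\smallsetminus W$. (In the case of Proposition~\ref{blowup patch}, the setup already furnishes enough points on the exceptional locus; one simply ensures that the additional points on $\tilde Y$ are chosen as in (a),(b).) With these requirements in force, every connected component $U \in \mc W'$ of $X \smallsetminus \mc P'$ is an open subset of a single irreducible component of $X$, so $F'_\bullet = (F \le \prod_{U \in \mc W'} F_U, \prod_{Q \in \mc P'} F_Q \le \prod_{\wp \in \mc B'} F_\wp)$ is of the shape considered in the original patching framework of \cite{HHK}, with irreducible patch sets.

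Next, for this refined diamond $F'_\bullet$, I would invoke the factorization theorem for rational connected linear algebraic groups in the standard patching setup, namely the result of \cite{HHK} (see also \cite{HHK:H1}) stating that for any rational connected linear algebraic group $G$ over $F$, the diamond $G(F'_\bullet)$ satisfies the factorization property. The proof there proceeds by using a birational isomorphism between $G$ and an affine space to approximate elements of $G(\prod F_\wp)$ by elements of $G(\prod F_Q)$ up to elements of $G(\prod F_U)$; these approximation arguments go through unchanged over $F'_\bullet$ after the refinement above.

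Finally, I would apply Proposition~\ref{diamond group implication} to transfer factorization from $G(F'_\bullet)$ back to $G(F_\bullet)$, $G(F_{W\bullet})$, or $G(F_{P\bullet})$ as appropriate. The conditions on $\mc P'$ imposed in Proposition~\ref{diamond group implication} (in particular the requirement $\mc P' \supseteq \overline{W}\smallsetminus W$ in the local patching case) are exactly what was arranged in step~(c) above. The main work is really the bookkeeping of ensuring that the enlarged set $\mc P'$ simultaneously satisfies the refinement hypotheses of Proposition~\ref{diamond group implication} and places $F'_\bullet$ into the classical framework; once this is set up, the rest is an invocation of the refinement principle and the known result for rational groups. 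The principal obstacle is thus bureaucratic rather than substantive: choosing $\mc P'$ so that both sides of the reduction are legitimate.
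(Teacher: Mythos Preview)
Your proposal is correct and follows essentially the same approach as the paper: enlarge $\mc P$ to a set $\mc P'$ containing the crossing points of $X$ (and $\overline W\smallsetminus W$ in the local case), invoke \cite[Theorem~3.6]{HHK} to get factorization for $G(F'_\bullet)$, and then apply Proposition~\ref{diamond group implication}. Your extra condition~(a) that $\mc P'$ meet every component is harmless but not explicitly stated in the paper's proof; otherwise the arguments coincide.
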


\begin{proof}
Choose a finite set $\mc P'$ of closed points of $X$ that contains
$\mc P$ and all the points where irreducible components of $X$ meet.  In the context of
Proposition~\ref{local patching}, assume also that $\mc P'$ contains $\overline{W} \smallsetminus W$.  With notation as in the statement of Proposition~\ref{diamond group implication},
factorization holds for the diamond $G(F'_\bullet)$
by \cite[Theorem~3.6]{HHK}
(using that the simultaneous factorization condition in \cite{HHK} is equivalent to the
factorization assertion for diamonds; see \cite[Section~2.1.3]{HHK:Hi}).
Hence the conclusion follows by
Proposition~\ref{diamond group implication}.
\end{proof}

In the situation of Corollary~\ref{rational fact cor}, it then follows
from Theorem~\ref{factor local-global} that if $\mc V$ is the
class of $F$-varieties $V$ with a $G$-action such that $G(\prod_{\wp \in \mc B} F_\wp)$ acts transitively on $V(\prod_{\wp \in \mc B} F_\wp)$, then the local-global principle holds for $\mc V$ with respect to $F_\bullet$.
The corresponding local-global assertions also follow for varieties over $F_W$ or $F_P$ (satisfying transitivity over $\prod_{\wp \in \mc B} F_\wp$), with respect to $F_{W\bullet}$ or $F_{P\bullet}$, respectively.

\begin{remark}
Proposition~\ref{diamond group implication} similarly implies the conclusion of Corollary~\ref{rational fact cor} for any linear algebraic group $G$ over $F$ such that factorization holds for $G$ with respect to any choice of a non-empty finite subset $\mc P'$ of $X$ that includes all the points where distinct components meet.  This includes all connected retract rational groups $G$, by \cite{Kra}.  As shown in \cite[Corollary~6.5]{HHK:H1},
if the reduction graph associated to $F$ is a tree then this property also holds for all linear algebraic groups $G$ over $F$ that are rational but not necessarily connected (i.e.\ each connected component is $F$-rational).
\end{remark}

\section{Applications to local-global principles and field invariants} \label{applications}

We now apply the above results in order to obtain applications in the contexts of quadratic forms and central simple algebras.  
These applications, which concern local-global principles and invariants of fields, extend and build on results that appeared in \cite{HHK}, \cite{HHK:H1}, and \cite{HHK:Weier}, as well as in \cite{Leep}, \cite{HuDiv}, \cite{HuLGP}, \cite{HuLaurent}, and \cite{PS:PIu}.  
The fields we consider will be finite extensions of fields $F_P$ and $F_U$, for $P$ a closed point and $U$ a connected open subset of the closed fiber of a curve over a complete discrete valuation ring,
in the notation of Section~\ref{patches}.

\subsection{Applications to quadratic forms} \label{quad form section}

Here we present applications to quadratic forms, 
concerning local-global principles and invariants of fields, especially the $u$-invariant.  
We focus on the fields arising in Section~\ref{patches} and finite separable extensions of these, in particular proving results that generalize and extend assertions in \cite[Section~9.2]{HHK:H1}, and \cite[Section~4.1]{HHK:Weier}
regarding the Witt ring, Witt index, and $u$-invariant.  
As a consequence, we obtain a local-global result for the value of the $u$-invariant (Corollary~\ref{uinv max}).  
Due to \cite{PS:PIu}, this latter result also applies in the case of mixed characteristic $(0,2)$, which is often avoided in quadratic form theory.  
Afterwards we obtain a result (Theorem~\ref{combined thm qf}) concerning the value of the $u$-invariant for finite separable extensions of fields such as $k((x,t))$ and the fraction field of  $k[x][[t]]$, as well as mixed characteristic analogs of such fields.  

We begin with local-global principles, starting with a more general result in
the abstract context of diamonds.  The reader is referred to \cite{Lam} for
basic notions such as isotropic and hyperbolic forms, the Witt ring and
fundamental ideal,  the Witt index, and the $u$-invariant.

If $F_v$ are fields (for $v$ in some index set), we define $W(\prod
F_v):=\prod W(F_v)$ and $I(\prod F_v):=\prod I(F_v)$. Recall that for any
field $E$ and quadratic form $q$ over $E$, $H^1(E,\SO(q))$ classifies quadratic forms of the same dimension
and discriminant as $q$, with $q$ corresponding to the distinguished element
of the Galois cohomology set; see \cite[29.29]{BofInv}.  
(In part~(\ref{ker of map on Witt}) below we write $\mu_2$ rather than $\mbb Z/2 \mbb Z$ as in \cite{HHK:H1}; but these are equivalent since the characteristic is not two.)

\begin{thm} \label{quad form abstract applic}
Suppose that $F_\bullet=(F\leq F_1,F_2\leq F_0)$ is a diamond of rings with $F$ a field of characteristic unequal to two, and each $F_i$ a finite direct product of fields.  
Write $F_i = \prod_{v \in \mc V_i} F_v$ where each $F_v$ is a field. Assume moreover that patching holds for $F_\bullet$ and 
that we have factorization for the diamond
$\SO(nh)(F_\bullet)$
for each $n > 0$, where $h$ is a hyperbolic plane $\<1,-1\>$.
Then:
\renewcommand{\theenumi}{\alph{enumi}}
\renewcommand{\labelenumi}{(\alph{enumi})}
\begin{enumerate}
\item \label{local-global Witt}
We have an exact sequence 
\[0 \to P_{F_\bullet} \to \W(F) \to \W(F_1) \times \W(F_2) \to
\W(F_0),\]
where the map on the right is given by taking the difference of the
restrictions of the two Witt classes, and where $P_{F_\bullet}$
is the subgroup of $\W(F)$ consisting of classes of locally hyperbolic binary
Pfister forms; i.e.\ forms $\<1,-d\>$ where $d \in F^\times$ becomes a square in each $F_i$.

\item \label{ker of map on Witt}
The group $P_{F_\bullet}$ is naturally isomorphic to the kernel of the local-global map
\[H^1(F, \mu_2) \to \prod_v H^1(F_v,
\mu_2),\]
i.e.\ the group of square classes in $F^\times$ that become the trivial square class in each $F_v^\times$.

\item \label{factorization for SO}
For every quadratic form $q$ over $F$, factorization holds for the diamond of
groups $\SO(q)(F_\bullet)$.

\item \label{abstract local-global for isotropy}
The local-global principle for isotropy holds for quadratic forms over $F$
that are of dimension unequal to two, and for binary forms that do not lie in $P_{F_\bullet}$.  That is, if such a form $q$ becomes isotropic over $F_v$ for each $v \in \mc V_1 \cup \mc V_2$, then $q$ is isotropic.

\item \label{abstract Witt index}
If $q$ is a regular quadratic form over $F$ then 
\(\wi(q) = \min \{\wi(q_v) \ | \ v \in \mc V_1 \cup \mc V_2
\} - \varepsilon\), where $\varepsilon=1$ if 
$q$ represents a non-trivial class in $P_{F_\bullet}$ and otherwise $\varepsilon=0$.
\end{enumerate}
\end{thm}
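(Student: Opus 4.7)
The plan is to establish (c) first, and then to use Theorem~\ref{factor local-global} to derive (d) and (e), while analyzing the Witt-group kernel to prove (a) and (b). Part (c) upgrades the factorization hypothesis from the hyperbolic group $\SO(nh)$ to $\SO(q)$ for any regular $q$. My approach would exploit the Witt isomorphism $q \perp (-q) \cong nh$ with $n = \dim q$, embedding $\SO(q) \hookrightarrow \SO(nh)$ as the stabilizer of the decomposition of $nh$ into its mutually orthogonal $q$- and $(-q)$-components, and then using patching for quadratic spaces (a consequence of patching for $F_\bullet$ via Example~\ref{other categories}) together with Witt's cancellation theorem to adjust an $\SO(nh)$-factorization of a given element of $\SO(q)(F_0)$ so that the factors preserve this decomposition. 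This is the main technical step.

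For (b), observe that $H^1(F,\mu_2) = F^\times/F^{\times 2}$ (since $\cha F \neq 2$), so the kernel of the local-global map consists of classes of elements $d \in F^\times$ that become squares in each $F_v$. The assignment $[d] \mapsto [\<1,-d\>]$ identifies this kernel with $P_{F_\bullet}$, and it is injective since $\<1,-d\>$ is hyperbolic iff $d \in F^{\times 2}$. For (a), the inclusion $P_{F_\bullet} \subseteq \ker$ is immediate. For the reverse, I would take an anisotropic representative $q$ of a kernel class; local hyperbolicity yields isometries $\phi_i \colon q|_{F_i} \cong mh$, whose discrepancy $\phi_2 \circ \phi_1^{-1}$ is an element of $O(mh)(F_0)$. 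Using the short exact sequence $1 \to \SO(mh) \to O(mh) \to \mu_2 \to 1$ and the hypothesized factorization for $\SO(mh)(F_\bullet)$, one disposes of the $\SO$-part of this obstruction, leaving a $\mu_2$-class that is precisely the discriminant of $q$; by (b) this discriminant lies in $P_{F_\bullet}$. Subtracting the class $[\<1,-\disc q\>]$ reduces to the trivial-discriminant case, where patching of quadratic spaces forces $q$ to be hyperbolic and hence $[q]=0$. Exactness at $W(F_1)\times W(F_2)$ is obtained by Witt cancellation: a pair $(w_1,w_2)$ with matching Witt classes over $F_0$ may be represented by forms of equal dimension (after adding hyperbolic planes) that are isomorphic over $F_0$, and these patch to a form over $F$.

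With (c), Theorem~\ref{factor local-global} yields local-global principles for $\SO(q)$-torsors and homogeneous spaces. Part (d) for $\dim q \ge 3$ then follows by applying this to the projective quadric $\{q=0\}$, which is homogeneous under $\SO(q)$ over any field over which $q$ is isotropic, by Witt's theorem; the one-dimensional case is trivial. Binary forms excluded from $P_{F_\bullet}$ are precisely those not locally hyperbolic in $W$; since local isotropy coincides with local hyperbolicity for binary forms, a locally isotropic such $q$ would satisfy $[q] \in \ker = P_{F_\bullet}$ by (a), a contradiction. For (e), I would induct on $\min_v \wi(q_v)$. The base case zero forces $q$ itself to be anisotropic, and $q$ representing a non-trivial class in $P_{F_\bullet}$ is impossible there since such classes have local Witt index $\ge 1$. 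For the inductive step, (d) provides global isotropy unless $\dim q = 2$ and $[q]$ is a non-trivial class in $P_{F_\bullet}$; in the generic case we split off a hyperbolic plane $q \cong h \perp q'$ (with $[q'] = [q]$ in $W(F)$) and apply induction, while in the exceptional binary case $\wi(q)=0$ and $\min_v \wi(q_v)=1$, matching the $\varepsilon=1$ correction. The main obstacle will be the delicate step in (c) of transferring factorization from $\SO(nh)$ to $\SO(q)$, together with the careful discriminant bookkeeping in (a).
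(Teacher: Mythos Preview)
Your arguments for parts (a), (b), (d), and (e) are essentially the paper's, modulo presentation. The real divergence is in (c), and there the proposed direct argument has a genuine gap.

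You want to embed $\SO(q)\hookrightarrow \SO(nh)$ via $nh\cong q\perp(-q)$, factor a given $g\in\SO(q)(F_0)$ as $(g,1)=a_1a_2$ with $a_i\in\SO(nh)(F_i)$, and then ``adjust'' so that each $a_i$ preserves the decomposition. Concretely this produces subspaces $W_i:=a_1^{-1}(V_q)_{F_1}=a_2(V_q)_{F_2}$ which patch to a subspace $W\subset nh$ over $F$ with $W_{F_i}\cong q_{F_i}$. To finish you need $W\cong q$ over $F$ so that Witt extension supplies a global isometry of $nh$ carrying $V_q$ to $W$; but ``$W$ locally isometric to $q$ implies $W\cong q$'' is precisely the local--global principle for $\SO(q)$- (or $O(q)$-) torsors, which by Theorem~\ref{factor local-global} is equivalent to (c) itself. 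Witt cancellation does not rescue this: from $W\perp W^\perp\cong q\perp(-q)$ you would need $W^\perp\cong -q$, which is the same problem for the complement. So the adjustment step is circular.

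The paper avoids this by reversing the order: it proves (a) first, using only the hypothesized factorization for $\SO(mh)$ (exactly as your own sketch of (a) does), and then derives (c) from (a) in one line. Namely, if $[q']\in H^1(F,\SO(q))$ is locally trivial, then $q\perp -q'$ has even dimension, trivial discriminant (since $q,q'$ share discriminant), and is locally hyperbolic, so its Witt class lies in $P_{F_\bullet}$ by (a). But every nontrivial element of $P_{F_\bullet}$ is represented by a form $\langle 1,-d\rangle$ with $d\notin F^{\times 2}$, hence has nontrivial discriminant; therefore $q\perp -q'$ is hyperbolic and $q\cong q'$. Local--global for $\SO(q)$-torsors follows, and with it factorization by Theorem~\ref{factor local-global}. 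Since your proof of (a) already uses only the $\SO(mh)$ hypothesis and not (c), you can simply drop the direct attack on (c) and insert this deduction after (a); the rest of your outline then goes through unchanged.
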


\begin{proof}
{\em Proof of part~(\ref{local-global Witt})}:
Let 
$P_{F_\bullet}$  be the kernel of the diagonal map $\W(F) \to \W(F_1) \times \W(F_2)$,
and let $\alpha \in P_{F_\bullet} \subseteq
\W(F)$.  Thus $\alpha_{F_v} = 0$ for each $v$.  Here $\alpha$ is the
class of a quadratic form $q$ such that
$q_{F_v}$ is hyperbolic; hence $q$ is of even dimension 
$2n$. Let $d$ be the discriminant of $q$ and let $b = \left<1,
-d\right>$.  Since $q_{F_v}$ has trivial discriminant for
each $v$, it follows that $b_{F_v}$ is hyperbolic for each $v$; i.e.\ $d \in (F_v^\times)^2$.  Hence the form $q' = q \perp -b$ has trivial discriminant and is
hyperbolic for each $v$.  Thus $q'$ corresponds to a class in $H^1(F,
\SO((n+1)h))$. 

Let $H$ be the $\SO((n+1)h)$-torsor corresponding to $q'$. Since $q'_{F_v}$
is hyperbolic for each $v$, we see that $H(F_i) \neq \varnothing$ for $i
= 1, 2$. But by Theorem~\ref{factor local-global}, since factorization holds for
$\SO((n+1)h)(F_\bullet)$, it follows that $H(F)$ is non-empty. Hence the torsor $H$ is
split, and so $q' = q \perp -b$ is hyperbolic. But this implies that
$q$ is equivalent to the binary Pfister form $b$.

For exactness on the right, suppose that we have Witt
classes $\alpha_i \in \W(F_i)$ such that $(\alpha_1)_{F_0} =
(\alpha_2)_{F_0}$. We wish to show that there is a class $\alpha \in
\W(F)$ such that $\alpha_{F_i} = \alpha_i$ for $i = 1, 2$. To begin,
we choose representative forms $q_i$ over $F_i$ with class $\alpha_i$
of the same dimension $n$. Since these forms become Witt equivalent
over $F_0$, and since they have the same dimension, 
they necessarily become isometric over $F_0$. But the category of
quadratic forms of dimension $n$ under isometry is equivalent to the
category of $\O(n\<1\>)$-torsors; so by Theorem~\ref{patching prop thm}, 
there is a quadratic form $q$ over $F$ such that $q_{F_i} \cong q_i$, as
desired.

\medskip
{\em Proof of part~(\ref{ker of map on Witt})}: 
Since elements of $P_{F_\bullet}$ are represented by binary forms, they lie in the fundamental ideal $I(F)$.  So the exact sequence in part~(\ref{local-global Witt}) restricts to an exact sequence \[0 \to P_{F_\bullet} \to I(F) \to I(F_1) \times I(F_2) \to
I(F_0).\]

We claim that the induced map $P_{F_\bullet} \to I(F)/I^2(F)$ is injective. 
To see this, observe that
if $q$ is a quadratic form whose Witt class is in $P_{F_\bullet}
\cap I^2(F)$, then $q$ has trivial discriminant and even
dimension $2n$, and thus corresponds to a class $\alpha \in H^1(F,
\SO(nh))$ with $\alpha_{F_v}$ trivial for each $v$.
But since
factorization holds for $\SO(nh)(F_\bullet)$, it follows from
Theorem~\ref{factor local-global} that $\alpha$ is split and hence $q$ is hyperbolic.  
Consequently, the above map is injective.

Consider the composition
\[P_{F_\bullet} \to I(F) \to I(F)/I^2(F) \iso F^\times/(F^\times)^2 \iso H^1(F,
\mu_2),\]
which is thus also injective.  Its image is contained in the kernel of the map
$H^1(F, \mu_2) \to \prod_v H^1(F_v, \mu_2)$,
by the definition of $P_{F_\bullet}$ and the functoriality the isomorphism 
$I(F)/I^2(F) \iso H^1(F,\mu_2)$.  The reverse containment follows from 
the description of $P_{F_\bullet}$ in part~(\ref{local-global Witt}), together with 
the fact that the map $I(F) \to F^\times/(F^\times)^2$ takes the class of the quadratic form $\<1,-d\>$ to the square class of $d$.  Hence we obtain the asserted isomorphism.

\medskip

{\em Proof of part~(\ref{factorization for SO})}:
Let $n = \dim q$. Suppose that $q'$
is a quadratic form class with $[q'] \in H^1(F, \SO(q))$ such that
$[q']_{F_v}$ is trivial in $H^1(F_v, \SO(q))$ for each $v$. Then $q \perp -q'$ is a quadratic form over $F$ of
even dimension and trivial discriminant that is trivial over each
$F_v$; and hence its Witt class lies in $P_{F_\bullet}$. Since none of the
nontrivial elements of $P_{F_\bullet}$ have trivial
discriminant, it follows that $q \perp -q'$ is hyperbolic, and that
$q$ and $q'$ are isometric (being of the same dimension).  Hence $[q']$ is trivial in $H^1(F,\SO(q))$. 
This shows that the local-global principle holds for
$\SO(q)$-torsors. By Theorem~\ref{factor local-global}, it follows
that factorization holds for $\SO(q)$.

\medskip

{\em Proof of part~(\ref{abstract local-global for isotropy})}:
We prove the contrapositive; i.e.\ if $q$ is anisotropic then so is some $q_{F_v}$.  This is clear if $q$ is binary but not in the kernel $P_{F_\bullet}$ of the local-global map on Witt rings, 
since a binary form is hyperbolic if and only if it is isotropic.  
So now assume that the anisotropic form $q$ is not binary.  The group $\O(q)$ acts on the projective quadric hypersurface $Q$ defined by $q$, and the action of $\O(q)(F_0)$ is transitive on $Q(F_0)$ by 
the Witt Extension Theorem (see the proof of \cite[Theorem~4.2]{HHK}).
Since $\dim(q)>2$, it follows that $Q$ is connected; and hence $\SO(q)(F_0)$ also acts transitively on $Q(F_0)$. 
By part~(\ref{factorization for SO}) above, factorization holds for
$\SO(q)(F_\bullet)$.  Hence Theorem~\ref{factor local-global}
implies that the local-global principle holds for $Q$.  If $q$ is
anisotropic, then $Q(F)$ is empty and thus some $Q(F_v)$ must be
empty; i.e.\ $q_{F_v}$ is
anisotropic.
 
\medskip

{\em Proof of part~(\ref{abstract Witt index})}:
By Witt decomposition, we are reduced to the case that 
$q$ is anisotropic.    
The desired assertion is clear if the class of $q$ is in $P_{F_\bullet}$, so assume otherwise.  Then some $q_{F_v}$ is anisotropic by part~(\ref{abstract local-global for isotropy}).  Hence $\wi(q)=\min\{\wi(q_v)\}=0$, and thus $\varepsilon=0$ as asserted.
\end{proof}

This abstract result can in particular be applied to the concrete situation of Section~\ref{patches}.

We recall the standing hypotheses: $T$ is a complete discrete valuation ring
with fraction field $K$ and residue field $k$; and $\wh X$ is a projective normal $T$-curve with fraction field $F$ and closed fiber $X$.   
For the remainder of this section on quadratic forms, we {\em additionally assume} that $K$ (or equivalently, $F$) has characteristic unequal to two.  

Theorem~\ref{quad form abstract applic} yields local-global principles for quadratic forms in this context:

\begin{example} \label{quad form local-global on patches}
In the situation of Proposition~\ref{big patch ok},~\ref{local patching}, or~\ref{blowup patch}, 
write $L$ for the field $F$, $F_W$, or $F_P$, respectively. Recall that we
assume $\cha(L) \ne 2$.  
Theorem~\ref{quad form abstract applic} applies because those three
propositions say that patching holds for the given diamond, and because 
factorization holds for $\SO(nh)(F_\bullet)$ 
by Corollary~\ref{rational fact cor} (since
$\SO(nh)$ is a rational connected linear algebraic $F$-group by the Cayley parametrization; e.g.\ see \cite[Remark~4.1]{HHK}).    
\end{example}

The local-global principles given in Example~\ref{quad form local-global on patches} can be carried over to the situation of points on the closed fiber.  First we prove a lemma, with notation as above.

\begin{lemma} \label{approx quad form}
Let $X_0$ be an irreducible component of $X$, with generic point $\eta$.  Let $U_0$ be a non-empty affine open subset of $X_0$ that meets no other irreducible component of $X$, and let $q$ be a quadratic form over $F_{U_0}$.  If $q$ becomes isotropic over $F_\eta$ then $q$ is isotropic over $F_U$ for some non-empty affine open subset $U \subseteq U_0$.
\end{lemma}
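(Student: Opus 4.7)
The plan is to combine a Springer-style reduction modulo $t$ with a henselian lifting argument over a suitable ring $\wh R_U$.  We may assume $q$ is nondegenerate, since otherwise a vector in the radical gives an isotropic vector already over $F_{U_0}$.  Let $v=v_\eta$ denote the discrete valuation on $F_{U_0}$ associated with the height-one prime $(t)\subset\wh R_{U_0}$.  Because $\wh R_{U_0}$ is a normal noetherian domain and $t$ is a prime element, the condition $v(f)\ge k$ for $f\in\wh R_{U_0}$ is equivalent to $t^k\mid f$ (using $\wh R_{U_0}=\bigcap_{\mathfrak{p}}\wh R_{U_0,\mathfrak{p}}$ over height-one primes, and that $t$ is a unit in every such localization other than $\wh R_{U_0,(t)}$).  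Consequently every $u\in F_{U_0}^\times$ with $v(u)=0$ can be written as $f/g$ with $f,g\in\wh R_{U_0}$ and $v(f)=v(g)=0$.

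Diagonalize $q\cong\<d_1,\ldots,d_n\>$ with $d_i\in F_{U_0}^\times$, and scale each coordinate $x_i$ by an appropriate power of $t$ to arrange $v(d_i)\in\{0,1\}$ for every $i$.  Setting $d_i=te_i$ whenever $v(d_i)=1$, we obtain an orthogonal decomposition $q\cong q_0\perp(t\cdot q_1)$, where the diagonal forms $q_0$ and $q_1$ have all coefficients of $v$-valuation zero.  Writing each such coefficient as $f/g$ with $v(f)=v(g)=0$, we then shrink $U_0$ to a nonempty affine open subset $U\subseteq U_0$ on which none of the finitely many resulting $\bar f,\bar g\in\mc O(U_0)$ vanish.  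The coefficients of $q_0$ and $q_1$ then become units in $\wh R_U$, so $q_0$ and $q_1$ are nondegenerate diagonal forms over $\wh R_U$, and their projective quadric schemes $V(q_0),V(q_1)$ are smooth over $\Spec\wh R_U$ (using that $\cha\ne 2$).

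Next, a Springer-type analysis turns the given $F_\eta$-isotropy of $q$ into an isotropy modulo $t$.  Rescale an $F_\eta$-isotropic vector of $q$ by a power of $t$ so that its coordinates lie in $\wh R_\eta$ and at least one coordinate is a unit; write it as $(a_0,a_1)\in\wh R_\eta^{n_0}\times\wh R_\eta^{n_1}$ matching the decomposition $q\cong q_0\perp(tq_1)$, so that $q_0(a_0)+tq_1(a_1)=0$.  If some coordinate of $a_0$ is a unit, reducing modulo $t$ gives $\bar q_0(\bar a_0)=0$ with $\bar a_0\ne 0$.  Otherwise every coordinate of $a_0$ lies in $(t)$; since each coefficient of $q_0$ is a unit, $v(q_0(a_0))\ge 2$, and comparing valuations then forces $v(q_1(a_1))\ge 1$, so $\bar q_1(\bar a_1)=0$, with $\bar a_1\ne 0$ by primitivity.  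In either case we obtain a nontrivial isotropic vector for the nondegenerate form $\bar q_i$ over $\kappa(\eta)=\Frac\mc O(U)$, and after further shrinking $U$ we may assume that this vector lies in $\mc O(U)^{n_i}$ with at least one coordinate a unit.

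Finally, $\wh R_U$ is $t$-adically complete, so the pair $(\wh R_U,(t))$ is henselian.  Since $V(q_i)\to\Spec\wh R_U$ is smooth, Hensel's lemma for henselian pairs gives $V(q_i)(\wh R_U)\twoheadrightarrow V(q_i)(\mc O(U))$, so the section produced above lifts to a $\wh R_U$-point of $V(q_i)$.  This yields a nontrivial isotropic vector for $q_i$ over $\wh R_U\subseteq F_U$, and padding with zeros in the other summand gives an isotropic vector for $q=q_0\perp(tq_1)$ over $F_U$.  The main difficulty lies in the interplay, in the first two paragraphs, between the rescalings of coordinates and the shrinkings of $U_0$, needed to ensure that both summands of the Springer decomposition simultaneously become nondegenerate integral forms over a single ring $\wh R_U$; once this is arranged, the henselian lift is essentially formal.
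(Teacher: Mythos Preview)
Your route is genuinely different from the paper's: the paper first uses a Weierstrass-type factorization $a_i = b_i c_i^2$ (with $b_i \in F$, $c_i \in F_{U_0}^\times$) to replace $q$ by an $F_{U_0}$-isometric form $q'$ defined over $F$, and then applies a general spreading-out principle for $F$-varieties (an $F_\eta$-point on the projective quadric of $q'$ extends to an $F_U$-point for small enough $U$). Your Springer-plus-Hensel argument is more self-contained, avoiding both external inputs.

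There is, however, a real gap. You assert that $t$ is a prime element of $\wh R_{U_0}$, but the scheme-theoretic closed fiber of $\wh X$ need not be reduced along $X_0$; if $e := v_\eta(t) > 1$, several steps break. Scaling coordinates by powers of $t$ moves $v(d_i)$ only in multiples of $2e$, so you cannot force $v(d_i) \in \{0,1\}$; and in your second Springer case, $v(q_0(a_0)) \ge 2$ gives only $v(q_1(a_1)) \ge 2 - e$, which is vacuous once $e \ge 2$, so $\bar q_1(\bar a_1) = 0$ does not follow. The repair is to choose $\pi \in \wh R_{U_0}$ with $v_\eta(\pi) = 1$ (take any element of the height-one prime $\mf p_\eta$ not in its symbolic square) and to use $\pi$ in place of $t$ throughout, identifying the residue ring as $\wh R_U/\sqrt{t\,\wh R_U} \cong \mc O(U)$ rather than $\wh R_U/(t)$; the henselian lift then goes through because the $t$-adic and $\mf p_\eta$-adic topologies on $\wh R_U$ coincide. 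Finally, note that smoothness of the diagonal quadric $V(q_i)$ over $\wh R_U$, which you need for the lift, requires $\cha(k) \ne 2$; this is stronger than the standing hypothesis $\cha(K) \ne 2$.
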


\begin{proof}
We may assume that $q$ is a diagonal form $\<a_1,\dots,a_n\>$, with each $a_i \in F_{U_0} \subset F_\eta$.  By \cite[Corollary~3.3(a)]{HHK:Weier}, there exist elements $b_i \in F$ and $c_i \in F_{U_0}^\times$ such that $a_i=b_i c_i^2$.  So $q$ is isometric over $F_{U_0}$ to the form $q':=\<b_1,\dots,b_n\>$ that is defined over $F$.  The projective quadric hypersurface $Q$ over $F$ that is defined by $q'$ has an $F_\eta$-point, since $q'$ is isotropic over $F_\eta$.  
Hence by \cite[Proposition~5.8]{HHK:H1}, $Q$ has an $F_U$-point for some non-empty affine open subset $U \subseteq X_0$.  
After shrinking $U$, we may assume that 
$U \subseteq U_0$.  Thus $q'_U = q_U$, and $q'$ is isotropic over $F_U$.  Hence $q$ is isotropic over $F_U$.
\end{proof}

\begin{prop} \label{quad form local-global on points}
Let $\wh X$ be a projective normal curve with closed fiber $X$ over a complete discrete valuation ring $T$ of characteristic not two.  Let 
$S$ equal $X$, or a non-empty connected open proper subset $W \subset X$, or a non-empty 
connected proper subset $V \subset X$ consisting of a union of  
irreducible components of $X$.  
In these three cases, let 
$L$ respectively equal $F$ or $F_W$ or $F_P$, where in the third case
we consider the model $\wh Y$ of $F$ obtained by contracting $V$ and where the point $P \in \wh Y$ is the image of $V$.  Let $q$ be a quadratic form over $L$.
\renewcommand{\theenumi}{\alph{enumi}}
\renewcommand{\labelenumi}{(\alph{enumi})}
\begin{enumerate}
\item \label{iso_points}
If $\dim(q) \ne 2$ and $q_{F_Q}$ is isotropic for each point $Q \in S$, then $q$ is isotropic.

\item \label{witt_index_points}
If $q$ is a regular
quadratic form, 
then $\wi(q)\in \{\min(\wi(q_{F_Q})),
\min(\wi(q_{F_Q}))-1\}$, where the minimum is taken over all
$Q \in S$.
The second case occurs precisely when $q$ is Witt equivalent to an anisotropic binary
Pfister form that becomes isotropic over each $F_Q$. 

\item \label{local-global witt ring points}
The kernel of the map $\pi:\W(L) \to \prod_{Q\in S} \W(F_Q)$
is equal to the kernel $\Sha_S(L, \mu_2)$
of the local-global map 
$H^1(L, \mu_2) \to \prod_{Q\in S} H^1(F_Q,\mu_2)$ in Galois cohomology.
\end{enumerate}
\end{prop}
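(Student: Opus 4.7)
The plan is to reduce all three parts to the patching local-global principles of Example~\ref{quad form local-global on patches} (which specializes Theorem~\ref{quad form abstract applic} to the setting of Section~\ref{patches}), by choosing a finite set $\mc P$ of closed points of $S$ appropriately and using Lemma~\ref{approx quad form} to bridge from isotropy at a generic point of $S$ to isotropy over a small open patch.

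For part~(\ref{iso_points}), I first apply Lemma~\ref{approx quad form} at each generic point $\eta$ of an irreducible component of $S$ (where $q_{F_\eta}$ is isotropic by hypothesis), obtaining a non-empty affine open subset $U'_\eta$ of that component over which $q$ becomes isotropic. I then enlarge a finite set $\mc P$ of closed points of $S$ so that: (i) $\mc P$ satisfies the hypotheses of the relevant patching proposition---Proposition~\ref{big patch ok},~\ref{local patching}, or~\ref{blowup patch} according as $S = X$, $W$, or $V$ (in the $V$-case also including the intersection of $V$ with the proper transform $\til Y$); and (ii) $\mc P$ contains the finite complement of each $U'_\eta$ in its component, so that every connected component $U$ of $S \smallsetminus \mc P$ lying in the component of $\eta$ satisfies $U \subseteq U'_\eta$. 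Since $F_{U'_\eta} \subseteq F_U$ whenever $U \subseteq U'_\eta$, $q$ becomes isotropic over each $F_U$ for $U \in \mc W$; combined with isotropy at each $F_Q$ for $Q \in \mc P$ and the hypothesis $\dim q \ne 2$, Theorem~\ref{quad form abstract applic}(\ref{abstract local-global for isotropy}) (via Example~\ref{quad form local-global on patches}) applied to the resulting patching diamond yields isotropy of $q$ over $L$.

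For part~(\ref{local-global witt ring points}), I adapt the argument of Theorem~\ref{quad form abstract applic}(\ref{local-global Witt}). Given $[q] \in \ker \pi$, Witt decomposition lets me assume $q$ is anisotropic. Setting $d = \disc(q)$ and $b = \<1,-d\>$, hyperbolicity of $q_{F_Q}$ forces $d$ to be a square in each $F_Q$, so $b_{F_Q}$ is hyperbolic; hence $q' := q \perp (-b)$ has even dimension, trivial signed discriminant, and is hyperbolic at each $F_Q$. Part~(\ref{iso_points}) applied to $q'$ shows $q'$ is isotropic over $L$, and the same argument applied to the anisotropic kernel of $q'$ (whose possible dimensions are $0$, or $\ge 4$ leading to a contradiction with part~(\ref{iso_points}), or $2$, impossible because a two-dimensional anisotropic form of trivial signed discriminant is already hyperbolic) forces that kernel to vanish. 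Thus $[q] = [b] = [\<1,-d\>]$ with $d \in \Sha_S(L,\mu_2)$, and the assignment $d \mapsto [\<1,-d\>]$ provides the asserted identification $\Sha_S(L,\mu_2) \iso \ker \pi$; the converse containment is immediate.

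Part~(\ref{witt_index_points}) then follows from~(\ref{iso_points}) and~(\ref{local-global witt ring points}) via the Witt decomposition $q = q_\mathrm{an} \perp m \cdot h$ with $m = \wi(q)$, using $\wi(q_{F_Q}) = m + \wi((q_\mathrm{an})_{F_Q})$. If $\dim q_\mathrm{an} \ne 2$, part~(\ref{iso_points}) guarantees that $q_\mathrm{an}$ is anisotropic over some $F_Q$, so $\min_Q \wi(q_{F_Q}) = m$. If $\dim q_\mathrm{an} = 2$, the same conclusion holds when $q_\mathrm{an}$ is anisotropic over some $F_Q$; otherwise $[q_\mathrm{an}]$ is a non-trivial element of $\ker \pi$, represented by an anisotropic binary Pfister form isotropic at every $F_Q$ via part~(\ref{local-global witt ring points}), and $\min_Q \wi(q_{F_Q}) = m + 1$. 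The main obstacle is the combinatorial setup in part~(\ref{iso_points}): arranging $\mc P$ to simultaneously meet the (sometimes restrictive) hypotheses of the appropriate patching proposition while also forcing every component of $S \smallsetminus \mc P$ to sit inside one of the isotropy patches $U'_\eta$ supplied by Lemma~\ref{approx quad form}.
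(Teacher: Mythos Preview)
Your argument for part~(\ref{iso_points}) is essentially the paper's: build $\mc P$ from the complements of the isotropy patches furnished by Lemma~\ref{approx quad form}, then invoke Example~\ref{quad form local-global on patches}.

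For parts~(\ref{witt_index_points}) and~(\ref{local-global witt ring points}) you take a genuinely different route. The paper proves~(\ref{witt_index_points}) first, appealing directly to Theorem~\ref{quad form abstract applic}(\ref{abstract Witt index}) at the level of a chosen $\mc P$, and then proves~(\ref{local-global witt ring points}) by showing $\ker\pi=\bigcup_{\mc P}\ker\pi_{\mc P}$ and $\Sha_S=\bigcup_{\mc P}\Sha_{\mc P}$, the latter requiring Corollary~\ref{Weier cor}(\ref{Weier}) and \cite[Proposition~5.8]{HHK:H1}; the identification $\ker\pi_{\mc P}=\Sha_{\mc P}$ then comes from Theorem~\ref{quad form abstract applic}(\ref{local-global Witt},\ref{ker of map on Witt}). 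You instead prove~(\ref{local-global witt ring points}) directly from~(\ref{iso_points}) by reproducing the discriminant-correction trick (pass to $q'=q\perp-\langle1,-d\rangle$, then peel off hyperbolic planes using~(\ref{iso_points}) until the anisotropic kernel is forced to vanish by the trivial-discriminant-in-dimension-two observation), and then deduce~(\ref{witt_index_points}) from~(\ref{iso_points}) and~(\ref{local-global witt ring points}). This is correct and in some ways cleaner: it avoids the separate verification that $\Sha_S=\bigcup_{\mc P}\Sha_{\mc P}$, and it never needs to invoke the $\SO(nh)$-torsor language or Theorem~\ref{quad form abstract applic}(\ref{abstract Witt index}) directly. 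The cost is that you are re-deriving inside the proof the content of Theorem~\ref{quad form abstract applic}(\ref{local-global Witt}) rather than citing it; the paper's route makes the dependence on the abstract patching theorem more transparent and records the intermediate equalities $\ker\pi=\bigcup\ker\pi_{\mc P}$ as a byproduct.

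One small point worth making explicit in your write-up of~(\ref{local-global witt ring points}): the injectivity of $d\mapsto[\langle1,-d\rangle]$ from $\Sha_S(L,\mu_2)$ into $\ker\pi$ follows because this map factors through the standard isomorphism $L^\times/(L^\times)^2\cong I(L)/I^2(L)$; you gesture at this with ``provides the asserted identification,'' but a sentence saying so would remove any doubt.
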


\begin{proof}
We begin with the observation that if $q_{F_Q}$ is isotropic for each point $Q \in S$, then there is a finite set $\mc P$ of closed points of $S$ such that $q_{F_U}$ is isotropic for each connected component $U$ of $S \smallsetminus \mc P$.  To see this, note that for each irreducible component $S_0$ of $S$, the form $q_{F_\eta}$ is isotropic, where $\eta$ is the generic point of $S_0$.  
Hence $q$ is isotropic over $F_U$ for some non-empty open subset 
$U \subseteq S_0$, by 
Lemma~\ref{approx quad form}.  We may now take $\mc U$ to be the collection of these sets $U$ (one for each irreducible component of $S$), and take
$\mc P$ to be the complement in $S$ of the union of the sets $U \in \mc U$.  This proves the observation.

We now prove part~(\ref{iso_points}).  By the above observation, 
there are sets $\mc P$ and $\mc U$ as above such that $q_{F_\xi}$ is isotropic for each $\xi \in \mc P \cup \mc U$.  Consider Example~\ref{quad form local-global on patches} 
in the situation of Proposition~\ref{big patch ok},~\ref{local patching}, or~\ref{blowup patch}, if $S$ is equal to $X$, $W$, or $V$ respectively.  By Theorem~\ref{quad form abstract applic}(\ref{abstract local-global for isotropy}) in the context of this example, 
$q$ is isotropic over $L$.

For part~(\ref{witt_index_points}), take the Witt decomposition $q = q_a \perp ih$, where $q_a$ is anisotropic, $h$ is a hyperbolic plane, and $i \ge 0$.  The assertion is trivial if $q$ is itself hyperbolic, and so we may assume that $q_a$ is a non-trivial form.  If $q_a$ remains anisotropic over $F_{Q_0}$ for some ${Q_0} \in S$, then $\wi(q)=i=\wi(q_{Q_0})$, and $\wi(q_Q) \ge i$ for all other $Q \in S$.  Thus $\wi(q)=\min(\wi(q_{F_Q}))$ in this case.

The other case is that $q_a$ becomes isotropic over each $F_Q$.  Then by the above claim, $q_a$ becomes isotropic over $F_\xi$ for each $\xi \in \mc P \cup \mc U$ as above.  So by 
Theorem~\ref{quad form abstract applic}(\ref{abstract Witt index}) in the context of 
Example~\ref{quad form local-global on patches}, 
$q_a$ is an anisotropic binary Pfister form that becomes isotropic (or equivalently, hyperbolic) over each $F_\xi$, and hence over each $F_Q$.  Thus $q$ is Witt equivalent to such a form and 
$\wi(q) = i = \wi(q_{F_Q})-1$ for all $Q \in S$.  

For part~(\ref{local-global witt ring points}), 
observe that by Theorem~\ref{quad form abstract applic}(\ref{local-global Witt},\ref{ker of map on Witt}) in the context of Example~\ref{quad form local-global on patches}, 
the kernel of the map $\pi_{\mc P}:\W(L) \to \prod_\xi \W(F_\xi)$ is equal to
the kernel $\Sha_{\mc P}(L, \mu_2)$ of the local-global map $H^1(L, \mu_2) \to \prod_\xi H^1(F_\xi,
\mu_2)$, where $\xi$ ranges over the elements of $\mc P \cup \mc W$ in each product. 
So it suffices  to show that 
$\ker(\pi) = \bigcup \ker(\pi_{\mc P})$ and
$\Sha_S(L, \mu_2) = \bigcup \Sha_{\mc P}(L, \mu_2)$, 
where in each case $\mc P$ ranges over the non-empty sets of closed points of $S$.  For any choice of $\mc P$ (and hence of $\mc W$), 
$F_U \subset F_Q$ for all $Q \in U \in \mc W$.  Thus $\ker(\pi_{\mc P}) \subseteq \ker(\pi)$ and $\Sha_{\mc P}(L, \mu_2)
\subseteq \Sha_S(L, \mu_2)$ for all $\mc P$.  It therefore remains to show that 
$\ker(\pi) \subseteq \bigcup \ker(\pi_{\mc P})$ and
$\Sha_S(L, \mu_2) \subseteq \bigcup \Sha_{\mc P}(L, \mu_2)$.

We begin with the first of these inclusions. By Witt decomposition, every non-trivial class in $\ker(\pi)$ is represented by a non-trivial anisotropic form $q$.  
Such a $q$ becomes hyperbolic and hence isotropic over each $F_Q$.
So by the observation at the beginning of the proof, 
$q$ becomes isotropic over $F_\xi$ for every $\xi \in \mc P \cup \mc W$, for some choice of $\mc P$.  
Also, since $q$ is not hyperbolic over $F$ but becomes hyperbolic over each $F_Q$, 
it follows from part~(\ref{witt_index_points}) above that $q$ is a binary form.  Thus the isotropic forms $q_{F_\xi}$ are also binary and hence hyperbolic, and so the class of $q$ lies in $\ker(\pi_{\mc P})$ as desired.

To prove the second inclusion, note that a non-trivial element of 
$\Sha_S(L, \mu_2)$ is given by a quadratic field extension of $L$, of the form $L[a^{1/2}]$ for some non-square $a \in L^\times$.  
By definition of $\Sha_S(L, \mu_2)$, the element $a$ is a square in $L_\eta =F_\eta$ for every generic point $\eta$ of $S$.  
For each $\eta$, choose an irreducible connected open neighborhood $U_0 \subset S$. 
By Corollary~\ref{Weier cor}(\ref{Weier}), $a=bc^2$ for some 
$b \in F^\times$ and $c \in \wh R_{U_0}^\times$.  Thus $b$ is a square in $F_\eta$; i.e.\ the $\mu_2$-torsor given by $F[b^{1/2}]$ over $F$ has an 
$F_\eta$-point.  By \cite[Proposition~5.8]{HHK:H1}, this torsor has an $F_U$-point for some open neighborhood $U \subseteq U_0$ of $\eta$; i.e.\ $b$ and hence $a$ is a square in $F_U$.  Let $\mc P$ be the complement of the union of the sets $U$ as $\eta$ varies.  Then the given element of $\Sha_S(L, \mu_2)$ lies in $\Sha_{\mc P}(L, \mu_2)$.
\end{proof}

In the above result, the first case (where $S=X$ and $L=F$) was previously shown in Theorem~9.3, Corollary~9.5, and Theorem~9.6 of \cite{HHK:H1}; but here a uniform argument proves all three cases.  
Analogous local-global assertions have been also proven with respect to discrete valuations on $F$ rather than with respect to points on $X$; see \cite[Theorem~3.1]{CPS}
and \cite[Theorem~9.11]{HHK:H1}.
By combining Proposition~\ref{quad form local-global on points} with the strategy used in \cite[Proposition~9.10]{HHK:H1},
we obtain a local-global principle for isotropy over $F_P$:

\begin{prop} \label{loc-glob isotropy at points}
Let $\wh X$ be a normal projective $T$-curve, 
let $P$ be a closed point of $\wh X$, and let $q$ be a quadratic form on $F_P$ of dimension $\ne 2$.  Assume $\cha(k) \ne 2$.  Then $q$ is isotropic over $F_P$ if and only if it is isotropic over $(F_P)_v$ for every discrete valuation $v$ on $F_P$.  

Even more is true: $q$ is isotropic over $F_P$ provided that it is isotropic over 
$(F_P)_v$ for each discrete valuation $v$ on $F_P$ whose restriction to $F$ is induced by a codimension one point on a regular projective model of $F$ over $T$.
\end{prop}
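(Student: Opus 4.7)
The only-if direction is trivial. We prove the stronger second assertion, from which the first follows. Assume $q$ is isotropic over $(F_P)_v$ for every discrete valuation $v$ on $F_P$ whose restriction to $F$ is induced by a codimension-one point of some regular projective model of $F$ over $T$.

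The plan is to apply Proposition~\ref{quad form local-global on points}(a) in its third case to a suitable regular blow-up of $\wh X$ above $P$. Writing $q = \<a_1,\dots,a_n\>$ with $a_i \in F_P^\times$, let $D$ be the divisor of zeros and poles of $a_1\cdots a_n$ on $\Spec(\wh R_P)$. By \cite{Abh} and \cite{Lip}, there is a proper birational morphism $\pi \colon \wh V \to \Spec(\wh R_P)$ (a composition of blow-ups) with $\wh V$ regular and $D' := \pi^{-1}(D)$ a normal crossing divisor; by blowing up $P$ at least once we may further assume that $V := \pi^{-1}(P)$ is a one-dimensional connected curve. By Proposition~\ref{small patch blowup}, $\pi$ is the pullback to $\Spec(\wh R_P)$ of a proper birational morphism $\omega \colon \wh X' \to \wh X$ of projective normal $T$-curves that is an isomorphism away from $P$; then $V$ appears as a union of irreducible components of the closed fiber of $\wh X'$ whose contraction is $P$, and $\wh X'$ is regular near $V$. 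Applying Proposition~\ref{quad form local-global on points}(a) with $\wh X'$ in place of $\wh X$ and $\wh X$ as the contracted model reduces the problem to showing that $q$ is isotropic over $F_Q$ for every closed point $Q \in V$.

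At each such $Q$, the ring $\wh R_Q$ is a two-dimensional complete regular local domain, and the branches $\wp$ of $V \cup D'$ at $Q$ correspond to height-one primes of $\wh R_Q$; each such $\wp$ is a codimension-one point of the regular projective model $\wh X'$. By the hypothesis, $q$ is therefore isotropic over $(F_P)_{v_\wp}$, and hence over its completion $F_\wp$, for every branch $\wp$ at $Q$. The remaining task is the local transfer from branch-isotropy to isotropy over $F_Q$: when no component of $D'$ passes through $Q$, the form has unit coefficients in $\wh R_Q$, and Hensel's lemma (applied iteratively via the successive residue fields of the two height-one primes along $V$ and along the maximal ideal) lifts isotropy of $q$ from $F_\wp$ (for the single branch $\wp$ along $V$) to $F_Q$. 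At the finitely many closed points $Q$ where $V \cup D'$ has two transversal branches, one needs a local-global principle for isotropy of forms of dimension $\ne 2$ over the fraction field of a two-dimensional complete regular local ring with normal-crossing coefficient data.

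This last step is the main obstacle. It can be handled by a purely local application of the patching-and-factorization machinery of this paper: one applies Corollary~\ref{extension blowup both cases} to $\Spec(\wh R_Q)$ via a small auxiliary blow-up, derives factorization for $\SO(nh)$ over the resulting local diamond by the method of Corollary~\ref{rational fact cor}, and then invokes Theorem~\ref{quad form abstract applic}(\ref{abstract local-global for isotropy}) in this local setting. Once the local transfer is in place at each $Q \in V$, combining the resulting isotropies via Proposition~\ref{quad form local-global on points}(a) yields isotropy of $q$ over $F_P$, completing the proof.
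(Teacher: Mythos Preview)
Your overall architecture matches the paper's: resolve to a regular model with normal-crossing coefficient support above $P$, then use Proposition~\ref{quad form local-global on points}(\ref{iso_points}) to reduce to isotropy over $F_Q$ for each $Q\in V$. The gap is in your handling of the local step at a closed point~$Q$, which you yourself flag as ``the main obstacle.''

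The paper does not use further blowups here. It simply picks a regular system of parameters $\{x,y\}$ at $Q$ whose support contains that of $q$, takes the single $y$-adic valuation $v$ on $F_Q$, restricts it to $F_P$ and to $F$, verifies via Weierstrass Preparation (\cite[Theorem~3.1(c)]{HHK:Weier}) that this restriction is induced by a codimension-one point of the regular model $\wh X'$, applies the hypothesis to get isotropy over $(F_Q)_v$, and then invokes \cite[Lemma~9.9]{HHK:H1} (a direct Springer-type computation for forms with normal-crossing support over a two-dimensional complete regular local ring) to conclude isotropy over $F_Q$. That lemma is the missing ingredient you need.

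Your proposed workaround---apply Corollary~\ref{extension blowup both cases} to $\Spec(\wh R_Q)$, derive factorization for $\SO(nh)$, and invoke Theorem~\ref{quad form abstract applic}(\ref{abstract local-global for isotropy})---does not close the argument. That theorem would require you to know that $q$ is isotropic over each patch field $F_{Q'}$ and $F_{U'}$ on the exceptional divisor of the auxiliary blowup; but your only input is isotropy over certain completions $(F_P)_v$, and the fields $F_{U'}$ are not such completions. At each new closed point $Q'$ you are back in exactly the same two-branch situation as at $Q$ (blowing up a node produces two new nodes on the exceptional line), so the process does not terminate. There is also a secondary gap: your assertion that every branch of $V\cup D'$ at $Q$ ``is a codimension-one point of the regular projective model $\wh X'$'' is unjustified for the $D'$-branches, since $D$ is a divisor on $\Spec(\wh R_P)$, not on $\wh X$; the paper handles precisely this point with the Weierstrass argument. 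Finally, Proposition~\ref{quad form local-global on points}(\ref{iso_points}) requires isotropy at \emph{all} points of $V$, including generic ones; this case is easy (then $F_Q$ is itself complete discretely valued), but you should say so.
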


\begin{proof}
The forward direction of the first assertion is trivial.  For the reverse direction, consider a quadratic form $q$ on $F_P$ that is isotropic on $(F_P)_v$ for every discrete valuation $v$ on $F_P$.  We may assume that $q$ is a diagonal form $\<a_1,\dots,a_n\>$, with $a_i \in \wh R_P$.

By resolution of singularities for surfaces (\cite{Abh}, \cite{Lip}) and Weierstrass Preparation (\cite{HHK:Weier}, Corollary~3.7), there is a birational projective morphism $\pi:\wh X' \to \wh X$ such that $\wh X'$ is regular, and such that 
on the pullback $\pi':\wh X'_P \to \Spec(\wh R_P)$ of $\pi$ with respect to $\Spec(\wh R_P) \to \wh X$, 
the support of $q$ is a normal crossing divisor at every point of 
$\pi'^{-1}(P) \subset \wh X'_P$ (which we may identify with 
$V := \pi^{-1}(P) \subset \wh X'$; here the support of $q$ is defined to be the union of the supports of the divisors of the elements $a_i$.)  By the third case of Proposition~\ref{quad form local-global on points}(\ref{iso_points}), in order to show that $q$ is isotropic over $F_P$ it suffices to show that $q$ is isotropic over $F_Q$ for every point $Q \in V$.

First note that by \cite[Proposition~7.5]{HHK:H1}, for any $Q \in V$ and any discrete valuation $v$ on $F_Q$, the restriction of $v$ to $F$ is a (non-trivial) discrete valuation.  Since $F \subseteq F_P \subseteq F_Q$, it follows that the same holds for the restriction of $v$ to $F_P$.

Consider a closed point $Q$ of $V$.  By the condition on the support of $q$ at $Q$, there exists a generating set $\{x,y\}$ for the maximal ideal of $\wh R_Q$ whose support contains that of $q$ in $\Spec(\wh R_Q)$.  Let $v$ be the $y$-adic valuation on $F_Q$, and let $v_0 = v|_{F_P}$.  
Thus $q$ is isotropic over the completion $(F_P)_{v_0}$,
by the previous paragraph and by the hypothesis of this direction of the proposition.  
Hence $q$ is also isotropic over $(F_Q)_v$, which contains $(F_P)_{v_0}$.  Since $(F_Q)_v$ has residue characteristic unequal to two, it follows from
\cite[Lemma~9.9]{HHK:H1} that $q$ is isotopic over $F_Q$. 

The other case is that $Q$ is a generic point of $V$.  Thus $F_Q$ is a complete discretely valued field, say with valuation $v$.   Again, the restriction $v_0$ of $v$ to $F_P$ is a discrete valuation such that $q$ is isotropic over $(F_P)_{v_0}$.  Hence $q$ is also isotropic over $F_Q$, which contains $(F_P)_{v_0}$.  This completes the proof of the reverse implication.

For the last assertion, note that we may assume in the above argument that the model $\wh X'$ has the property that distinct branches of the closed fiber $X'$ at any closed point must lie on 
distinct irreducible components of $X'$.  With respect to this choice of model $\wh X'$, it suffices to check that the valuations used in the above argument are induced by codimension one points on $\wh X'$.

If $Q$ is a generic point of $V$, then this condition is trivial, since $Q$ is itself a codimension one point on $\wh X'$, and the restriction of $v_0$ to $F$ is the valuation associated to that point on that model.  So consider the case that $Q$ is a closed point of $V$, and take the valuation $v_0 = v|_{F_P}$ considered in the argument above.
By the above condition on branches, the hypothesis of \cite[Theorem~3.1(c)]{HHK:Weier} is automatically satisfied; and so there exist $b \in F$ and $c \in \wh R_Q^\times$ such that $y=bc$. 
Here $b  = yc^{-1} \in \wh R_Q$, and $\{x,b\}$ is a generating set for the maximal ideal of $\wh R_Q$.
So there is a unique irreducible component $D$ of the zero locus of $b$ on $\wh X'$ that passes through $Q$.  The $y$-adic valuation $v$ on $F_Q$ is equal to the $b$-adic valuation on $F_Q$, and the valuation $v_0$ on $F_P$ thus restricts to the $b$-adic valuation on $F$.  That is, $v_0|_F$ is the discrete valuation associated to the generic point of $D$, which is of codimension one on $\wh X'$, as desired.
\end{proof}

\begin{lemma} \label{Cohen lemma}
Let $E$ be the fraction field of a two-dimensional Noetherian 
complete local domain $R$.  Then $E$ is isomorphic to a finite separable extension of a field of the form $F_P$.  Moreover if $R$ is regular or equicharacteristic, then $E$ is itself of the form $F_P$.
\end{lemma}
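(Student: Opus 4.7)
The plan is to invoke Cohen's structure theorem for complete Noetherian local rings to write $R$ as a module-finite extension of a power series subring $S$, and then to realize $S$ geometrically as $\wh R_{P_0}$ at a rational point $P_0$ on the projective line over a complete discrete valuation ring.  By Cohen's theorem, such a subring $S \subseteq R$ exists with $S \cong k[[x,y]]$ in the equicharacteristic case (where $k$ is the residue field of $R$ and $x, y \in R$ form a system of parameters) and $S \cong V[[x]]$ in the mixed characteristic case (where $V \subseteq R$ is a Cohen ring for $k$ and $x \in R$).  Moreover, if $R$ is regular, then $R \cong S$ itself.

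Set $T := k[[y]]$ in the equicharacteristic case and $T := V$ in the mixed characteristic case; in each case take $\wh X := \mbb P^1_T$ with affine coordinate $x$, and let $P_0$ be the point $(x=0)$ on the closed fiber.  A direct check gives $\wh R_{P_0} = S$ and hence $F_{P_0} = \Frac S$; and since $R$ is module-finite over $S$, the field $E = \Frac R$ is a finite extension of $F_{P_0}$.  This extension is automatically separable in mixed characteristic, because $\Frac V$ has characteristic zero; in equicharacteristic it can be made separable by choosing the Cohen parameters $x, y$ in sufficiently general position (a standard separable Noether-normalization argument).  Either way, $E$ is a finite separable extension of $F_{P_0}$, which is of the form $F_P$; this proves the first assertion.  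In the regular case, $R \cong S$ yields $E = F_{P_0}$ directly, giving the second assertion for $R$ regular.

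For the equicharacteristic ``moreover'' clause, after passing to the integral closure of $R$ in $E$ (which leaves $E$ unchanged and keeps $R$ a two-dimensional complete Noetherian local domain), I would globalize $E$ by a Krasner-style descent.  Let $\alpha \in R$ be a primitive element for $E/F_{P_0}$, with monic minimal polynomial $g(Z) \in S[Z]$, and choose a polynomial $\til g(Z) \in F[Z]$ that approximates $g$ coefficient-wise in the $\mf m_{P_0}$-adic topology on $S$, where $F = K(x)$ is the function field of $\mbb P^1_T$.  Since $\wh R_{P_0}$ is Henselian, a sufficiently close $\til g$ yields an isomorphism $F_{P_0}[Z]/(\til g) \cong F_{P_0}[Z]/(g) = E$ of $F_{P_0}$-algebras.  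After replacing $\til g$ by the appropriate irreducible factor over $F$, the field $F' := F[Z]/(\til g)$ is a finite separable extension of $F$, and the decomposition of $F' \otimes_F F_{P_0}$ into completions $\prod_i F'_{P'_i}$ at the points $P'_i$ of the normalization $\wh X'$ of $\mbb P^1_T$ in $F'$ lying above $P_0$ contains $E$ as one of the factors.  The corresponding point $P' \in \wh X'$ then satisfies $E \cong F_{P'}$, so $E$ is itself of the form $F_P$.

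The main technical obstacle is the Krasner-style descent in the equicharacteristic case.  Classical Krasner's lemma is stated for complete discretely valued fields, whereas $F_{P_0} = \Frac k[[x,y]]$ is a two-dimensional complete local field.  A higher-dimensional analog is available because $\wh R_{P_0}$ is Henselian and one may lift factorizations of polynomials close to $g$ via Hensel's lemma, but verifying that sufficiently close polynomial approximations truly produce isomorphic étale $F_{P_0}$-algebras requires care.  A secondary, minor issue is the arrangement of separable Noether normalization in positive equicharacteristic, which is handled by a general-position argument on the parameters.
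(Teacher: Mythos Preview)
Your overall strategy matches the paper's: invoke Cohen's structure theorem to realize $R$ as finite over a regular subring $S \cong T[[x]]$, identify $S$ with $\wh R_{P_0}$ for $P_0$ a point on $\mbb P^1_T$, and handle the regular case directly.  The paper likewise cites Cohen for the structure, and for separability in positive equicharacteristic it invokes a specific theorem of Gabber--Orgogozo rather than a generic ``separable Noether normalization''; for complete local rings this result is less routine than for finitely generated $k$-algebras, so a precise reference is needed there.

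The substantive gap is in your Krasner-style descent for the equicharacteristic ``moreover'' clause.  Your claim that \emph{any} sufficiently $\mf m_{P_0}$-adically close $\til g$ yields $F_{P_0}[Z]/(\til g)\cong F_{P_0}[Z]/(g)$ is false in this two-dimensional setting.  Take $S=k[[x,y]]$ with $\cha k=0$, $f=x\,e^{xy}\in S$, and $g(Z)=Z^2-f$; then $\til f := x\,P_N(xy)+y^M$ (with $P_N$ the degree-$N$ Taylor polynomial of $\exp$) lies in $k[x,y]\subset R_{P_0}$ and satisfies $\til f\equiv f\pmod{\mf m_S^{\min(2N+3,M)}}$, yet $L(\sqrt f)\not\cong L(\sqrt{\til f})$: the $x$-adic valuation on $L$ gives $v_x(f)=1$ and $v_x(\til f)=0$, so $v_x(f\til f)=1$ is odd and $f\til f$ is not a square in $L$.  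The problem is that $\mf m_S$-adic closeness does not control behavior along individual height-one primes of $S$, which is exactly what governs the isomorphism class of a ramified extension.  Some well-chosen approximations \emph{do} work (e.g.\ $\til f=x\,P_N(xy)$ above), but an argument is needed to produce one; ``sufficiently close'' alone is not enough.  The paper sidesteps this by citing \cite[Lemma~3.8]{HHK:Weier}, which supplies the required descent in the equicharacteristic case by a different argument; you would need either to reproduce that, or to replace the naive approximation step with a construction that controls the ramification.
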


\begin{proof}
If $R$ is regular, then by \cite[Theorem~15]{Cohen}
it is of the form $T[[x]]$ for some complete discrete valuation ring $T$.  Thus
$E=F_P$ with respect to a point on the projective $T$-line.  

In the general case, by \cite[Theorem~16]{Cohen}, $R$ is a finite extension of a two-dimensional regular complete local domain 
having residue field $k$.  So by the previous paragraph,   
$E$ is a finite extension of a field of the form $F_P$.  This extension is automatically 
separable if $\cha(E)=0$; and by \cite[Th\'eor\`eme~7.1]{GabOrg}, 
it can be chosen to be separable if $\cha(E)>0$.  Hence $E$ is 
isomorphic to a finite separable extension of $F_P$.

Finally, if $R$ is equicharacteristic, then by the above it
is a finite generically separable 
extension of some $T[[x]]$,
where $T=k[[t]]$ since it is equicharacteristic.  So $E$ is
a finite separable extension of $k((t,x))$, and thus of the form $F_P$ by 
\cite{HHK:Weier}, Lemma~3.8.
\end{proof}  

Proposition~\ref{loc-glob isotropy at points} and Lemma~\ref{Cohen lemma} then yield the following strengthening of \cite[Theorem~1.2]{HuLGP}):

\begin{cor} \label{LGP isotropy local cor}
Let $E$ be the fraction field of a regular or equicharacteristic two-dimensional complete local ring whose residue field $k$ has characteristic unequal to two.  Then a quadratic form $q$ over $E$ of dimension $\ne 2$ is isotropic if and only if it becomes isotropic over $E_v$ for every discrete valuation $v$ on $E$. 
\end{cor}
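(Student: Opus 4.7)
The plan is that this corollary is essentially a direct consequence of the two preceding results: Lemma~\ref{Cohen lemma} realizes $E$ as a field of the form $F_P$ arising from a closed point on a projective normal $T$-curve, and Proposition~\ref{loc-glob isotropy at points} then gives the local-global principle for isotropy over such a field.

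More precisely, the forward direction is immediate, since if $q$ has a nontrivial zero over $E$ then it has one over every overfield $E_v$. For the reverse direction, I would first invoke Lemma~\ref{Cohen lemma}: because $R$ is either regular or equicharacteristic, the ``moreover'' clause applies and gives an isomorphism $E \cong F_P$ for some closed point $P$ on the closed fiber of a projective normal $T$-curve $\wh X$, where $T$ is a complete discrete valuation ring with residue field $k$ (equal to the residue field of $R$). The hypothesis $\cha(k) \ne 2$ in the statement of the corollary then matches the residue-characteristic hypothesis required by Proposition~\ref{loc-glob isotropy at points}.

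Next, under the identification $E = F_P$, the hypothesis that $q_{E_v}$ is isotropic for every discrete valuation $v$ on $E$ implies in particular that $q_{(F_P)_v}$ is isotropic for every discrete valuation $v$ on $F_P$. Applying Proposition~\ref{loc-glob isotropy at points} to the form $q$ over $F_P$ (which requires $\dim q \ne 2$, exactly as in the hypothesis of the corollary) then yields that $q$ is isotropic over $F_P = E$, as desired.

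There is no substantive obstacle beyond verifying that the hypotheses line up; the heavy lifting has already been done in Proposition~\ref{loc-glob isotropy at points} (which in turn rests on the patching setup, resolution of singularities, Weierstrass preparation, and the local-global principle at points established in Proposition~\ref{quad form local-global on points}). The only point that deserves a line of comment in the write-up is why we obtain $E \cong F_P$ on the nose rather than just as a finite separable extension of some $F_P$, and this is precisely what the regular-or-equicharacteristic hypothesis buys us via the stronger conclusion of Lemma~\ref{Cohen lemma}.
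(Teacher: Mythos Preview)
Your proposal is correct and matches the paper's own argument exactly: the paper simply states that the corollary follows from Proposition~\ref{loc-glob isotropy at points} together with Lemma~\ref{Cohen lemma}, with no further proof given. Your write-up spells out precisely this, including the reason the regular-or-equicharacteristic hypothesis is needed (to invoke the ``moreover'' clause of Lemma~\ref{Cohen lemma} and get $E \cong F_P$ rather than merely a finite separable extension).
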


\begin{cor} \label{uinv max}
In the situation 
of Proposition~\ref{big patch ok} (resp.~\ref{local patching} or~\ref{blowup patch}), 
with $\cha(K)\ne 2$, 
let $L$ be the field $F$ (resp.~$F_W$ or $F_P$) 
and let $S$ be the set $X$ (resp.~$W$ or $V$). 
\renewcommand{\theenumi}{\alph{enumi}}
\renewcommand{\labelenumi}{(\alph{enumi})}
\begin{enumerate}
\item \label{uinv max ineq}
Then 
$\displaystyle u(L) \le \max_{\xi \in \mc P \cup \mc W} u(F_\xi)
\le \sup_{Q \in S} u(F_Q)$.
\item \label{uinv max equalities}
If the residue field $k$ of $T$ has characteristic unequal to two, then
$\displaystyle u(L) = \max_{\xi \in \mc P \cup \mc W} u(F_\xi)
= \sup_{Q \in S} u(F_Q) = \sup_{v \in \Omega} u(L_v)$, where $\Omega$ is the set of discrete valuations on $L$ whose restriction to $F$ is a discrete valuation that is induced by a codimension one point on a regular model of $F$.
\item \label{uinv max 0,2}
If $k$ is perfect of characteristic two, and $\cha(K) = 0$, then the four quantities
$u(L)$, $\max_{\xi \in \mc P \cup \mc W} u(F_\xi)$,
$\sup_{Q \in S} u(F_Q)$, $\sup_{v \in \Omega} u(L_v)$
are each less than or equal to $8$.
\end{enumerate}
\end{cor}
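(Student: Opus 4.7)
The plan is to handle the three parts in sequence, each reducing to previously established local-global machinery.

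For part (a), the first inequality $u(L) \le \max_{\xi \in \mc P \cup \mc W} u(F_\xi)$ follows by combining Example~\ref{quad form local-global on patches} with Theorem~\ref{quad form abstract applic}(\ref{abstract local-global for isotropy}): a quadratic form $q$ over $L$ of dimension strictly greater than $\max_\xi u(F_\xi)$ is isotropic over every factor $F_\xi$, and provided $\dim q \ne 2$, the local-global principle for isotropy applied to the diamond $F_\bullet$, $F_{W\bullet}$, or $F_{P\bullet}$ gives isotropy over $L$ (the $\dim q = 2$ case is vacuous unless $\max_\xi u(F_\xi) \le 1$, a degenerate situation). For the second inequality, the contribution from $\xi = Q \in \mc P$ is immediate since $\mc P \subseteq S$. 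For $\xi = U \in \mc W$, I would run the same local-global argument one level down, applying Proposition~\ref{quad form local-global on points} with $L = F_U$ and $S = U$ whenever $U$ is a connected open subset of $X$ (automatic when $L = F$ or $L = F_W$); in the remaining sub-case, where $L = F_P$ and $U$ is a connected open subset of the closed exceptional set $V$ but not of $X$, I would first refine by adjoining closed points of $U$ to $\mc P$ and then invoke the refinement principle Proposition~\ref{subfactor}, together with the patching results of Proposition~\ref{blowup patch} and Corollary~\ref{extension blowup both cases}, to reduce to the applicable setting.

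For part (b), the equalities $u(L) = \max_\xi u(F_\xi) = \sup_{Q \in S} u(F_Q)$ follow by combining the chain in (a) with the reverse bound $\sup_Q u(F_Q) \le u(L)$, which is the contrapositive of Proposition~\ref{quad form local-global on points}(\ref{iso_points}): any anisotropic quadratic form over $L$ of dimension $\ne 2$ must remain anisotropic over some $F_Q$, so $u(F_Q) \ge \dim q$. For the equality with $\sup_{v \in \Omega} u(L_v)$, the inequality $\sup_v u(L_v) \le u(L)$ is obtained by density: any anisotropic form over $L_v$ can be approximated by one with coefficients in $L$, which remains anisotropic over $L_v$ (the square-class of each coefficient being locally constant in the $v$-adic topology when $\cha(k) \ne 2$) and hence over $L$ via the inclusion $L \hookrightarrow L_v$. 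The reverse inequality $u(L) \le \sup_v u(L_v)$ is Proposition~\ref{loc-glob isotropy at points} in the case $L = F_P$, and follows from the analogous local-global principles for isotropy over $F$ or $F_W$ with respect to discrete valuations in $\Omega$ established in \cite[Theorem~9.11]{HHK:H1} and \cite{CPS}.

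For part (c), in the mixed characteristic setting $\cha(K) = 0$, $\cha(k) = 2$ with $k$ perfect, I would invoke the theorem of Parimala--Suresh in \cite{PS:PIu}, which gives $u \le 8$ for complete discretely valued fields with residue field of characteristic $2$ satisfying the appropriate $2$-rank hypothesis, as well as for fraction fields of suitable $2$-dimensional complete local domains. Applied to each field $F_Q$ (a fraction field of a $2$-dimensional complete domain with residue field finite over $k$) and each completion $L_v$ (a complete discretely valued field with residue field finitely generated over $k$), this gives $\sup_Q u(F_Q) \le 8$ and $\sup_v u(L_v) \le 8$. Combining with the inequality chain from (a) yields $u(L) \le \max_\xi u(F_\xi) \le \sup_Q u(F_Q) \le 8$, establishing all four bounds simultaneously.

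The main obstacle is the second inequality in (a) for $L = F_P$ and $U \in \mc W$ a subset of the closed set $V$ that is not open in $X$, since the existing local-global principles over patching fields are stated for open subsets of the closed fiber. The resolution through refinement and blow-up should carry through, but requires careful bookkeeping to ensure the diamonds produced are of the form handled by Propositions~\ref{blowup patch} and~\ref{quad form local-global on points}. A secondary issue is verifying in (c) that each field $F_Q$ and $L_v$ occurring satisfies the precise hypotheses of \cite{PS:PIu}, which likely requires a brief case analysis distinguishing closed from generic points $Q$ and various residue characteristics for $v$.
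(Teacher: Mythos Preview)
Your argument for part~(b) contains a genuine gap. You claim that the reverse inequality $\sup_{Q \in S} u(F_Q) \le u(L)$ follows from the contrapositive of Proposition~\ref{quad form local-global on points}(\ref{iso_points}), but the direction is backwards. The contrapositive says: if $q$ is anisotropic over $L$ (with $\dim q \ne 2$), then $q_{F_Q}$ is anisotropic for some $Q$. This yields $\sup_Q u(F_Q) \ge u(L)$, which is already contained in part~(a), not the reverse bound you need. To prove $u(F_Q) \le u(L)$ you must start with an anisotropic form over $F_Q$ and bound its dimension by $u(L)$; but such a form has coefficients in $F_Q$, not in $L$, and there is no a~priori reason it descends. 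This descent is precisely the nontrivial content of part~(b). The paper accomplishes it by a detour through the valuations: one shows $\sup_Q u(F_Q) \le \sup_{v \in \Omega} u(L_v)$ by first passing to a regular model and a split cover so that every element of $F_Q$ factors as $bc^2$ with $b \in F$ (Weierstrass preparation, \cite[Corollaries~3.3, 3.7]{HHK:Weier}), thereby replacing any diagonal form over $F_Q$ by an $F_Q$-isometric one defined over $F$; then Proposition~\ref{loc-glob isotropy at points} applies. The inequality $\sup_v u(L_v) \le u(L)$ is then obtained via Springer's theorem and \cite[Lemma~4.9]{HHK}. Your density argument for $u(L_v) \le u(L)$ is fine, but it does not help with the $F_Q$ bound.

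Two smaller points. Your worry in part~(a) about $U \in \mc W$ not being open in $X$ in the blowup case is unfounded: since $\mc P$ is required to contain $V \cap \til Y$, the set $V \smallsetminus \mc P$ is open in $X$, so Proposition~\ref{quad form local-global on points} applies directly with $W = U$. No refinement machinery is needed there. In part~(c), your outline is broadly correct, but the paper does not invoke \cite{PS:PIu} directly for arbitrary $F_Q$; rather it first blows up so that the support of $q$ together with the divisor of $2$ has normal crossings, applies \cite[Proposition~4.6]{PS:PIu} at each point of the exceptional fiber, and then uses Proposition~\ref{quad form local-global on points}(\ref{iso_points}) to conclude for $F_Q$.
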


\begin{proof}
For $\xi \in \mc P \cup \mc W$, the field $F_\xi$ is not quadratically closed, since the integrally closed ring $\wh R_\xi$ is not.  Thus $u(F_\xi) \ge 2$
by \cite{Lam}, Chapter~XI, Example~6.2(1).

If $q$ is a quadratic form over $L$ having dimension greater than 
$\max_{\xi \in \mc P \cup \mc W} u(F_\xi)$, then $q_\xi$ is isotropic for all $\xi \in \mc P \cup \mc W$.  Hence $q$ is isotropic over $L$ by Theorem~\ref{quad form abstract applic}(\ref{abstract local-global for isotropy}) in the context of Example~\ref{quad form local-global on patches}, using that $\dim(q) > 2$.  
This shows that
$u(L) \le \max_{\xi \in \mc P \cup \mc W} u(F_\xi)$.  

Next, if $U \in \mc W$ and $q$ is a quadratic form over $F_U$ of dimension 
greater than $\sup_{Q \in S} u(F_Q)$, then $q_{F_Q}$ is isotropic for every point $Q$ of $U$.  Hence $q$ is isotropic over $F_U$  by Proposition~\ref{quad form local-global on points}(\ref{iso_points}) for $F_U$, using that $\dim(q)>2$.  Thus 
$\max_{\xi \in \mc P \cup \mc W} u(F_\xi)
\le \sup_{Q \in S} u(F_Q)$.  This proves part~(\ref{uinv max ineq}).

Next, we show part~(\ref{uinv max equalities}), assuming that $\cha(k) \ne 2$. 
By part~(\ref{uinv max ineq}), it suffices to show the two inequalities 
$\sup_{Q \in S} u(F_Q) \le \sup_{v \in \Omega} u(L_v) \le u(L)$.

For the first of these inequalities, we may consider just closed points $Q$.  Let $\pi:\wh Y \to \wh X$ be a birational projective morphism such that $\wh Y$ is smooth, and let $\Sigma = \pi^{-1}(S)$.  Applying 
Proposition~\ref{quad form local-global on points}(\ref{iso_points}) to $F_Q$ for every $Q \in S$ at which $\pi$ is not an isomorphism, we see that 
$\sup_{Q \in S} u(F_Q) \le \sup_{Q \in \Sigma} u(F_Q)$.  So after replacing $\wh X$ by $\wh Y$, we may assume that $\wh X$ is regular.  
Next, since $\cha(k)\ne 2$, there is a split cover $\omega:\wh X' \to \wh X$, say with function field extension $F'/F$ and closed fiber $X'$, such that for each $Q' \in S' := \omega^{-1}(S)$ and each $a \in F'_{Q'}$, there exist $b \in F'$ and $c \in F_{Q'}'^\times$ such that $a=bc^2$. (See 
Corollaries~3.3(c) and~3.7 of \cite{HHK:Weier}, the latter applied to the set of non-unibranched points of $S$.) 
By Proposition~5.1 of~\cite{HHK:H1}, the set of isomorphism classes of fields $F'_{Q'}$, for $Q' \in X'$, is the same as the set of isomorphism classes of fields $F_Q$, for $Q \in X$.  
Also, since $\wh X$ is regular, the analogous assertion is true for the fields $F_v$, by Proposition~7.6 of~\cite{HHK:H1}.  
So we may replace $\wh X$ by $\wh X'$, and therefore assume that $\wh X$ satisfies the above factorization condition on elements $a \in F_Q$.  

Now let $q$ be a quadratic form over $F_Q$ for some $Q \in S$, and
assume that $n:=\dim(q) > u(L_v)$ for all $v \in \Omega$.  To prove the first inequality we wish to show that $q$ is isotropic over $F_Q$.  We may assume that $q = \<a_1,\dots,a_n\>$ with $a_i \in F_Q$.  By the above condition, we may write $a_i=b_ic_i^2$ with $b_i \in F$ and $c \in F_Q^\times$.  Replacing $q$ by the $F_Q$-equivalent form $\<b_1,\dots,b_n\>$, we may assume that $q$ is defined over $F$.  
Now let $w$ be any discrete valuation on $F_Q$
whose restriction to $F$ is a discrete valuation induced by 
a codimension one point on a regular projective model of $F$.
Thus $v:=w|_L \in \Omega$, and
$L_v$ is contained in $(F_Q)_w$.
But $q$ is isotropic over $L_v$ by the dimension assumption on $q$, since 
$v\in \Omega$ and 
$q$ is a quadratic form over $L_v$.  Thus $q$ is isotropic over each $(F_Q)_w$.  By 
Proposition~\ref{loc-glob isotropy at points}, it follows that $q$ is isotropic over $F_Q$.  This completes the proof of the first inequality.

For the second inequality, let $v$ be a discrete valuation on $L$.
Since $\cha(k) \ne 2$, the residue field $\kappa_v$ of $L_v$ also has characteristic unequal to two.
So $u(L_v) = 2u(\kappa_v)$ for each $v \in \Omega$ by Springer's theorem \cite[Proposition~2]{Spr}, 
and $2u(\kappa_v) \le u(L)$ by the first part of \cite[Lemma~4.9]{HHK}.  Hence 
$u(L_v) \le u(L)$, concluding the proof of 
part~(\ref{uinv max equalities}).

For part~(\ref{uinv max 0,2}), we first show under the given hypotheses that $u(F_Q) \le 8$ for all $Q \in S$.  Let $q$ be a quadratic form of dimension $9$ over $F_Q$; we wish to show that $q$ is isotropic.  
Since $\cha(L)=\cha(K)=0$, we may assume that $q$ is a diagonal form $\<a_1,\dots,a_9\>$, with $a_i \in \wh R_P$.  We may assume that each $a_i$ is non-zero.  Let $D$ be the union of the supports of the divisors $(a_1),\dots,(a_9),(2)$ on $\Spec(\wh R_Q)$.  
In the special case that $\wh X$ is regular at $Q$ and $D$ has at most a normal crossing at $Q$, \cite[Proposition~4.6]{PS:PIu} asserts that $q$ is isotropic.  
More generally, let $\pi:\wh X' \to \wh X$ be a blow-up such that $\wh X'$ is regular and $D$ has only normal crossings on 
$\Spec(\wh R_Q) \times_{\wh X} \wh X'$, the corresponding blow-up of $\Spec(\wh R_Q)$. 
Then $q$ is isotropic over $F_{Q'}$ for every $Q' \in \pi^{-1}(Q)$, by the special case just shown.  By Proposition~\ref{quad form local-global on points}(\ref{iso_points}), $q$ is isotropic over $F_Q$, as desired.  Thus $u(F_Q) \le 8$.

So by part~(\ref{uinv max ineq}), $\displaystyle u(L) \le \max_{\xi \in \mc P \cup \mc W} u(F_\xi)
\le \sup_{Q \in S} u(F_Q) \le 8$ in this case. 
To complete the proof of part~(\ref{uinv max 0,2}), it suffices to show that $u(L_v) \le 8$ for each $v \in \Omega$.  
If the residue characteristic of $v$ is zero (and thus not two), then $u(L_v) \le u(L)$ by the same argument as at the end of the proof of part~(\ref{uinv max equalities}); and so $u(L_v) \le 8$.  
Otherwise, $v$ is the valuation associated to the generic point of a component of the special fiber of some model of $L$, 
and $u(L_v) \le 8$ as in the first part of the proof of \cite[Theorem~4.7]{PS:PIu}.
\end{proof}

\begin{remark} \label{uinv max rk}
\begin{shortenum}
\renewcommand{\theenumi}{\alph{enumi}}
\renewcommand{\labelenumi}{(\alph{enumi})}
\item \label{dvr lgp quad forms}
Corollary~\ref{uinv max}(\ref{uinv max equalities}) remains valid if one takes the supremum over
{\em all} the discrete valuations on $L$, since the above argument that 
$u(L_v) \le u(L)$ does not use that $v \in \Omega$.  
\item \label{Hu rk}
Corollary~\ref{uinv max}(\ref{uinv max equalities}) is related to 
Theorem~4.9 in \cite{HuLaurent}.  The proof in \cite{HuLaurent} used that the function field there was assumed to be purely transcendental.  Note that $\cha(k) \ne 2$ in that assertion, by a standing hypothesis there.
\item \label{PS dvr rk}
Part~(\ref{uinv max 0,2}) of the corollary extends Theorem~4.7 of \cite{PS:PIu}, which asserted that $u(F) \le 8$ under these hypotheses.
\end{shortenum}
\end{remark}

The next result generalizes an assertion given in \cite[Theorem~4.1]{HHK:Weier} concerning the value of $u(F_U)$ for an open subset $U$ of $X$.  
Here, as in \cite{HHK} and \cite{HHK:Weier}, given a field $E$, $u_s(E)$ denotes the smallest $n$ such that $u(L)\le 2^i n$ for every finitely generated field extension $L/E$ of transcendence degree $i\le 1$.
The proof is as for \cite[Theorem~4.1]{HHK:Weier},
but using Corollary~\ref{Weier cor} in place of the less general \cite[Corollary~3.3(a)]{HHK:Weier}.  As in that result, we need to assume that the residue field $k$ of the complete discrete valuation ring $T$ has characteristic unequal to two (not just that its fraction field $K$ has characteristic $\ne 2$).  

\begin{prop}\label{u-inv of FW}
Let $\wh X$ be a normal projective $T$-curve, and let
$U$ be a non-empty connected open subset of the closed fiber $X$.
Assume that $\cha(k)\ne 2$.
\renewcommand{\theenumi}{\alph{enumi}}
\renewcommand{\labelenumi}{(\alph{enumi})}
\begin{enumerate}
\item \label{FW upper bound for u}
Then $u(F_U) \le 4u_s(k)$.
\item \label{FW lower bound for u}
Let $\til X$ be the normalization of $X$, and let $\til Q \in \til X$ be a closed point lying over some point $Q \in U$.  Then $u(F_U) \ge 4u(\kappa(\til Q))$. 
\end{enumerate}
\end{prop}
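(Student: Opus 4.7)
The plan is to adapt the proof of \cite[Theorem~4.1]{HHK:Weier} by replacing the Weierstrass-type statement \cite[Corollary~3.3(a)]{HHK:Weier}, which required $U$ to lie in a single irreducible component of $X$, with the more general Corollary~\ref{Weier cor}(b) of the present paper; this substitution is precisely what accommodates the case where $U$ is connected but meets several irreducible components.

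For part~(\ref{FW upper bound for u}), I take a diagonal form $q = \<a_1,\dots,a_n\>$ over $F_U$ of dimension $n > 4u_s(k)$ and use Corollary~\ref{Weier cor}(b) with exponent $2$ (prime to $\cha(k)$, since $\cha(k) \neq 2$) to write each $a_i = b_i c_i^2$ with $b_i \in F^\times$ and $c_i \in \wh R_U^\times$. Thus $q$ is $F_U$-isometric to $q' := \<b_1,\dots,b_n\>$, which is already defined over $F$. Since $F$ is a one-variable function field over the complete discretely valued field $K$, the established bound $u(F) \le 4u_s(k)$ (see \cite{HHK}) implies that $q'$, and hence $q$, is isotropic.

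For part~(\ref{FW lower bound for u}), I exhibit an explicit anisotropic form over $F_U$ of dimension $4u(\kappa(\til Q))$. I pick an anisotropic form $\phi_0$ of dimension $u(\kappa(\til Q))$ over $\kappa(\til Q)$, lift it to a form over $\wh R_U$ with unit coefficients, and choose $y \in \wh R_U$ whose reduction modulo $t$ is a local parameter at $\til Q$ on the irreducible component of the reduced closed fiber of $\Spec(\wh R_U)$ containing $\til Q$. The candidate form is
\[\phi := \<1,-t\> \otimes \<1,-y\> \otimes \phi_0\]
over $F_U$, of dimension $4u(\kappa(\til Q))$. Since $\wh R_U$ is $t$-adically complete, the $t$-adic valuation $v_t$ on $F_U$ is henselian; applying Springer's theorem reduces anisotropy of $\phi$ to anisotropy of $\<1,-\bar y\> \otimes \phi_0$ over the residue field $M_1$ of $v_t$. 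A second application of Springer, after passing to the henselization of $M_1$ at the $\bar y$-adic valuation (with residue field $\kappa(\til Q)$), then reduces further to anisotropy of $\phi_0$ over $\kappa(\til Q)$, which holds by construction.

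The main technical obstacle is the second Springer step, since $M_1$ is not itself henselian at $v_{\bar y}$; one handles this by passing to its henselization (or $v_{\bar y}$-adic completion) and using that $M_1$ embeds into this extension, so anisotropy over the henselization implies anisotropy over $M_1$. When $U$ meets more than one irreducible component of $X$, the ring $\wh R_U / t\wh R_U$ is not a domain, and one must either work at the minimal prime corresponding to the component containing $\til Q$ or replace $U$ by a smaller irreducible neighborhood $U' \subseteq U$ of $Q$ (noting that $F_U \subseteq F_{U'}$, so anisotropy over $F_{U'}$ suffices). This is carried out exactly as in \cite[Theorem~4.1]{HHK:Weier}.
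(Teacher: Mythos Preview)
Your proposal is correct and matches the paper's approach exactly: the paper states that the proof is as for \cite[Theorem~4.1]{HHK:Weier}, but using Corollary~\ref{Weier cor} in place of the less general \cite[Corollary~3.3(a)]{HHK:Weier}, which is precisely the substitution you carry out. Your handling of both parts, including the reduction to a form over $F$ via $a_i = b_i c_i^2$ in part~(\ref{FW upper bound for u}) and the double Springer argument (with the passage to a smaller irreducible $U' \subseteq U$ when $U$ meets several components) in part~(\ref{FW lower bound for u}), reproduces the argument of \cite[Theorem~4.1]{HHK:Weier} faithfully.
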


See also \cite[Corollary~3.7]{Cuong}.

\begin{thm} \label{combined thm qf}
Let $E$ be one of the following:
\begin{compactenum}
\renewcommand{\theenumi}{(\roman{enumi})}
\renewcommand{\labelenumi}{(\roman{enumi})}
\item \label{combined qf small patch}
the fraction field of a two-dimensional Noetherian complete local domain $R$ that is regular or equicharacteristic; or
\item \label{combined qf big patch}
a finite separable extension of the fraction field of the $t$-adic completion of $T[x]$, where $T$ is a complete discrete valuation ring with uniformizer $t$.
\end{compactenum}
Assume that the residue field $k$ of $R$ (resp.\ $T$) does not have characteristic two.   
\begin{compactenum}
\renewcommand{\theenumi}{\alph{enumi}}
\renewcommand{\labelenumi}{(\alph{enumi})}
\item \label{extension upper bound for u}
Then $u(E) \le 4u_s(k)$.
\item \label{extension value for u}
If $u(k)=u_s(k)$, and if 
$u(k')=u(k)$ for
every finite extension $k'$ of $k$,
then $u(E)=4u(k)$.   
\end{compactenum}
\end{thm}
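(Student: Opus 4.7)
My plan is to first reduce both cases to geometric models. In case (i), Lemma~\ref{Cohen lemma} realizes $E \cong F_P$ for some closed point $P$ on a normal projective $T$-curve $\wh X$, with the residue field of the underlying complete discrete valuation ring equal to $k$. In case (ii), Proposition~\ref{descend_exten}(b) gives $E \cong F_{U'}$ for a connected affine open $U'$ on the closed fiber of a normal projective $T$-curve.

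For part~(a), case~(ii) follows immediately from Proposition~\ref{u-inv of FW}(a), which gives $u(F_{U'}) \le 4 u_s(k)$. The main obstacle will be the upper bound $u(F_P) \le 4 u_s(k)$ in case~(i). Here I would take a form $q = \<a_1,\dots,a_n\>$ over $F_P$ with $a_i \in \wh R_P$ and $n > 4 u_s(k)$, and aim to show that $q$ is isotropic. Using resolution of singularities (\cite{Abh},\cite{Lip}) together with Weierstrass preparation (\cite[Cor.~3.7]{HHK:Weier}), I would obtain a birational projective morphism $\pi \colon \wh X' \to \wh X$, an isomorphism away from $P$, such that $\wh X'$ is regular in a neighborhood of $V := \pi^{-1}(P)$ and the support of the $a_i$ is a normal crossings divisor along $V$. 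Fix a finite set $\mc P \subset V$ of closed points containing $V \cap \til X$, and let $\mc W$ and $\mc B$ be the resulting sets of components of $V \smallsetminus \mc P$ and branches. Proposition~\ref{blowup patch} then yields patching for the diamond $F_{P\bullet}$, and Corollary~\ref{rational fact cor} gives factorization for $\SO(nh)(F_{P\bullet})$ for every $n$; so Theorem~\ref{quad form abstract applic}(d) reduces the isotropy of $q$ over $F_P$ to isotropy over each $F_U$ ($U \in \mc W$) and each $F_Q$ ($Q \in \mc P$). Isotropy over $F_U$ is immediate from Proposition~\ref{u-inv of FW}(a), since $u(F_U) \le 4 u_s(k) < n$. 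For $F_Q$, regularity and the normal crossings condition let one rescale each $\<a_i\>$ by a square, putting $q$ into the form $q \cong q_{00} \perp \pi_1 q_{10} \perp \pi_2 q_{01} \perp \pi_1\pi_2 q_{11}$ where $(\pi_1,\pi_2)$ is a regular system of parameters of $\wh R_Q$ and every $q_{ij}$ has coefficients in $\wh R_Q^\times$; pigeonhole on $\sum \dim q_{ij} > 4u_s(k) \ge 4u(\kappa(Q))$ forces some $q_{ij}$ to have $\dim q_{ij} > u(\kappa(Q))$, hence to be isotropic over $\kappa(Q)$, and Hensel's lemma then lifts a primitive isotropic vector to $\wh R_Q$, making $q$ itself isotropic over $F_Q$.

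For part~(b), the upper bound $u(E) \le 4 u(k)$ follows from part~(a) together with $u_s(k) = u(k)$. In case~(ii), the matching lower bound comes from Proposition~\ref{u-inv of FW}(b): $u(E) \ge 4 u(\kappa(\tilde Q))$ for any closed point $\tilde Q$ of the normalization of the closed fiber above a point of $U'$, and $u(\kappa(\tilde Q)) = u(k)$ by hypothesis. In case~(i), I would pick any branch $\wp$ of the closed fiber of $\wh X$ at $P$. The residue field of the complete discretely valued field $F_\wp$ is the fraction field of the one-dimensional complete local domain $\wh R_P / \wp$, and hence is of the form $\kappa'((z))$ for some finite extension $\kappa'/\kappa(P)$. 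Two applications of Springer's theorem \cite[Prop.~2]{Spr} then yield $u(F_\wp) = 4 u(\kappa') = 4 u(k)$. Corollary~\ref{uinv max}(b), applied after blowing up $\wh X$ at $P$ so that $\wp$ induces a valuation in $\Omega$, then gives $u(F_P) \ge u(F_\wp) = 4 u(k)$, completing the proof.
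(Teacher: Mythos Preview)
Your proof is correct, and for case~(ii) it is essentially identical to the paper's: both invoke Proposition~\ref{descend_exten}(\ref{descend_big_patch}) to realize $E$ as some $F'_{U'}$ and then apply Proposition~\ref{u-inv of FW}.

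For case~(i) the routes diverge. The paper disposes of this case in one line by citing \cite[Theorem~4.1]{HHK:Weier}, which already gives both bounds for fields of the form $F_P$ once one chooses a suitable set $\mc P$ on the closed fiber. You instead reconstruct that result from scratch using the machinery of the present paper: resolution of singularities, patching on the exceptional divisor via Proposition~\ref{blowup patch}, the local-global isotropy criterion of Theorem~\ref{quad form abstract applic}(\ref{abstract local-global for isotropy}), and a direct normal-crossings/pigeonhole/Hensel argument at closed points. This is a legitimate and self-contained alternative; it has the virtue of showing explicitly how the new refinement tools recover the earlier result, at the cost of being considerably longer than a citation. One small point: to apply Proposition~\ref{blowup patch} you need the blowup to be an isomorphism away from $P$, so you should first arrange that $\wh X$ is already regular away from $P$ (which does not affect $F_P$).

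For the lower bound in case~(i), your branch-and-Springer argument is correct but heavier than necessary. You do not really need Corollary~\ref{uinv max}(\ref{uinv max equalities}) or membership in $\Omega$: the elementary fact $u(L) \ge 2u(\kappa_v)$ for any discrete valuation $v$ on $L$ with residue characteristic $\ne 2$ (lift an anisotropic form and take $\tilde q_0 \perp \pi \tilde q_0$), applied twice along $\wp$ and then along the valuation on $\kappa(\wp)$, already gives $u(F_P) \ge 4u(\kappa') = 4u(k)$. Also, your claim that the residue field of $F_\wp$ is ``of the form $\kappa'((z))$'' should be phrased more carefully as ``is a complete discretely valued field with residue field a finite extension $\kappa'$ of $\kappa(P)$''; the conclusion via Springer is the same.
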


\begin{proof}
In case~\ref{combined qf small patch}, $E$ is of the form $F_P$ 
by Lemma~\ref{Cohen lemma}.  The assertion then follows in this case from
\cite{HHK:Weier}, Theorem~4.1, by choosing a finite set of points $\mc P$ 
on the closed fiber $X$ of the model $\wh X$, such that $\mc P$ contains $P$ 
and the points where distinct components of $X$ meet.

In case~\ref{combined qf big patch}, $E$ is a finite separable extension of 
$F_U$, where $U = \mbb A^1_k$, viewed as an open subset of the closed fiber of $\mbb P^1_T$.  By Proposition~\ref{descend_exten}(\ref{descend_big_patch}), $E$ is isomorphic to a field $F'_{U'}$ for some finite extension $F'$ of $F$, where $F$ is the fraction field of $T[x]$, and for some non-empty connected open subset $U' \subset X'$. (Here $X'$ is the closed fiber of a projective normal model $\wh X'$ of $F'$.)  By Proposition~\ref{u-inv of FW}, $u(E)=u(F'_{U'})\le 4u_s(k)$, proving part~(\ref{extension upper bound for u}).  For part~(\ref{extension value for u}), $u(E) \le 4u_s(k) = 4u(k)$ by part~(\ref{extension upper bound for u}); and the reverse inequality follows from 
Proposition~\ref{u-inv of FW}(\ref{FW lower bound for u}), using that 
$u(\kappa(\til Q))=u(k)$ by hypothesis, for any closed point $Q \in U'$.
\end{proof}

\begin{example} \label{An2 u-inv extension}
Theorem~\ref{combined thm qf} applies in particular to the broad class of fields $k$ that satisfy Leep's $A_n(2)$ property.   
Recall that $k$ is called an $A_n(2)$ field if for every $r>0$, every system of $r$ homogeneous forms of degree two over $k$ in more than $r\cdot 2^n$ variables has a non-trivial zero 
over a finite extension of $k$ having odd degree.
(see \cite[Section~2]{Leep}).
Every $C_n$ field is an $A_n(2)$ field (\cite{SSS}, Lemma IV.3.7), 
but not conversely.
Although $p$-adic fields are not $C_n$ for any $n$, they are $A_2(2)$ field for all primes $p$, including $p=2$ (see \cite{Leep}, Corollary~2.7).  
Moreover if $k$ is an $A_n(2)$ field then so is every finite extension of $k$ (\cite{Leep}, Theorem~2.5); and the fields $k(z)$ and $k((z))$ are $A_{n+1}(2)$ fields (\cite{Leep}, Theorem~2.3).  Since $u(k)\le 2^n$ for an $A_n(2)$ field
(\cite{Leep}, Theorem~2.2), it follows from the above properties that $u_s(k)\le 2^n$, and thus $u(E) \le 2^{n+2}$ in the notation of the corollary, provided $\cha(k) \ne 2$.
In the special case that $u(k')=2^n$ for every finite extension $k'/k$, it follows from \cite[Lemma~3.2]{Leep} that $u(k)=u_s(k)=2^n$.  Thus the hypotheses of
Theorem~\ref{combined thm qf}(\ref{extension value for u}) hold.
\end{example}

As a consequence, we obtain Theorem~\ref{intro thm qf}:

\begin{proof}[Proof of Theorem~\ref{intro thm qf}]
By Example~\ref{An2 u-inv extension}, the fields $k$ are respectively $A_0(2)$, $A_1(2)$, $A_2(2)$, $A_3(2)$, as are their finite extensions; and moreover the hypotheses of Theorem~\ref{combined thm qf}(\ref{extension value for u}) hold (using also \cite[Theorem~3.4]{Leep} in the last two cases).
We conclude by that theorem.
\end{proof} 

As pointed out to us by David Leep, the case of the theorem for $u\bigl(k((x,t))\bigr)$,
where $k=\mbb Q_p$ or $\mbb Q_p(z)$ or $\mbb Q_p((z))$, can be deduced directly from 
results in \cite{Leep}.  Namely, if $k$ is an $A_n(2)$-field of characteristic unequal to two, then $k((t))(x)$ is an $A_{n+2}(2)$ field by \cite[Theorem~2.3]{Leep} and so 
$u\big(k((t))(x)\big) \le 2^{n+2}$.  If $u(k)=2^n$ (as in the case of $k=\mbb Q_p$  or $\mbb Q_p(z)$ or $\mbb Q_p((z))$), it then follows that $u\big(k((x,t))\big)=2^{n+2}$ by 
\cite[Proposition~5.1]{Leep}.

\smallskip

Theorem~\ref{combined thm qf} also provides 
explicit values for the $u$-invariant in mixed characteristic, when the residue characteristic is odd:

\begin{cor} \label{mixed char uinv}
Let $p$ be an odd prime, and $\mbb Z_p^{\mathrm{ur}}$ the maximal unramified extension of $\mbb Z_p$.  Let $R$ be $\mbb Z_p[[x]]$ or the $p$-adic completion of $\mbb Z_p[x]$
(resp.~the $p$-adic completion of $\mbb Z_p^{\mathrm{ur}}[[x]]$ or of
$\mbb Z_p^{\mathrm{ur}}[x]$).
Let $E$ be the fraction field of a finite extension $S$ of $R$; and if $R=\mbb Z_p[[x]]$ or the $p$-adic completion of $\mbb Z_p^{\mathrm{ur}}[[x]]$, assume that $S$ is regular.
Then $u(E)=8$ (resp.~$4$).
\end{cor}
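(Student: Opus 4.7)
The plan is to combine Theorem~\ref{combined thm qf} (directly applicable in the polynomial-ring cases) with the local blowup patching of Corollary~\ref{extension blowup both cases} (needed in the power-series cases), and to obtain the matching lower bound from Springer's theorem applied to the $p$-adic valuation on $E$. In all four cases the residue field $k$ equals $\mbb F_p$ or $\bar{\mbb F}_p$, for which $u(k)=u_s(k)=2$ or $1$ respectively, and this value is preserved by finite extensions (as in Example~\ref{An2 u-inv extension}); so the numerical hypotheses of Theorem~\ref{combined thm qf}(\ref{extension value for u}) are satisfied with $4u(k)$ equal to $8$ or $4$.

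When $R$ is the $p$-adic completion of $\mbb Z_p[x]$ (resp.\ of $\mbb Z_p^{\mathrm{ur}}[x]$), the ring $R$ is the $t$-adic completion of $T[x]$ for $T=\mbb Z_p$ (resp.\ $T=\widehat{\mbb Z_p^{\mathrm{ur}}}$) with $t=p$, so any finite extension $E$ (automatically separable since $\cha E=0$) is covered by case~(ii) of Theorem~\ref{combined thm qf}, yielding $u(E)=4u(k)$. When $R=\mbb Z_p[[x]]$ (resp.\ $R$ is the $p$-adic completion of $\mbb Z_p^{\mathrm{ur}}[[x]]$, which is identified with $\widehat{\mbb Z_p^{\mathrm{ur}}}[[x]]$), the ring $R$ is regular of the form $T[[x]]$ and $\mathrm{Frac}(R)=F_P$ with $P=(p,x)$ on $\mbb P^1_T$, by Lemma~\ref{Cohen lemma}. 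Since the integral closure of $R$ in $E$ need not be regular, case~(i) of Theorem~\ref{combined thm qf} does not directly apply to $E$; instead, I would invoke Corollary~\ref{extension blowup both cases}, Case~I, with $\xi=P$ applied to the integral closure $S$ of $\wh R_P$ in $E$. This provides a birational projective $\pi\colon\wh V\to\Spec(S)$ together with finite sets $\mc P,\mc W,\mc B$ on the exceptional locus such that patching holds for the injective diamond of fields $E_\bullet=(E\le\prod_{Q\in\mc P}F_Q,\prod_{U\in\mc W}F_U\le\prod_{\wp\in\mc B}F_\wp)$. By Theorem~\ref{patching prop thm}(\ref{patching torsors}), patching for $E_\bullet$ upgrades to patching for $\SO(nh)$-torsors, which in turn forces the triviality of the local-global kernel for $\SO(nh)$-torsors; by Theorem~\ref{factor local-global} this is equivalent to factorization for $\SO(nh)(E_\bullet)$ for every $n\ge 1$. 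With patching and $\SO(nh)$-factorization established, Theorem~\ref{quad form abstract applic}(\ref{abstract local-global for isotropy}) yields a local-global principle for isotropy of quadratic forms over $E_\bullet$, reducing the inequality $u(E)\le 4u_s(k)$ to the corresponding bounds $u(F_\xi)\le 4u_s(\kappa(\xi))\le 4u_s(k)$ for $\xi\in\mc P\cup\mc W$, which are supplied by \cite[Theorem~4.1]{HHK:Weier} applied to a regular refinement of $\wh V$ (obtained by further blowing up and enlarging $\mc P$, as allowed by the remark following Corollary~\ref{extension blowup both cases}).

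For the matching lower bound, the height-one prime $(p)$ in the integral closure $R'$ of $R$ in $E$ defines a discrete valuation $v$ on $E$ whose completion $E_v$ is a complete discretely valued field with residue field $\kappa_v$ equal to a finite extension of $\mathrm{Frac}(R/pR)$, a one-variable function field over $k$. By Tsen--Lang, $\kappa_v$ is a $C_2$-field when $k=\mbb F_p$ and a $C_1$-field when $k=\bar{\mbb F}_p$, giving $u(\kappa_v)=4$ and $u(\kappa_v)=2$ respectively. Since $p$ is odd, Springer's construction produces anisotropic forms of the shape $q_1\perp p\,q_2$ over $E_v$ of dimension $2u(\kappa_v)=8$ (resp.\ $4$), where $q_1,q_2$ are anisotropic forms over $\kappa_v$ lifted coefficientwise to the valuation ring of $v$ in $E$. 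Such lifted forms are already defined over $E$ and remain anisotropic there, whence $u(E)\ge 2u(\kappa_v)$. Combining with the upper bound yields $u(E)=8$ (resp.\ $4$) as asserted. The main technical hurdle is verifying the upper bound in the power-series subcases, where the blowup patching diamond must be set up correctly and the chain from ring patching to $\SO(nh)$-factorization executed so that the quadratic-form local-global principle becomes available.
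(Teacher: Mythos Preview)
The paper's proof is much shorter: it simply applies Theorem~\ref{combined thm qf} in all four cases, taking $T=\mbb Z_p$ (resp.\ $T=\widehat{\mbb Z_p^{\mathrm{ur}}}$) and verifying the hypotheses of part~(\ref{extension value for u}) with $u_s(\mbb F_p)=2$ (resp.\ $u_s(\bar{\mbb F}_p)=1$). You are right to observe that case~(i) of that theorem, as written, demands that $E$ itself be the fraction field of a regular (or equicharacteristic) two-dimensional complete local domain, and that the integral closure of $\mbb Z_p[[x]]$ in $E$ need not be regular; so for the power-series cases the paper's citation is at best compressed. Your separate lower-bound argument via Springer's theorem is correct and is an acceptable alternative to the paper's route through part~(\ref{extension value for u}).

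However, your proposed repair for the upper bound in the power-series cases contains a genuine error. You claim that ``patching for $E_\bullet$ upgrades to patching for $\SO(nh)$-torsors, which in turn forces the triviality of the local-global kernel for $\SO(nh)$-torsors.'' The first step is fine (Theorem~\ref{patching prop thm}(\ref{patching torsors})), but the second conflates two distinct notions. \emph{Patching for $G$-torsors} (Definition~\ref{patching prob def}) means that the base-change functor from $\tors(G_E)$ to the $2$-fiber product of torsor categories is an equivalence; by Theorem~\ref{patching prop thm} this holds for \emph{every} linear algebraic $G$ as soon as patching holds for the ring diamond $E_\bullet$. The \emph{local-global principle for $G$-torsors}, by contrast, is the triviality of the kernel of $H^1(E,G)\to\prod_v H^1(F_v,G)$, and by Theorem~\ref{factor local-global} this is equivalent to factorization for $G(E_\bullet)$ --- a genuinely stronger, $G$-dependent condition that does not follow from patching alone. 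In the paper, factorization for $\SO(nh)$ over the diamonds $F_\bullet$, $F_{W\bullet}$, $F_{P\bullet}$ is supplied by Corollary~\ref{rational fact cor} (ultimately via \cite[Theorem~3.6]{HHK} and Proposition~\ref{diamond group implication}), and those results require the group to be defined over the function field $F$ of a projective normal $T$-curve; no analogue is established for the diamond $E_\bullet$ of Corollary~\ref{extension blowup both cases}, whose base field $E$ is only a finite extension of some $F_P$. Without factorization for $\SO(nh)(E_\bullet)$, Theorem~\ref{quad form abstract applic} is unavailable and your upper-bound argument does not go through. (Note the contrast with the Brauer-group applications in Section~\ref{csa section}: Theorem~\ref{Brauer} needs only patching, because factorization for $\PGL_n$ follows from that for $\GL_n$; there is no such shortcut for $\SO(nh)$.)
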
  

\begin{proof}
In the first two cases, let $T = \mbb Z_p$ and apply 
Theorem~\ref{combined thm qf}, using that the hypotheses of
part~(\ref{extension value for u}) hold with $u_s(\mbb F_p)=2$, by \cite[Theorem~4.10]{HHK}.  In the other cases, let $T$ be the $p$-adic completion of $\mbb Z_p^{\mathrm{ur}}$; this is the ring of Witt vectors of $\overline {\mbb F}_p$.  Again the hypotheses of Theorem~\ref{combined thm qf}(\ref{extension value for u}) hold, this time with $u_s(\overline {\mbb F}_p)=1$.
\end{proof}

In the case of mixed characteristic with residue characteristic two, we obtain the following, by combining \cite{PS:PIu} with Theorem~\ref{combined thm qf}:

\begin{cor} \label{char 2 res fld u-inv}
Let $k$ be a complete discretely valued field of characteristic zero whose residue field $\kappa$ is perfect of characteristic two.  
If $E$ is a finite extension of $k((x,t))$ or of the fraction field of $k[x][[t]]$, then $u(E) \le 16$. 
\end{cor}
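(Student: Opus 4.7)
The plan is to apply Theorem~\ref{combined thm qf}(a) in both cases, taking advantage of the fact that $k$ itself is a field of characteristic zero even though its residue field $\kappa$ has characteristic two; the bound on $u(E)$ then reduces to a bound on $u_s(k)$, which we read off from \cite{PS:PIu}.

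In case~1, let $R$ be the integral closure of $k[[x,t]]$ in $E$.  Since $k[[x,t]]$ is a two-dimensional Noetherian complete local domain and $R$ is finite over it, $R$ is also a two-dimensional Noetherian complete local domain (local by Hensel's lemma, since $k[[x,t]]$ is), and $R$ is equicharacteristic because $E$ has characteristic zero.  Its residue field $k'$ is a finite extension of $k$, hence a characteristic zero field.  Case~(i) of Theorem~\ref{combined thm qf} therefore applies, yielding $u(E) \le 4u_s(k')$.  Any finitely generated extension of $k'$ of transcendence degree $i \le 1$ is also such an extension of $k$, so $u_s(k') \le u_s(k)$.  In case~2, a direct computation identifies $k[x][[t]]$ with the $t$-adic completion of $T[x]$, where $T := k[[t]]$ is a complete discrete valuation ring with uniformizer $t$ and residue field $k$ (of characteristic zero); case~(ii) of Theorem~\ref{combined thm qf} then gives $u(E) \le 4u_s(k)$ directly.

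It remains to show $u_s(k) \le 4$, i.e., to verify that $u(L) \le 4$ for every finite extension $L/k$ and $u(L) \le 8$ for every finitely generated $L/k$ of transcendence degree one.  Every such $L$ is either again a complete discretely valued field of characteristic zero with perfect residue field of characteristic two, or the function field of a curve over such a field; both bounds are supplied by \cite{PS:PIu}, which is precisely tailored to the mixed characteristic $(0,2)$ setting (compare the invocation of \cite[Theorem~4.7]{PS:PIu} already used in the proof of Corollary~\ref{uinv max}(c)).  Combining with the previous paragraph gives $u(E) \le 4 \cdot 4 = 16$.

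The main obstacle is confirming that the bounds from \cite{PS:PIu} apply in the generality of an arbitrary perfect residue field of characteristic two, not merely a finite one; once this is granted, the assembly is a direct synthesis of Theorem~\ref{combined thm qf} with the definition of $u_s$, and no further patching-theoretic input is required.
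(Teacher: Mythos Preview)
Your approach is essentially the paper's: reduce to Theorem~\ref{combined thm qf}(\ref{extension upper bound for u}) and then verify $u_s(k)\le 4$. Your Case~1 is in fact a bit more careful than the paper's, which simply invokes the theorem without spelling out the passage through the integral closure $R$ and its residue field $k'$.

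One point to sharpen: the bound $u(L)\le 4$ for \emph{finite} $L/k$ does not come from \cite{PS:PIu}. The paper obtains it in two steps: every finite extension $\lambda/\kappa$ is perfect of characteristic two, so $u(\lambda)\le 2$ by \cite[Corollary~1]{MMW}; then $u(L)\le 2u(\lambda)\le 4$ by Springer's theorem \cite[Proposition~2]{Spr}, since $L$ is complete discretely valued with residue field $\lambda$ of characteristic $\ne 2$\ldots wait, $\lambda$ has characteristic two, so Springer does not apply directly. In fact \cite[Proposition~2]{Spr} requires residue characteristic $\ne 2$; the paper's citation of Springer here is for the direction ``$u$ of complete field $\le 2u$ of residue field,'' and that inequality does hold in this generality (the proof only needs lifting of anisotropic forms, not the full isometry statement). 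So the chain is \cite{MMW} $+$ Springer for the degree-zero case, and \cite[Theorem~4]{PS:PIu} only for the transcendence-degree-one case. Your blanket citation of \cite{PS:PIu} for ``both bounds'' should be replaced accordingly; once that is done, the argument is complete and matches the paper's.
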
 

\begin{proof} Every finite extension $\lambda$ of $\kappa$ is perfect, so $u(\lambda) \le 2$
by \cite[Corollary~1]{MMW}.  Thus $u(\ell)\le 4$ for every finite extension $\ell$ of $k$,
by a theorem of Springer (\cite[Proposition~2]{Spr}).  By \cite[Theorem~4]{PS:PIu}, $u(F) \le 8$ for every finitely generated extension $F$ of $k$ of transcendence degree one.  Thus $u_s(k)\le 4$.  We conclude by Theorem~\ref{combined thm qf}, where separability holds since $\cha(k)=0$.
\end{proof}

\subsection{Applications to central simple algebras} \label{csa section}

Finally, we turn to applications of our results to central simple algebras, especially concerning the period and index of elements of the Brauer group of fields of the sort considered in Section~\ref{patches}, and their finite extensions.  
In particular, for a finite separable extension $E$ of a field of the form $F_P$ or $F_U$, we find an integer $d$ such that $\ind(\alpha)$ divides $\per(\alpha)^d$ for $\alpha \in \Br(E)$.  
See Theorems~\ref{per-ind combination thm} and~\ref{per-ind comb mixed char}, as well as the associated corollaries, for the precise statements,
which strengthen and extend results in \cite[Section~4]{HHK:Weier}.  
For example, for two-dimensional $p$-adic cases, we obtain a sharp bound for the period-index bound $d$ regardless of the period of $\alpha$; see 
Theorems~\ref{combined thm csa} and~\ref{intro cor csa}.

As in Section~\ref{quad form section}, we begin with an abstract local-global result 
(Theorem~\ref{Brauer}) that applies to diamonds that satisfy patching.  
But unlike the analogous
Theorem~\ref{quad form abstract applic}, Theorem~\ref{Brauer} does not require a 
factorization hypothesis.  This makes it more applicable, permitting its use in conjunction with Corollary~\ref{extension blowup both cases}, which in turn makes possible the period-index applications mentioned above for finite separable extensions of fields $F_P$ and $F_U$. Theorem~\ref{Brauer} also yields local-global results about the value of the period-index bound for the fields $F_P$ and $F_U$; see 
Example~\ref{lgp Brauer} and Corollary~\ref{per-ind max}.

Below we use that if $E$ is a product of fields $F_v$, then 
$\Br(E) = \prod \Br(F_v)$ because an Azumaya algebra over $E$ is the same as a product of central simple $F_v$-algebras.

\begin{thm}\label{Brauer}
Suppose that $F_\bullet=(F\leq F_1,F_2\leq F_0)$ is a diamond of rings with $F$ a field and each $F_i$ a finite direct product of fields.  
Write $F_i = \prod_{v \in \mc V_i} F_v$ where each $F_v$ is a field. Assume moreover that patching holds for $F_\bullet$.
Then:
\begin{enumerate}
\renewcommand{\theenumi}{\alph{enumi}}
\renewcommand{\labelenumi}{(\alph{enumi})}
\item \label{abstract ses for Br}
We have a short exact sequence \(0 \to \Br(F) \to \Br(F_1)
\times \Br(F_2) \to \Br(F_0),\)
where the map on the right is given by taking the difference of the
restrictions of the two Brauer classes.
\item  \label{abstract lgp for ind}
For a class $\alpha \in \Br(F)$, we have
\(\ind(\alpha) = \lcm \{\ind(\alpha_v) \ | \ v \in \mc V_1 \cup \mc
V_2 \}.\)
\end{enumerate}
\end{thm}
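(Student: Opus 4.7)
The argument rests on the fact that patching for free $F_\bullet$-modules (Theorem~\ref{patching prop thm}) promotes to patching for central simple algebras by Example~\ref{other categories}, and, by the same categorical principle, also to patching for finitely generated right modules over a fixed central simple $F$-algebra $A$ (i.e.\ $F$-vector spaces equipped with a compatible right $A$-action). Granted this, I would prove part~(a) in two steps, and then deduce part~(b) directly from part~(a).

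For exactness at the middle of the sequence in (a): given CSAs $A_i$ over $F_i$ with $[A_1 \otimes_{F_1} F_0] = [A_2 \otimes_{F_2} F_0]$, I replace each $A_i$ by $A_i \otimes_{F_i} M_{m_i}(F_i)$ choosing $m_1,m_2$ so that $m_1\deg(A_1)=m_2\deg(A_2)$. On each factor field $F_w$ of $F_0$ the two base changes are then CSAs of the same degree representing the same Brauer class, and are therefore isomorphic; any such isomorphism over $F_0$ furnishes a patching datum of CSAs whose solution $A/F$ maps to $([A_1],[A_2])$.

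For injectivity in part~(a): let $A/F$ be a CSA of degree $d$ with $[A_{F_i}]=0$, so that $A_{F_i}\cong M_d(F_i)$ admits a faithful simple right module $I_i$ of $F_i$-dimension $d$. Over $F_0$ the modules $I_1\otimes F_0$ and $I_2\otimes F_0$ are faithful simple right $A_{F_0}$-modules of $F_0$-dimension $d$, hence are $A_{F_0}$-isomorphic. Fixing such an isomorphism gives a patching datum in the category of right $A$-modules; patching for this category produces an $A$-module $M$ with $\dim_F M=d$. The structure map $A^{\op}\to\operatorname{End}_F(M)$ is nonzero, hence injective by simplicity of $A^{\op}$, and is therefore a dimension-forced isomorphism; so $A\cong M_d(F)$ and $[A]=0$. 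Part~(b) then follows: the divisibility $\lcm_v\ind(\alpha_v)\mid\ind(\alpha)$ is automatic from base change, and conversely, setting $n:=\lcm_v\ind(\alpha_v)$ and writing $\alpha_v=[D_v]$ with $D_v$ the underlying division algebra, the product $A_i:=\prod_{v\in\mc V_i} M_{n/\ind(\alpha_v)}(D_v)$ is a CSA of degree $n$ over $F_i$ representing $\alpha_{F_i}$; the exactness argument above glues $A_1,A_2$ to a CSA $A/F$ of degree $n$ with $[A]_{F_i}=\alpha_{F_i}$, and injectivity from part~(a) forces $[A]=\alpha$, whence $\ind(\alpha)\mid n$.

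The main technical point is the extension of patching from free modules to right $A$-modules, used in the injectivity step. This is exactly the Tannakian-style principle invoked in Example~\ref{other categories}: a right $A$-module is a free $F$-module together with a compatible $A$-action, a piece of additional structure defined diagrammatically in terms of the tensor category of vector spaces, and therefore preserved by the patching equivalence of categories.
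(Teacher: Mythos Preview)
Your proof is correct and takes a genuinely different route from the paper's argument.

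For injectivity in part~(a), the paper argues cohomologically: since patching holds, factorization holds for $\GL_n(F_\bullet)$; surjectivity of $\GL_n \to \PGL_n$ on points (Hilbert~90) then gives factorization for $\PGL_n(F_\bullet)$, and Theorem~\ref{factor local-global} converts this into triviality of the kernel of $H^1(F,\PGL_n) \to \prod_v H^1(F_v,\PGL_n)$. Your argument instead stays inside the patching equivalence itself, gluing simple $A_{F_i}$-modules to produce a $d$-dimensional right $A$-module and reading off $A \cong M_d(F)$ by a dimension count. This is more elementary and avoids the torsor machinery entirely.

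For part~(b) the difference is sharper. The paper introduces the generalized Severi--Brauer variety $\SB_i(A)$, proves factorization for $\PGL_1(A)(F_\bullet)$ (by showing, via part~(a), that the local-global map on $H^1(F,\PGL_1(A))$ has trivial kernel), and then invokes Theorem~\ref{factor local-global} to produce an $F$-point on $\SB_i$. Your argument bypasses Severi--Brauer varieties completely: you directly patch degree-$n$ representatives of $\alpha_{F_i}$ to a degree-$n$ central simple $F$-algebra $A$, and injectivity from part~(a) forces $[A]=\alpha$, so $\ind(\alpha) \mid \deg(A)=n$. This is shorter and uses only the exactness already established. The paper's route, by contrast, illustrates how the factorization/local-global framework of Theorem~\ref{factor local-global} applies uniformly to transitive $G$-varieties, which is thematically consistent with the quadratic-form applications elsewhere in the paper.

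One small point worth making explicit: your appeal to Example~\ref{other categories} for patching of right $A$-modules is legitimate, since a right $A$-module is a vector space $M$ together with a structure morphism $M \otimes_F A \to M$ satisfying diagrammatic axioms, and the patching equivalence of tensor categories transports such data. Example~\ref{other categories} only names central simple algebras explicitly, but the underlying reference (\cite{HH:FP}, Theorems~7.1 and~7.5) covers finitely generated modules over a fixed $F$-algebra as well.
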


\begin{proof}
For any field $L$, the natural
map $\GL_n(L) \to \PGL_n(L)$ is surjective,
by Hilbert's Theorem~90  and the 
long exact cohomology sequence associated to 
the short exact sequence of algebraic groups
\(1 \to \mbb G_m \to \GL_n \to \PGL_n \to 1.\)
Now factorization for $\GL_n$ holds for $F_\bullet$,
since $F_\bullet$ has the patching property (see Theorem~\ref{patching prop thm}(\ref{patching and matrix factorization})).  
The above surjectivity then implies that 
factorization for $\PGL_n$ also holds for $F_\bullet$.
Theorem~\ref{factor local-global} then in turn implies that 
the map of pointed sets \(H^1(F, \PGL_n) \to \prod_v H^1(F_v, \PGL_n)\)
has trivial kernel.

Recall that $H^1(F, \PGL_n)$ classifies isomorphism classes of central simple $F$-algebras of degree $n$ (\cite[p.~396]{BofInv}).
So if $A$ is a central simple
$F$-algebra such that $A_{F_v}$ is split for each $v$, then $A$
itself is split.  Thus the homomorphism
\(\Br(F) \to \prod_v \Br(F_v)\) is injective.

Now, suppose that we have classes
\(\alpha_i \in \Br(F_i) = \prod_{v \in \mc V_i} \Br(F_v)\)
for $i = 1, 2$ such that $(\alpha_1)_{F_0} = (\alpha_2)_{F_0}$. We
wish to show that there is an $\alpha \in \Br(F)$ with $\alpha_{F_i} =
\alpha_i$.
Choose a positive integer $n$ that is divisible by $\ind(\alpha_v)$
for each $v$. We may then choose central simple algebras $A_v$ over
$F_v$ of degree $n$ such that the class of $A_i = \prod_{v \in \mc
V_i} A_v$ in $\Br(F_i) = \prod_{v \in \mc V_i} \Br(F_v)$ is
$\alpha_i$. Since $(\alpha_1)_{F_0} = (\alpha_2)_{F_0}$, the algebras
$(A_1)_{F_0}$ and $(A_2)_{F_0}$ are Brauer equivalent, and thus isomorphic, being of the same degree.  
Using patching for central simple algebras 
(see Example~\ref{other categories}),
there is a central simple $F$-algebra $A$ such that $A_{F_i}
\cong A_i$, compatibly.  This gives exactness of the
given sequence, proving part~(\ref{abstract ses for Br}).

We now turn to part~(\ref{abstract lgp for ind}).
By \cite[Proposition~13.4(iv)]{Pie}, 
$\ind(\alpha_{F_v})$ divides $\ind(\alpha)$. 
It thus suffices to show that if 
each $\ind(\alpha_{F_v})$ divides an integer $i$ then so does $\ind(\alpha)$.
Choose a central simple algebra $A$ with
Brauer class $\alpha$ and with some degree $n>i$. Let $\SB_i$ be
the $i$-th generalized Severi-Brauer variety, parametrizing $ni$-dimensional right
ideals of $A$ (see \cite[p.~334]{vdB}, \cite[Theorem~3.6]{See:BS}, or
\cite[p.~255]{HHK}).
For any field extension $L/F$, the group $\PGL_1(A)(L)$ acts transitively on the $L$-points of 
$\SB_i$, via \cite[Proposition~1.12, Definition~1.9]{BofInv} 
(see also \cite[p.~255]{HHK}).
Also, $\ind(A_L)$ divides $i$ if and only if 
$\SB_i(L) \neq \varnothing$, by 
\cite[Proposition~1.17]{BofInv}.
In particular, $\SB_i(F_1)$ and $\SB_i(F_2)$ are non-empty. 
We claim that factorization holds for
$\PGL_1(A)(F_\bullet)$.  Assuming this for the moment, it follows from Theorem~\ref{factor local-global} that
$\SB_i(F) \neq \varnothing$.  Therefore $\ind(\alpha)$ divides $i$, as
desired.

To complete the proof of part~(\ref{abstract lgp for ind}), it remains to prove the above claim.  By Theorem~\ref{factor local-global},
it suffices to prove that the map 
\(H^1(F, \PGL_1(A)) \to \prod_v
H^1(F_v, \PGL_1(A))\)
has trivial kernel.  
Let $\beta$ be in the kernel of this map.
Now for any field $E$, the cohomology set 
$H^1(E, \PGL_1(A))$ parametrizes the set of isomorphism classes of central simple $E$-algebras of degree $n$, with the trivial element corresponding to the class of $A$
(see~\cite[Proposition~29.1 and p.~396]{BofInv}).  
Let $B$ be a central simple $F$-algebra $B$
of degree $n$ whose isomorphism class corresponds to 
$\beta$.  Thus $B \otimes A^{\op}$ induces the trivial element in $\Br(F_v)$ for all $v$; and hence $B \otimes A^{\op} \in \Br(F)$ is itself trivial
by part~(\ref{abstract ses for Br}).  Equivalently, $\beta$ is 
trivial in $H^1(F, \PGL_1(A))$.  This proves the claim and hence the result.
\end{proof}

\begin{example} \label{lgp Brauer}
Under the hypotheses of Proposition~\ref{big patch ok} (resp.\ Proposition~\ref{local patching} or Proposition~\ref{blowup patch} or Corollary~\ref{extension blowup both cases}), the conclusions of 
Theorem~\ref{Brauer} hold, since its hypotheses hold by those propositions.  In particular, a central simple algebra $A$ over $F$ (resp.\ over $F_W$ or $F_P$ or $E$) is split if and only if it is split over each $F_\xi$ for $\xi \in \mc P \cup \mc W$.  Moreover the index of $A$ is the least common multiple of the indices of the algebras $A_{F_\xi}$.  See also \cite[Theorem~7.2]{HH:FP},  \cite[Theorem~5.1]{HHK}, and
\cite[Theorem~2]{RS}
 for related results.
\end{example}

\begin{cor}  \label{per-ind max}
Let $L$ be the field $F$ (resp.\ $F_W$ or $F_P$ or $E$) in the situation
of Proposition~\ref{big patch ok} (resp.\ Proposition~\ref{local patching} or Proposition~\ref{blowup patch} or Corollary~\ref{extension blowup both cases}).  
Let $d$ be a positive integer and let $\alpha \in \Br(L)$.  
For $\xi \in \mc P \cup \mc W$, let $\alpha_\xi$ be the image of $\alpha$
in $\Br(F_\xi)$.  
If 
$\ind(\alpha_\xi)$ divides $\per(\alpha_\xi)^d$ for all $\xi \in \mc P \cup \mc W$, then $\ind(\alpha)$ divides $\per(\alpha)^d$.
\end{cor}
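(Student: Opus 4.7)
The plan is to reduce the corollary immediately to Theorem~\ref{Brauer}(\ref{abstract lgp for ind}). In each of the four situations cited (Propositions~\ref{big patch ok}, \ref{local patching}, \ref{blowup patch}, or Corollary~\ref{extension blowup both cases}), patching has already been established for the associated diamond $F_\bullet$, $F_{W\bullet}$, $F_{P\bullet}$, or $E_\bullet$; this is exactly the content of Example~\ref{lgp Brauer}. So Theorem~\ref{Brauer}(\ref{abstract lgp for ind}) applies directly, and one reads off
\[
\ind(\alpha) = \lcm\bigl\{\ind(\alpha_\xi) : \xi \in \mc P \cup \mc W\bigr\}.
\]

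Once this equality is in hand, the rest is purely formal. Because the period of a Brauer class is non-increasing under extension of scalars, each $\per(\alpha_\xi)$ divides $\per(\alpha)$, and hence $\per(\alpha_\xi)^d$ divides $\per(\alpha)^d$. Combining this with the standing hypothesis that $\ind(\alpha_\xi)$ divides $\per(\alpha_\xi)^d$, we obtain $\ind(\alpha_\xi) \mid \per(\alpha)^d$ for every $\xi \in \mc P \cup \mc W$. Taking the least common multiple over $\xi$ on the left then yields $\ind(\alpha) \mid \per(\alpha)^d$, which is the desired conclusion.

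In this sense there is essentially no obstacle: the whole corollary is a direct packaging of Theorem~\ref{Brauer}(\ref{abstract lgp for ind}) with the elementary monotonicity of the period under restriction. The only administrative point to check is that the four patching hypotheses from the cited results each really do supply a diamond satisfying the hypothesis of Theorem~\ref{Brauer} (so that the $\lcm$ formula is available); this verification has already been performed in Example~\ref{lgp Brauer}, and in the setting of Corollary~\ref{extension blowup both cases} the nonemptiness of $\mc P$ and $\mc W$ needed there is built into the statement of its patching conclusion~(iv).
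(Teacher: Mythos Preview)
Your proof is correct and follows essentially the same approach as the paper's: both invoke Example~\ref{lgp Brauer} (i.e.\ Theorem~\ref{Brauer}(\ref{abstract lgp for ind})) to obtain $\ind(\alpha)=\lcm_\xi \ind(\alpha_\xi)$, then use $\per(\alpha_\xi)\mid\per(\alpha)$ together with the hypothesis to conclude.
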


\begin{proof}
Since 
$\per(\alpha_\xi)$ divides $\per(\alpha)$ for each $\xi \in \mc P \cup \mc W$, we have that
$\ind(\alpha_\xi)$ divides $\per(\alpha)^d$ for each $\xi$.
Since $\ind(\alpha) = \lcm\bigl(\ind(\alpha_\xi)\bigr)_{\xi}$
by Example~\ref{lgp Brauer}, it follows that $\ind(\alpha)$ divides $\per(\alpha)^d$.
\end{proof}

We now turn to results that provide period-index bounds for finite separable extensions of fields of the form $F_P$ and $F_W$, in terms of such bounds for $k$.  Even in the special case of the fields $F_P$ and $F_W$ themselves, the results
strengthen \cite[Corollary~5.10]{HHK} and \cite[Theorem~4.6]{HHK:Weier} by improving the exponent on the period and also considering more general open subsets.
First we prove some lemmas.

\begin{lem} \label{small patch per-ind lemma}
Suppose that $\wh R$ is a excellent complete regular local ring of
dimension $2$ with residue field $k$ and fraction field $F$.
Let $n,d$ be positive integers such
that $\mu_n \subset k$ and such that
$\ind \alpha | (\per \alpha)^d$
for every $n$-torsion Brauer class
$\alpha \in {}_n\!\Br(k)$. Let $\beta \in {}_n\!\Br(F)$ be a Brauer class ramified
only along a regular sequence for $\wh R$. Then $\ind \beta \,|\, (\per
\beta)^{d + 2}$.
\end{lem}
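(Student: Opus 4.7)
The strategy is a standard Kummer-theoretic argument in the spirit of Saltman: pass to a Kummer extension of controlled degree to annihilate the ramification, descend the resulting unramified class to the Brauer group of the residue field, and invoke the hypothesis on $\Br(k)$.

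Set $m := \per(\beta)$, so $m \mid n$, and let $\pi_1, \pi_2$ be the regular sequence along which $\beta$ ramifies. Because $\wh R$ is a two-dimensional regular local ring, we may arrange that $(\pi_1,\pi_2)$ is a regular system of parameters of $\wh R$, replacing each $\pi_i$ by an irreducible factor cutting out the same reduced divisor if needed. Since $\mu_m \subset \mu_n \subset k \subset \wh R$, the field
\[
L \ :=\ F\bigl(\pi_1^{1/m},\pi_2^{1/m}\bigr)
\]
is a Kummer extension with $[L:F] \mid m^2$. One checks that the integral closure $\wh S$ of $\wh R$ in $L$ is again a two-dimensional complete regular local ring, with maximal ideal generated by $u_i := \pi_i^{1/m}$ and residue field equal to $k$ (uniqueness of the maximal ideal above $\mathfrak m$ follows because $X^m - \pi_i \equiv X^m \pmod{\mathfrak m}$, so $\wh R[u_i]$ is local, and the residue field is unchanged because $u_i$ reduces to $0$).

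The central claim is that $\beta_L$ is unramified on $\wh S$. By purity, it suffices to check this at the two codimension-one primes $V(u_i)$ lying over $V(\pi_i)$. Locally at $V(\pi_i)$, the extension $L/F$ is totally ramified of degree $m$ (coming from the $u_i$-adjunction; the other adjunction only affects the residue field by a constant Kummer extension, which does not contribute to ramification there). Since $\per(\beta)=m$, its residue $\partial_i(\beta)$ at $V(\pi_i)$ is $m$-torsion in the relevant $H^1(\kappa(V(\pi_i)), \mbb Z/m)$. Functoriality of the tame symbol under ramified base change multiplies this residue by the ramification index $m$, giving $m \cdot \partial_i(\beta) = 0$. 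Thus $\beta_L$ has trivial residue at both $V(u_1)$ and $V(u_2)$, and by Auslander--Goldman / Grothendieck purity it comes from a class in $\Br(\wh S) \subset \Br(L)$.

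Now, since $n$ is invertible in $k$ (because $\mu_n \subset k$) and $\wh S$ is complete local with residue field $k$, Hensel's lemma gives an isomorphism $\Br(\wh S)[n] \iso \Br(k)[n]$. Let $\bar\beta \in {}_n\!\Br(k)$ be the image of $\beta_L$; then $\per(\bar\beta) \mid \per(\beta_L) \mid m$. By hypothesis, $\ind(\bar\beta) \mid m^d$. A separable splitting field for $\bar\beta$ of degree $\ind(\bar\beta)$ over $k$ lifts uniquely via Hensel to a finite \'etale extension of $\wh S$ of the same degree; its fraction field is an unramified extension of $L$ that splits $\beta_L$. Hence $\ind(\beta_L) \mid m^d$, and combining with $[L:F] \mid m^2$ yields
\[
\ind(\beta)\ \Bigm|\ [L:F]\cdot\ind(\beta_L)\ \Bigm|\ m^2 \cdot m^d\ =\ (\per\beta)^{d+2},
\]
as required.

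The main technical obstacle is verifying the claims about $\wh S$ and the ramification cancellation. The regularity of $\wh S$ and the identification of its residue field depend crucially on $m$ being a unit in $k$ and on $(\pi_1,\pi_2)$ forming a regular system of parameters; in mixed characteristic this must be handled carefully when one of the $\pi_i$ divides the residue characteristic (which cannot occur here since $m \mid n$ is invertible in $k$). The residue calculation itself is standard once the local picture at each $V(\pi_i)$ is set up correctly.
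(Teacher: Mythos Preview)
Your argument is correct, but it follows a different route from the paper's.  The paper invokes Saltman's decomposition (\cite{Salt:Ramanujan}, Proposition~1.2) to write $\beta = \beta_0 + \gamma_1 + \gamma_2$ with $\beta_0 \in \Br(\wh R)$ unramified and each $\gamma_i$ a cyclic algebra of degree $m = \per(\beta)$.  The bound then follows at once from $\ind(\beta) \mid \ind(\beta_0)\ind(\gamma_1)\ind(\gamma_2)$, together with the identification $\Br(\wh R) \cong \Br(k)$ and the hypothesis on $\Br(k)$.

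Your approach instead kills the ramification by passing to the Kummer cover $L = F(\pi_1^{1/m},\pi_2^{1/m})$ and then invokes purity and the Hensel identification $\Br(\wh S)[n] \cong \Br(k)[n]$.  The two arguments are essentially dual: Saltman's decomposition exhibits the ``ramified part'' explicitly as cyclic algebras $(\pi_i, a_i)_m$, whereas you absorb that part into the degree $[L:F] \mid m^2$ of the Kummer extension.  The paper's route is shorter because the heavy lifting is packaged into a single citation; yours is more self-contained but requires checking the residue computation and the regularity of $\wh S$ by hand.  One small point: your reduction to a regular system of parameters (``replacing each $\pi_i$ by an irreducible factor'') tacitly assumes each $\pi_i$ is already irreducible, which is how the lemma is applied in the paper (normal crossings at a regular point) but is not literally forced by the phrase ``regular sequence''; the paper's proof makes the same implicit reading when citing Saltman.
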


\begin{proof}
Let $\beta \in {}_n\!\Br(F)$ be as above. Thus $m := \per(\beta)$ divides $n$. 
By \cite{Salt:Ramanujan}, Proposition 1.2, 
we may write $\beta = \beta_0 + \gamma_1 +
\gamma_2$, where $\beta_0 \in \Br(\wh R)$, where $\gamma_i$ are the
classes of cyclic algebras of degree $m$ and hence have index dividing
$m$. 
Thus $\gamma_1, \gamma_2$ have periods dividing $m$ and hence the same is true of $\beta_0$.
By \cite[Corollary~IV.2.13]{Milne}, we may identify $\Br(\wh R) =
\Br(k)$ via specialization.
Moreover the index of the class $\beta_0 \in \Br(\wh R) \subset \Br(F)$
divides that of its image in $\Br(k)$, since
specialization induces a natural bijection between
\'etale extensions of $k$ and of $\wh R$, and hence of \'etale splittings of associated central simple algebras.
Thus $\ind(\beta) \,|\, \ind(\beta_0)
\ind(\gamma_1)\ind(\gamma_2) \,|\, \per(\beta_0)^d m^2 \,|\, m^{d+2} =
\per(\beta)^{d + 2}$.
\end{proof}

\begin{lem} \label{big patch per-ind lemma}
Suppose $\wh R$ is a $2$-dimensional excellent ring with fraction
field $F$, $t \in \wh R$,
and $\wh R$ is complete with respect to the $t$-adic topology. Suppose
that $\wh R/t\wh R \cong k[U]$ is the coordinate ring of a regular affine $k$-curve $U$ with $\cha(k) \!\not | \, m$, 
and that $\ind \alpha \,|\, (\per
\alpha)^d$ for all $\alpha \in {}_m\!\Br(k[U])$.  If $\beta \in {}_m\!\Br(F)$ is
ramified only at the support of
$t\wh R$, then $\ind \beta \,|\, (\per \beta)^{d + 1}$.
\end{lem}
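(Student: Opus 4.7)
The plan is to adapt the strategy of Lemma~\ref{small patch per-ind lemma} to this ``big patch'' setting by writing $\beta = \beta_0 + \gamma$, with $\beta_0$ coming from $\Br(\wh R)$ and $\gamma$ a single cyclic class of degree dividing $m$ that absorbs the ramification of $\beta$. First I would observe that $\wh R$ is itself regular: being Noetherian and $t$-adically complete, $t$ lies in its Jacobson radical, so the closed points of $\Spec \wh R$ are precisely those of $U = \Spec(\wh R/t\wh R)$, and at any such point $P$ the element $t$ together with a lift of a uniformizer of $U$ at $P$ forms a regular system of parameters in $\wh R_P$.

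Next I would construct the cyclic piece. Let $\eta$ be the generic point of $V(t\wh R)$ (reducing to the irreducible case component by component), and set $\chi := \partial_\eta(\beta) \in H^1(k(U), \mbb Z/m\mbb Z)$. Applying Saltman's decomposition \cite{Salt:Ramanujan} to the complete regular local ring $\wh R_P^\wedge$ at each closed point $P$ of $U$, the hypothesis that $\beta$ is ramified only along $V(t\wh R)$ forces the ``horizontal'' cyclic contribution to vanish, so $\chi$ is unramified at $P$. Running over all $P$ and using purity on the regular curve $U$ yields $\chi \in H^1(k[U], \mbb Z/m\mbb Z)$. Since $(\wh R, t\wh R)$ is a Henselian pair (as $\wh R$ is Noetherian and $t$-adically complete) and $m$ is invertible in $\wh R$, rigidity for \'etale cohomology lifts $\chi$ uniquely to $\til\chi \in H^1(\wh R, \mbb Z/m\mbb Z)$, and I set $\gamma := (\til\chi, t) \in {}_m\!\Br(F)$. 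Then $\ind(\gamma) \,|\, m = \per(\beta)$; the ramification of $\gamma$ at $\eta$ equals $\chi$ by construction; and $\gamma$ is unramified at every other codimension one point of $\Spec \wh R$ (where $t$ is a unit and $\til\chi$ is unramified).

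Thus $\beta - \gamma$ is unramified at every codimension one point of the regular scheme $\Spec \wh R$, so by purity for the Brauer group it is the image of some $\beta_0 \in {}_m\!\Br(\wh R) \subset {}_m\!\Br(F)$. The Henselian pair bijection between Azumaya algebras over $\wh R$ and over $k[U]$ identifies the Azumaya index of $\beta_0$ with that of its reduction $\bar\beta_0 \in {}_m\!\Br(k[U])$, which in turn equals the central simple algebra index over the respective fraction fields since $\wh R$ and $k[U]$ are regular. Hence $\ind(\beta_0) = \ind(\bar\beta_0) \,|\, \per(\bar\beta_0)^d \,|\, \per(\beta)^d$ by hypothesis, and $\ind(\beta) \,|\, \ind(\beta_0)\cdot\ind(\gamma) \,|\, \per(\beta)^{d+1}$. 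The main obstacle is globalizing Saltman's decomposition---showing the ramification character descends from $k(U)$ to $k[U]$---which uses essentially the hypothesis that the ramification locus of $\beta$ is exactly $V(t\wh R)$ and no other codimension one divisor; the subsequent Henselian lift and purity steps are then formal.
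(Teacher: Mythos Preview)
Your proof is essentially the same as the paper's, recast in cohomological language: both take the ramification of $\beta$ along $V(t)$, show it is unramified on $U$ by applying Saltman's local decomposition at each closed point, lift the resulting cyclic datum from $k[U]$ to $\wh R$ via the Henselian/complete pair, form a cyclic class with slot $t$, and subtract to land in $\Br(\wh R)$. The paper writes this with an explicit cyclic cover $V\to U$ and the algebra $(L/F,\hat\sigma,t)$, citing \cite{SGA4-2} for the lift; your residue character $\chi$, lift $\til\chi$, and symbol $(\til\chi,t)$ are the same objects in different notation.

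The one place where the arguments genuinely diverge is the index comparison $\ind_F(\beta_0)\mid\ind_{k(U)}(\bar\beta_0)$. The paper argues this concretely via the generalized Severi--Brauer variety $\SB_i(B)$: a $k(U)$-point extends to a $k[U]$-point because $U$ is a regular \emph{curve} (valuative criterion), and this then lifts to an $\wh R$-point by smoothness over the complete pair. Your appeal to a ``Henselian pair bijection between Azumaya algebras'' preserving index is correct but compresses precisely this: you still need that the minimal Azumaya degree over $k[U]$ equals $\ind_{k(U)}(\bar\beta_0)$, which uses that $k[U]$ is Dedekind (so a maximal order in the division algebra is Azumaya), before lifting. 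Finally, a small slip: you write $\ind(\gamma)\mid m=\per(\beta)$, but the hypothesis only gives $\per(\beta)\mid m$; what you actually have (and need) is that $\ind(\gamma)$ divides the order of $\chi$, which in turn divides $\per(\beta)$.
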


\begin{proof}
The ramification of $\beta$ defines a finite connected cover $V \to
U$ of curves of degree $n := \per(\beta)$, together with a generator $\sigma$ of its cyclic Galois group $C_n$. Applying
\cite[Theorem~1.1]{Salt:DivAlg} at each closed point of $U$, and using that
$\beta$ is ramified only at $t\wh R$, it follows that the cover $V \to
U$ is actually unramified and hence \'etale. 
By
\cite[VII.5.5]{SGA4-2}, specialization induces an equivalence of categories
between the \'etale covers of $U$ and of $\Spec(\wh R)$; and hence there is an \'etale algebra $S/\wh R$ that lifts $V \to U$ and
which is Galois with generator $\hat\sigma$ lifting $\sigma$. Let $L$ be the fraction field
of $S$, and consider the cyclic algebra $C = (L/F, \hat\sigma, t)$. By
\cite[Lemma 10.2]{Salt:LN},
$C$ and $\beta$ define the same cyclic cover $V \to U$ and the same Galois generator~$\sigma$.  Thus the Brauer class
$\beta - [C] \in \Br(F)$ is unramified over $\wh R$; i.e.\ it lies in $\Br(\wh R)$, and is represented by an Azumaya algebra $B$ over $\wh R$.  Now $\per([C]) \,|\, \ind([C]) \,|\, n= \per(\beta)$,
and so $\per(B) = \per(\beta - [C]) \,|\, \per(\beta)$.  

By reducing modulo $(t)$, the $\wh R$-algebra $B$ induces an Azumaya algebra $B_0$ over $k[U]$, hence in turn a class in the Brauer group of $k(U)$.  
We claim that $\ind(B)$ divides $i := \ind(B_0)$, over $F$ and $k(U)$ respectively.  To see this, 
consider the $i$-th generalized Severi-Brauer variety $\SB_i$ associated to $B$ over $\wh R$.
Its fiber $(\SB_i)_0$ modulo $(t)$ is the $i$-th generalized Severi-Brauer variety associated 
to $B_0$ over $k[U]$.  Since the index of $B_0$ over $k(U)$ is $i$, there is a $k(U)$-point on 
$(\SB_i)_0$, by the key property of generalized Severi-Brauer varieties (recalled in 
the proof of Theorem~\ref{Brauer} above).  Now $(\SB_i)_0$ is smooth and projective over 
$k[U]$, and $U$ is a regular curve; so the valuative criterion for properness 
implies that this $k(U)$-point extends to a $k[U]$-point, viz.\ a section of 
$(\SB_i)_0 \to U = \Spec(k[U]) = \Spec(\wh R/t\wh R)$.  By Lemma~4.5 of \cite{HHK} and the 
comment after that, this section over $U$ lifts to a section of $\SB_i \to \Spec(\wh R)$.  
The generic point of the image of this section is an $F$-point of $\SB_i$.  Thus $\SB_i(F)$ is 
non-empty, and so the index of $[B] \in \Br(F)$ divides $i$, proving the claim.

Now $\per(B_0) \, | \, \per(B)$, since $\Br(\wh R) \to \Br(k[U])$ is a group homomorphism 
taking $[B]$ to $[B_0]$.  Thus $\per(B_0) \, | \, \per(\beta) = n \, | \, m$.  
The above claim and the hypothesis on $\Br(k[U])$ then yield that
\(\ind(B) \, | \, \ind(B_0) \, | \, (\per\, B_0)^d \, | \, (\per\, B)^d\).
But $\beta = [B] + [C]$.  Hence we have that 
$\ind(\beta) \, | \, \ind(B) \ind(C) \, | \, 
(\per\, B)^d \per(\beta) \, | \, (\per\, \beta)^{d+1}$, as asserted.
\end{proof}

\begin{lem} \label{per-ind reduction lemma}
For a (general) field $L$ and an integer $n$, the following are
equivalent:
\begin{enumerate}
\item \label{reduction lemma per-ind general}
For every finite field extension $L'/L$, and $\alpha \in\, _n\!\Br(L')$, we have
$\ind(\alpha) | \per(\alpha)^d$.
\item  \label{reduction lemma per-ind special}
For every prime $q$ dividing $n$, every finite field extension $L'/L$
and every Brauer class $\alpha \in \Br(L')$ of period $q$, we have $\ind(\alpha) |
q^d$.
\end{enumerate}
Moreover if $\cha(L)$ does not divide $n$ then the same assertion holds with respect to the class of finite {\em separable} extensions $L'/L$ in conditions~\ref{reduction lemma per-ind general} and~\ref{reduction lemma per-ind special}.
\end{lem}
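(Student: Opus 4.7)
The plan is to handle $(1) \Rightarrow (2)$ trivially and $(2) \Rightarrow (1)$ by a two-stage reduction: first to prime-power period via primary decomposition, then to prime period by induction on the exponent.

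For $(1) \Rightarrow (2)$, I would simply observe that if $\alpha \in \Br(L')$ has period a prime $q \mid n$, then $\alpha \in {}_n\!\Br(L')$, so hypothesis~(1) gives $\ind(\alpha) \mid \per(\alpha)^d = q^d$. This direction requires no further work.

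For $(2) \Rightarrow (1)$, given $\alpha \in {}_n\!\Br(L')$ of period $m = \prod q_i^{e_i}$, I would use the primary decomposition $\alpha = \sum_i \alpha_i$ with $\per(\alpha_i) = q_i^{e_i}$. Since the $\ind(\alpha_i)$ are pairwise coprime prime powers, $\ind(\alpha) = \prod_i \ind(\alpha_i)$, so it is enough to prove $\ind(\alpha_i) \mid q_i^{e_i d}$ for each~$i$. I would then fix a prime $q \mid n$ and show by induction on $e$ that any class $\beta$ of period $q^e$ over any finite extension $L'/L$ satisfies $\ind(\beta) \mid q^{ed}$. The base case $e=1$ is exactly hypothesis~(2). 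For the inductive step, $q\beta$ has period $q^{e-1}$, so by induction it admits a splitting extension $L''/L'$ with $[L'':L'] \mid q^{(e-1)d}$; over $L''$ the class $\beta$ has period dividing $q$, so~(2) applied to the finite extension $L''/L$ gives $\ind(\beta_{L''}) \mid q^d$; and the elementary bound $\ind(\beta) \mid [L'':L'] \cdot \ind(\beta_{L''})$ then yields $\ind(\beta) \mid q^{ed}$, as desired.

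For the separable refinement, when $\cha(L) \nmid n$ the intermediate splitting fields $L''$ can be taken separable over $L'$, because in this range ${}_n\!\Br(L') = H^2(L',\mu_n)$ is computed in Galois cohomology and every $n$-torsion Brauer class admits a separable splitting field of degree equal to its index. Since separability is transitive, the whole inductive construction stays within finite separable extensions of $L$, so hypothesis~(2) is applied only to such extensions. The main point requiring care is precisely this separability of the inductively produced splitting fields; the period--index induction itself is entirely standard.
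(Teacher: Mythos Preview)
Your proof is correct and follows the same approach as the paper. The paper simply cites \cite[Lemma~1.1]{PS:PIu} for the reduction from prime-power period to prime period, whereas you spell out that induction explicitly; for the separable refinement the paper observes that all auxiliary extensions have $q$-power degree with $q \ne \cha(L)$ and are therefore automatically separable, while you instead invoke the existence of separable splitting fields---both justifications are valid and lead to the same conclusion.
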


\begin{proof}
The forward implication is trivial.
For (\ref{reduction lemma per-ind special}) $\Rightarrow$ (\ref{reduction lemma per-ind general}),
by considering primary parts we may assume that $\per(\alpha)$ is a prime power $q^r$.  
The implication is then given by \cite[Lemma~1.1]{PS:PIu}.  In the case that 
$\cha(L)$ does not divide $n$, the corresponding assertion for separable extensions $L'/L$ holds because the proof of \cite[Lemma~1.1]{PS:PIu} involves only extensions of $q$-power degree.
\end{proof}

Following \cite{Lieb} and \cite{HHK}, we define the {\em Brauer dimension} of
a field $k$ {\em away from} a prime $p$ as follows: The value is $0$ if the
absolute Galois group of $k$ is a pro-$p$ group (e.g.\ if $k$ is separably
closed).  Otherwise, it is the infimum of the positive integers $d$ such that
for every finite generated field extension $E/k$ of transcendence degree $i\le
1$, every $\alpha \in \Br(E)$ of period prime to $p$ satisfies 
 $\ind\,\alpha \,|\, (\per\,\alpha)^{d+i-1}$.
(We note that the term ``Brauer dimension'' was used in somewhat different
senses in the manuscripts \cite{ABGV} and \cite{PS:PIu}.) 
 
\begin{thm} \label{per-ind combination thm}
Let $T$ be a complete discrete valuation ring whose 
residue field $k$ has Brauer dimension $d$ away from $p := \cha(k)$. 
Let $\wh X$ be a normal projective $T$-curve with function field $F$ and closed fiber $X$.  
Let $\xi$ be either a closed point $P \in X$ or a connected Zariski open subset $W \subset X$, and  
let $E$ be a finite separable extension of $F_\xi$.
Then $\ind(\alpha)\, |\, \per(\alpha)^{d+1}$
for all $\alpha$ in $\Br(E)$ of period not divisible by~$p$.
\end{thm}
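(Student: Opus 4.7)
The plan is to reduce to prime period via Lemma~\ref{per-ind reduction lemma}, apply Corollary~\ref{extension blowup both cases} to resolve the ramification, invoke Corollary~\ref{per-ind max} to localize the period-index problem, and handle the two kinds of local factors with Lemmas~\ref{small patch per-ind lemma} and~\ref{big patch per-ind lemma}.

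First, every finite separable extension of $E$ is again a finite separable extension of $F_\xi$, so Lemma~\ref{per-ind reduction lemma} reduces the problem to the case that $\per(\alpha) = q$ for some prime $q \neq p$. Adjoining $\mu_q$ gives a separable extension of $E$ of degree prime to $q$, which does not alter the $q$-power index of $\alpha$; so we may assume $\mu_q \subset E$. Let $S$ be the integral closure of $\wh R_\xi$ in $E$, and let $D \subset \Spec(S)$ be the union of the ramification divisor of $\alpha$ with the zero locus of the uniformizer $t$.

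Applying Corollary~\ref{extension blowup both cases} to this $D$ produces a birational projective morphism $\pi : \wh V \to \Spec(S)$ with $\wh V$ normal, $D' = \pi^{-1}(D)$ a normal crossing divisor, and a finite set $\mc P$ of closed points of $V := \pi^{-1}(\til\xi)$ containing all points at which $V \cup D'$ is not regular. Using resolution of singularities for excellent surfaces, I would enlarge $\mc P$ (via the remark following the corollary) so that $\wh V$ is regular at every point of $\mc P$, and further enlarge $\mc P$ to include every closed point of $V$ at which some component of $D'$ not contained in $V$ meets $V$. After these adjustments, for every $U \in \mc W$ the only component of $D'$ meeting $\wh R_U$ is the special fiber, so $\alpha$ is ramified over $\wh R_U$ only along $(t)$. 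By Corollary~\ref{per-ind max}, it suffices to show $\ind(\alpha_\xi) \mid q^{d+1}$ for every $\xi \in \mc P \cup \mc W$.

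For $U \in \mc W$, the ring $\wh R_U$ is $t$-adically complete, and $\wh R_U/t\wh R_U$ is the coordinate ring of the regular affine curve $U$ over a finite extension $k'$ of $k$; since $k'(U)$ has transcendence degree one over $k$, the Brauer dimension hypothesis supplies the input exponent $d$ to Lemma~\ref{big patch per-ind lemma}, giving $\ind(\alpha_U) \mid q^{d+1}$. For $Q \in \mc P$, the ring $\wh R_Q$ is a two-dimensional excellent regular complete local ring whose residue field $\kappa(Q)$ is a finite extension of $k$, and $\mu_q \subset \kappa(Q)$ by Hensel's lemma since $\mu_q \subset S \subset \wh R_Q^\times$. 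The transcendence-degree-zero instance of the Brauer dimension hypothesis supplies input exponent $d-1$ to Lemma~\ref{small patch per-ind lemma}, yielding $\ind(\alpha_Q) \mid q^{(d-1)+2} = q^{d+1}$.

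The principal technical difficulty is coordinating the blowup data so that all hypotheses are met simultaneously: $D'$ must have normal crossings, $\wh V$ must be regular at every point of $\mc P$, and the transverse components of $D'$ must avoid the open patches in $\mc W$. The key quantitative feature is that the exponent increase of two in Lemma~\ref{small patch per-ind lemma}, combined with the one-unit drop in the Brauer dimension exponent on passing from transcendence degree one to zero, matches the exponent increase of one in Lemma~\ref{big patch per-ind lemma}, producing the uniform exponent $d+1$.
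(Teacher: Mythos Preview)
Your approach is essentially the paper's: reduce to prime period $q$ via Lemma~\ref{per-ind reduction lemma}, adjoin $\mu_q$, apply Corollary~\ref{extension blowup both cases} to the ramification divisor, and then combine Lemmas~\ref{small patch per-ind lemma} and~\ref{big patch per-ind lemma} via the lcm formula (the paper cites Example~\ref{lgp Brauer} rather than Corollary~\ref{per-ind max}, but these are the same content).

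Two small points. First, the sentence ``enlarge $\mc P$ \dots\ so that $\wh V$ is regular at every point of $\mc P$'' is muddled: enlarging $\mc P$ adds points, it cannot improve the regularity of $\wh V$ at them. What you actually need is that the construction in Corollary~\ref{extension blowup both cases} already produces a $\wh V$ that is regular along $V$ (this is implicit in the normal-crossing condition on $D'$, and the paper simply invokes Lemma~\ref{small patch per-ind lemma} without further comment). Your inclusion of the locus $(t=0)$ in $D$ is in fact a clean way to force this, since then $V\subseteq D'$ and the normal-crossing hypothesis guarantees regularity of $\wh V$ at every point of $V$; but the fix is the choice of $D$, not a manipulation of $\mc P$. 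Second, you should separate out the degenerate case in which $\pi$ is an isomorphism: there $\mc W$ may be empty, condition~(iv) of Corollary~\ref{extension blowup both cases} does not apply, and you must invoke Lemma~\ref{small patch per-ind lemma} or~\ref{big patch per-ind lemma} directly on $\Spec(S)$, as the paper does.
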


\begin{proof}
By hypothesis, $\per(\alpha)$ is not divisible by $\cha(k)$ and hence also not by the characteristic of $K$, the fraction field of $T$.  
By Lemma~\ref{per-ind reduction lemma}, we may assume that $\per(\alpha)$ is a prime number $q \ne \cha(K), \cha(k)$.   Since the degree $[K(\zeta_q):K]$ is prime to $q$, we may also assume that $K,k$ each contain $\mu_q$.  Namely, let $K'=K(\zeta_q)$, $k'=k(\zeta_q)$, and $E':=E(\zeta_q)$.  Then  the period of the induced element
$\alpha' \in \Br(E')$ is equal to the period of $\alpha \in \Br(E)$,
by~\cite[Proposition~14.4.b(v)]{Pie}.  
Also, the index of $\alpha$ (which is a
power of $q$) divides $[E':E]\ind(\alpha')$ by~\cite[Proposition~13.4(v)]{Pie}; and hence it divides the index of $\alpha'$ since $[E':E]$ is relatively prime to $q$.
So we may henceforth assume that $K,k$ each contain $\mu_q$.
  
In the case that $\xi=W$, we may assume that $W$ is affine, since $F_W=F_{W'}$ for some connected affine open set $W'$ on a normal model of $T$ (see Remark~\ref{RW remark}(\ref{contraction remark})).
In both cases $\xi=P,W$, let
$S$ be the integral closure of $\wh R_\xi$ in $E$, let $D$ be the ramification divisor of $\alpha$ on $\Spec(S)$, 
and let $\til \xi$ be the inverse image of $\xi$ under $\Spec(S) 
\to \Spec(\wh R_\xi)$.
Applying Corollary~\ref{extension blowup both cases},
we obtain a birational projective morphism $\pi:\wh V \to \Spec(S)$ and a non-empty set $\mc P$ of closed points of $V := \pi^{-1}(\til \xi)$ satisfying the four conditions there.  

Observe that the assertion holds in the special case that $\pi$ is an isomorphism. 
Namely, if $\xi=P$, 
then 
$\ind\alpha | (\per \alpha)^{d+1}$ for all $\alpha \in {}_q\!\Br(E)$ by Lemma \ref{small patch per-ind lemma}, since $\ind \gamma | (\per \gamma)^{d-1}$ 
for all $\gamma \in {}_q\!\Br(k_{\til P})$ by the assumption on the Brauer dimension of $k$.  
(Here $k_{\til P}$ is the residue field at $\til P$.)
Similarly, if $\xi=W$, then
$\ind\alpha | (\per \alpha)^{d+1}$ for $\alpha \in {}_q\!\Br(E)$ 
by Lemma~\ref{big patch per-ind lemma}.
  
So we may now assume that $\pi$ is not an isomorphism, and therefore that the patching assertion in the last part of Corollary~\ref{extension blowup both cases} holds.  Let $D', \mc W, \mc B$ be as in that result.  
By properties~(\ref{ncd}) and~(\ref{crossing points}) of 
Corollary~\ref{extension blowup both cases}, 
$\alpha_U \in \Br(F_U)$ is unramified away from the closed fiber $U \subset \Spec(\wh R_U)$ for each $U \in \mc W$.

Now $\ind\alpha_Q | (\per \alpha_Q)^{d+1}$ in $\Br(F_Q)$ for each $Q  \in \mc P$ by Lemma \ref{small patch per-ind lemma}, 
and therefore $\ind\alpha_Q | (\per \alpha)^{d+1}$, since $\per \alpha_Q | \per \alpha$.  
Similarly, $\ind\alpha_U | (\per \alpha)^{d+1}$ in $\Br(F_U)$ for all $U  \in \mc W$
by Lemma~\ref{big patch per-ind lemma}.  
But the index of $\alpha \in \Br(E)$ is the least common multiple of the indices 
of all the induced elements $\alpha_Q, \alpha_U$, for $Q \in \mc P$ and $U \in \mc W$, by 
Example~\ref{lgp Brauer} in the context of
Corollary~\ref{extension blowup both cases}.
So the desired conclusion follows.
\end{proof}

Above, we restricted attention to elements of the Brauer group whose period is prime to the residue characteristic $p$.  But in \cite{PS:PIu}, a result was shown about elements whose period is equal to $p$.  Namely, 
suppose that $\cha(K)=0$ and $\cha(k)=p>0$.  If
$F$ is a finitely generated $K$-algebra of transcendence degree one, and if $\alpha \in \Br(F)$ has period $p$, then $\ind(\alpha)$ divides $p^{2n+2}$, where $n$ is the $p$-rank of the residue field $k$ of $T$.  (See \cite[Theorem~3.6]{PS:PIu}.  Recall that the \textit{$p$-rank}, or \textit{imperfect exponent}, of a field $k$ of characteristic $p$ is the integer $n$ such that $[k:k^p]=p^n$.)   
Combining the ideas there with the ideas above, we obtain the following:

\begin{thm} \label{per-ind comb mixed char}
In the situation of Theorem~\ref{per-ind combination thm}, 
suppose that $T$ has mixed characteristic $(0,p)$.  
If the period of $\alpha \in \Br(E)$ is a power of $p$, then $\ind(\alpha)$ divides $(\per\,\alpha)^{2n+2}$, where $n$ is the $p$-rank of $k$.
\end{thm}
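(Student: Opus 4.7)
The plan is to mirror the proof of Theorem~\ref{per-ind combination thm}, replacing its prime-to-$p$ local period-index lemmas with the $p$-primary mixed-characteristic bounds of Parimala--Suresh \cite{PS:PIu}. Since $T$ has characteristic zero, so does $E$, and hence $\cha(E)$ does not divide $p$; applying Lemma~\ref{per-ind reduction lemma} (its ``moreover'' clause) to the $p$-power integer $\per(\alpha)$, we reduce to showing that $\ind(\beta) \,|\, p^{2n+2}$ for every finite separable extension $E'/E$ and every $\beta \in \Br(E')$ of period exactly $p$. Since any such $E'$ is itself a finite separable extension of $F_\xi$, it suffices to prove the bound assuming $\per(\alpha) = p$.

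Following the proof of Theorem~\ref{per-ind combination thm}, let $t$ be a uniformizer of $T$, let $S$ be the integral closure of $\wh R_\xi$ in $E$, and apply Corollary~\ref{extension blowup both cases} with $D$ equal to the union of the ramification divisor of $\alpha$ on $\Spec(S)$ and the divisor of $t$. This produces a regular $\pi: \wh V \to \Spec(S)$ and a finite set $\mc P$ of closed points of $V := \pi^{-1}(\til\xi)$ such that $D' := \pi^{-1}(D)$ has only normal crossings away from $\mc P$. If $\pi$ is an isomorphism the analysis collapses to a single local case; otherwise the patching assertion of Corollary~\ref{extension blowup both cases} holds, so by Example~\ref{lgp Brauer} in that context $\ind(\alpha)$ is the least common multiple of the $\ind(\alpha_Q)$ for $Q \in \mc P$ and the $\ind(\alpha_U)$ for $U \in \mc W$. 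It therefore suffices to bound each of these local indices by $p^{2n+2}$.

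For a closed point $Q \in \mc P$, the complete local ring $\wh R_{\wh V, Q}$ is a two-dimensional complete regular local ring of mixed characteristic $(0,p)$, and its residue field is a finite extension of $k$ and so still has $p$-rank $n$ (since $[L:L^p]=[K:K^p]$ for any finite extension $L/K$ in characteristic $p$). By the choice of $D$, the ramification locus of $\alpha_Q$ together with the divisor of $t$ forms (part of) a normal crossings divisor at $Q$. The $p$-primary mixed-characteristic analog of Lemma~\ref{small patch per-ind lemma}---proved via Kato's differential description of ${}_p\!\Br$ and Saltman-style cyclic decompositions, along the lines of \cite[Sections 2--3]{PS:PIu}---then yields $\ind(\alpha_Q) \,|\, p^{2n+2}$. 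For $U \in \mc W$, the ring $\wh R_U$ is a $t$-adically complete two-dimensional excellent ring whose reduction mod $t$ is (up to nilpotents) the coordinate ring of a regular affine $k$-curve, and $\alpha_U$ is unramified away from the support of $t$; the parallel $p$-primary analog of Lemma~\ref{big patch per-ind lemma}, in the spirit of the curve-level argument in \cite[Section 3]{PS:PIu}, gives $\ind(\alpha_U) \,|\, p^{2n+2}$. Taking the $\lcm$ of these two bounds completes the proof.

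The main obstacle is establishing the two local bounds in the third paragraph. The prime-to-$p$ proofs of Lemmas~\ref{small patch per-ind lemma} and~\ref{big patch per-ind lemma} rely on the specialization isomorphism $\Br(\wh R) \simeq \Br(k)$ and on Saltman's cyclic decomposition of ramified classes, both of which degenerate for wild ramification at $p$. Their replacements require Kato's differential analysis of $H^2(F,\mu_p)$ together with a careful tracking of the residue-field $p$-rank through normal-crossings blow-ups, and it is this bookkeeping that produces the exponent $2n+2$ rather than the considerably smaller exponent visible in the tame case.
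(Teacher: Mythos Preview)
Your overall architecture---reduce to $\per(\alpha)=p$ via Lemma~\ref{per-ind reduction lemma}, blow up via Corollary~\ref{extension blowup both cases}, then bound each $\ind(\alpha_Q)$ and $\ind(\alpha_U)$ and take the $\lcm$ via Example~\ref{lgp Brauer}---matches the paper exactly. At closed points $Q$ you are also on target: the paper simply invokes \cite[Proposition~3.5]{PS:PIu}, which is the precise statement you are gesturing at.

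The gap is in your treatment of the open sets $U\in\mc W$. You propose to prove a $p$-primary analog of Lemma~\ref{big patch per-ind lemma}, and you correctly identify this as ``the main obstacle,'' but you do not actually carry it out. The paper \emph{avoids} this obstacle entirely by a generic-point-then-shrink maneuver: for $U\in\mc W$ with generic point $\eta$, the field $F_\eta$ is complete discretely valued with residue field of $p$-rank $n+1$ (a one-variable function field over a finite extension of $k$), so \cite[Theorem~2.4]{PS:PIu} gives $\ind(\alpha_{F_\eta})\mid p^{2(n+1)}=p^{2n+2}$; then by \cite[Proposition~5.8]{HHK:H1} together with \cite[Proposition~1.17]{BofInv}, one may shrink $U$ (correspondingly enlarging $\mc P$) so that $\ind(\alpha_{F_U})=\ind(\alpha_{F_\eta})$. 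This sidesteps the wild-ramification bookkeeping that a genuine analog of Lemma~\ref{big patch per-ind lemma} would require, and it is the idea your proposal is missing. Once you know this trick, the ``parallel $p$-primary analog'' you sought becomes unnecessary.
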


\begin{proof}  \textit{Case I}: $\per(\alpha) = p$.

As in the proof of Theorem~\ref{per-ind combination thm}, we may assume that the fraction field $K$ (though not the residue field $k$) contains a primitive $p$-th root of unity.  As in the proof of Theorem~\ref{per-ind combination thm}, we obtain a birational projective morphism $\pi:\wh V \to \Spec(S)$ and an associated non-empty finite set $\mc P \subset V$.  

In the case that $\pi$ is not an isomorphism, we proceed as in the proof of Theorem~\ref{per-ind combination thm} but use \cite[Proposition~3.5 and Theorem~2.4]{PS:PIu} instead of using Lemmas~\ref{small patch per-ind lemma} and~\ref{big patch per-ind lemma}.  Namely, as before we obtain a set $\mc W$ consisting of the components of $V \smallsetminus \mc P$.  If
$U \in \mc W$, then $U$ is irreducible and we may consider its generic point $\eta$.  The $p$-rank of the field $F_\eta$ is $n+1$.  Applying \cite[Theorem~2.4]{PS:PIu} to $F_\eta$, we thus obtain that $\ind(\alpha_{F_\eta})$ divides $p^{2n+2}$.  By \cite[Proposition~5.8]{HHK:H1} and \cite[Proposition~1.17]{BofInv}, after shrinking $U$ (and correspondingly enlarging $\mc P$), we have that $\ind(\alpha_{F_U})=\ind(\alpha_{F_\eta})$, which divides $p^{2n+2}$.  Meanwhile, by \cite[Proposition~3.5]{PS:PIu}, $\ind(\alpha_{F_P})$ divides $p^{2n+2}$ for every $P \in \mc P$.  As in the proof of Theorem~\ref{per-ind combination thm}, we conclude via Example~\ref{lgp Brauer}.

In the case that $\pi$ is an isomorphism, we similarly use those two results in \cite{PS:PIu} instead of Lemmas~\ref{small patch per-ind lemma} and~\ref{big patch per-ind lemma}.  In the case that $\xi=W$, we 
apply \cite[Theorem~2.4]{PS:PIu} at each generic point of $W$, and as above obtain a finite collection of points and open sets that partition $W$.  
We then conclude via Example~\ref{lgp Brauer} as in the above case.

\textit{Case II}: General case.

Let $E'$ be a finite extension of $E$.  Recall that the $p$-rank of $E'$ is also equal to $n$, i.e.\ $[E':E'^p] = [E:E^p]$, because $[E':E] = [E'^p:E^p]$ via the Frobenius isomorphism.  Thus Case~I applies to elements of $\Br(E')$ having period $p$.  The result now follows from Lemma~\ref{per-ind reduction lemma} applied to the field $E$ and the integer $\per(\alpha)$.
\end{proof}

\begin{thm} \label{combined thm csa}
Let $E$ be one of the following:
\begin{compactenum}
\renewcommand{\theenumi}{(\roman{enumi})}
\renewcommand{\labelenumi}{(\roman{enumi})}
\item \label{combined csa small patch}
the fraction field of a two-dimensional Noetherian complete local domain $R$; or
\item \label{combined csa big patch}
a finite separable extension of the fraction field of the $t$-adic completion of $T[x]$, where $T$ is a complete discrete valuation ring with uniformizer $t$.
\end{compactenum}
Assume that the residue field $k$ of $R$ (resp.\ $T$) has 
Brauer dimension $d$ away from $p:=\cha(k)$, and has $p$-rank $n$.  Let $\alpha \in \Br(E)$.
Then $\ind(\alpha)\, |\, \per(\alpha)^{d+1}$ if $p$ does not divide $\per(\alpha)$; and  
$\ind(\alpha)\, |\, \per(\alpha)^{\max(d+1,2n+2)}$ with no restriction on $\per(\alpha)$ if $\cha(E)=0$.
\end{thm}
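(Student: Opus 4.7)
The plan is to reduce both cases to Theorem~\ref{per-ind combination thm} and Theorem~\ref{per-ind comb mixed char}, which together provide period-index bounds for finite separable extensions of fields $F_\xi$ on a normal projective curve over a complete DVR with residue field $k$.

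First I would express $E$ as a finite separable extension of some $F_\xi$. In case~\ref{combined csa small patch}, this is exactly Lemma~\ref{Cohen lemma}: $E$ becomes a finite separable extension of some $F_P$ coming from a normal projective $T$-curve, where $T$ is a complete DVR with residue field $k$. In case~\ref{combined csa big patch}, take $\wh X = \mbb P^1_T$ with closed fiber $X = \mbb P^1_k$ and $U = \mbb A^1_k$; the fraction field of the $t$-adic completion of $T[x]$ is then $F_U$, so $E$ is a finite separable extension of $F_U$. In both situations, if $\cha(E)=0$ and $p > 0$ then $T$ automatically has mixed characteristic $(0,p)$, since $T$ has residue field $k$ and is contained in~$E$.

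With $E$ so realized, Theorem~\ref{per-ind combination thm} gives $\ind(\alpha) \mid \per(\alpha)^{d+1}$ for every $\alpha \in \Br(E)$ of period prime to $p$, which is the first assertion. For the second, suppose $\cha(E) = 0$; the case $p = 0$ is trivial, so assume $p > 0$. Decompose $\alpha = \alpha_p + \alpha_{p'}$ into its $p$-primary and prime-to-$p$ components; coprimality of their periods yields $\per(\alpha) = \per(\alpha_p)\per(\alpha_{p'})$ and $\ind(\alpha) = \ind(\alpha_p)\ind(\alpha_{p'})$. Theorem~\ref{per-ind combination thm} bounds the prime-to-$p$ part by $\per(\alpha_{p'})^{d+1}$, while Theorem~\ref{per-ind comb mixed char} (applicable because $T$ is mixed characteristic $(0,p)$) bounds the $p$-primary part by $\per(\alpha_p)^{2n+2}$. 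Multiplying and using that coprime integers remain coprime in all powers gives $\ind(\alpha) \mid \per(\alpha)^{\max(d+1,\,2n+2)}$.

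The main obstacle is really the realization step in case~\ref{combined csa small patch}: one must present an arbitrary two-dimensional Noetherian complete local domain $R$ as a finite separable extension of a power series ring $T[[x]]$ coming from a normal projective $T$-curve. This requires the Cohen structure theorem together with a separability-of-Noether-normalization result in positive characteristic, both of which are packaged into Lemma~\ref{Cohen lemma}. Once that step is done, the remainder of the argument is a clean citation of the two earlier theorems together with elementary bookkeeping for the primary decomposition of Brauer classes.
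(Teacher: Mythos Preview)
Your proposal is correct and follows essentially the same route as the paper: realize $E$ as a finite separable extension of some $F_\xi$ via Lemma~\ref{Cohen lemma} in case~\ref{combined csa small patch} and via $U=\mbb A^1_k\subset\mbb P^1_T$ in case~\ref{combined csa big patch}, then invoke Theorems~\ref{per-ind combination thm} and~\ref{per-ind comb mixed char} and finish by primary decomposition. Your treatment is slightly more explicit than the paper's about the primary decomposition bookkeeping and the trivial $p=0$ subcase, but the argument is the same.
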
 

\begin{proof}
In case~\ref{combined csa small patch}, Lemma~\ref{Cohen lemma} says that $E$ is a 
finite separable extension of a field of the form $F_P$.
In case~\ref{combined csa big patch}, $E$ is a finite separable extension of $F_U$, where $U =\mbb A^1_k \subset \mbb P^1_T$.  So in both cases, $E$ is of 
the form considered in 
Theorems~\ref{per-ind combination thm} and~\ref{per-ind comb mixed char}.

Those two theorems therefore yield this result if the period of $\alpha$ is either prime to $p$ or a power of $p$.  The general case follows by decomposing $\alpha$ into its primary parts.
\end{proof}

For example, taking $T=k[[t]]$, this theorem applies to finite separable extensions of the fraction field of $k[x][[t]]$, as well as of $k((x,t))$.  This strengthens \cite[Corollary~4.7]{HHK:Weier}.

\begin{cor}
In the situation of Theorem~\ref{combined thm csa}, suppose that $k$ is finite (resp.\ algebraically closed).  If $\cha(E)$ does not divide $\per(\alpha)$
then $\ind(\alpha)$ divides $\per(\alpha)^2$.  Moreover 
$\ind(\alpha)=\per(\alpha)$ in the algebraically closed case if $\cha(k)$ does not divide $\per(\alpha)$.
\end{cor}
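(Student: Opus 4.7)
The plan is to reduce the corollary to a direct application of Theorem~\ref{combined thm csa} by computing, in each of the two cases, the Brauer dimension $d$ of $k$ away from $p := \cha(k)$ and the $p$-rank $n$ of $k$.

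First I would handle the case of $k$ finite. Since finite fields are perfect, $k = k^p$, so $n = 0$. I would then argue that $d = 1$ suffices: for finite extensions of $k$ one needs $\ind(\alpha) \mid 1$, which holds by Wedderburn's theorem since the Brauer group of a finite field is trivial; for finitely generated extensions of transcendence degree one over $k$, one needs $\per(\alpha) = \ind(\alpha)$, the classical period-equals-index statement for Brauer groups of global function fields (a consequence of Hasse--Brauer--Noether). With $d = 1$ and $n = 0$, Theorem~\ref{combined thm csa} produces $\ind(\alpha) \mid \per(\alpha)^2$ in both clauses. The hypothesis $\cha(E) \nmid \per(\alpha)$ guarantees that at least one clause applies: if $\cha(E) = 0$, the mixed-characteristic clause does; if $\cha(E) = p > 0$, then $\cha(E) = \cha(k)$ and $p \nmid \per(\alpha)$, so the prime-to-$p$ clause does.

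Next I would treat the algebraically closed case. The absolute Galois group of $k$ is trivial, hence pro-$\ell$ for every prime $\ell$, so by the definition of Brauer dimension given in the paper, $d = 0$ away from $p$; again $n = 0$. Plugging into Theorem~\ref{combined thm csa}, the prime-to-$\cha(k)$ clause gives $\ind(\alpha) \mid \per(\alpha)^{d+1} = \per(\alpha)$, which (combined with $\per(\alpha) \mid \ind(\alpha)$) yields the equality $\ind(\alpha) = \per(\alpha)$ stated in the final sentence of the corollary. To obtain $\ind(\alpha) \mid \per(\alpha)^2$ in the remaining situation where $\cha(E) \nmid \per(\alpha)$ but $\cha(k) \mid \per(\alpha)$, note that this forces $\cha(E) = 0$; then the mixed-characteristic clause of the theorem gives $\ind(\alpha) \mid \per(\alpha)^{\max(d+1,\,2n+2)} = \per(\alpha)^2$.

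The only substantive input, and thus the main citation to pin down carefully, is the period-equals-index statement for Brauer groups over one-variable function fields over a finite field, which feeds the computation of $d$ in the finite residue field case. Everything else is bookkeeping with the invariants $d$ and $n$ and a case split according to whether $\cha(E)$ is zero or positive.
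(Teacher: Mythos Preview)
Your proposal is correct and follows essentially the same approach as the paper: compute $d$ and $n$ in each case and apply Theorem~\ref{combined thm csa}. The paper's proof is terser, citing Wedderburn's theorem together with \cite[Theorem~32.19]{Rei} for $d=1$ in the finite case, and noting $d=0$, $n=0$ in the algebraically closed case; your more explicit case split according to $\cha(E)$ and $\cha(k)$ is a fine elaboration of the same argument.
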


\begin{proof}
If $k$ is algebraically closed, then $d=0$ and $n=0$ in the notation of Theorem~\ref{combined thm csa}.  Since the period always divides the index, the assertion in this case follows from the theorem.

If $k$ is finite, then $d=1$ by Wedderburn's Theorem and 
\cite[Theorem~32.19]{Rei}; and $n=0$ since $k$ is perfect.  So again the result follows from the theorem.
\end{proof}

In particular, in the $p$-adic case this yields Corollary~\ref{intro cor csa} and the assertion after it.  See also \cite[Theorem~3.4]{HuDiv} for a related result in the local case.


\noindent {\bf Author Information:}

\medskip

\noindent David Harbater\\
Department of Mathematics, University of Pennsylvania, Philadelphia, PA 19104-6395, USA\\
email: harbater@math.upenn.edu
\medskip

\noindent Julia Hartmann\\
Department of Mathematics, University of Pennsylvania, Philadelphia, PA 19104-6395, USA\\
email: hartmann@math.upenn.edu

\medskip

\noindent Daniel Krashen\\
Department of Mathematics, University of Georgia, Athens, GA 30602, USA\\
email: dkrashen@math.uga.edu

\medskip

\noindent The first author was supported in part by NSF grants DMS-0901164 and DMS-1265290, and NSA grant H98230-14-1-0145. 
The second author was supported by the German Excellence Initiative via RWTH Aachen University and by the German National Science Foundation (DFG).
The third author was supported in part by NSF grant DMS-1007462 and NSF CAREER Grant DMS-1151252.

\end{document}